\newcommand\xleftrightarrow[2][]{%
	\ext@arrow 9999{\longleftrightarrowfill@}{#1}{#2}}
\newcommand\longleftrightarrowfill@{%
	\arrowfill@\leftarrow\relbar\rightarrow}
\newtheorem{proposition}{Proposition}[section]
\newtheorem{theorem}[proposition]{Theorem}
\newtheorem{corollary}[proposition]{Corollary}
\newtheorem{lemma}[proposition]{Lemma}
\newtheorem{remark}[proposition]{Remark}
\newtheorem{example}[proposition]{Example}
\newcommand{\nc}{\newcommand}
\nc{\I}{{\mathbf 1}}
\nc{\bN}{{\mathbf N}}
\nc{\bM}{{\mathbf M}}
\nc{\cB}{{\mathcal B}}
\nc{\cM}{{\mathcal M}}
\nc{\R}{{\mathbb R}}
\nc{\N}{{\mathbb N}}
\nc{\Z}{{\mathbb Z}}
\nc{\BX}{{\mathbb X}}
\nc{\BY}{{\mathbb Y}}
\nc{\cX}{{\mathcal X}}
\nc{\cY}{{\mathcal Y}}
\nc{\cN}{{\mathcal N}}
\nc{\cF}{{\mathcal F}}
\nc{\tv}{\mathbf{d_{\mathrm {TV}}}}
\nc{\kr}{\mathbf{d_{\mathrm {KR}}}}
\nc{\BP}{\mathbb{P}}
\nc{\BE}{\mathbb{E}}
\nc{\BQ}{\mathbb{Q}}
\numberwithin{equation}{section}
\begin{document} 

\renewcommand{\thefootnote}{\fnsymbol{footnote}}
\author{{\sc G\"unter Last\footnotemark[1]}\, and {\sc Moritz Otto\footnotemark[2]}}
\footnotetext[1]{guenter.last@kit.edu,  
Karlsruhe Institute of Technology, Institute of Stochastics,
76131 Karlsruhe, Germany. }
\footnotetext[2]{otto@math.au.dk, 
Aarhus University, Department of Mathematics, Ny Munkegade 118, 8000 Aarhus C, Denmark. }

\title{Disagreement coupling of  Gibbs processes\\
with an application to Poisson approximation} 
\date{\today}
\maketitle

\begin{abstract} 
\noindent 
We discuss a thinning and an embedding procedure to
construct finite Gibbs processes with a given
Papangelou intensity. Extending the approach in \cite{HT19,HTHou17} 
we will use this to couple two finite Gibbs processes with different
boundary conditions. As one application we will establish
Poisson approximation of point processes derived
from certain infinite volume Gibbs processes via dependent thinning.
As another application we shall discuss empty space probabilities
of certain Gibbs processes.

\end{abstract}

\noindent
{\bf Keywords:} Gibbs process, disagreement coupling, Poisson approximation,
Papangelou intensity, Poisson thinning, Poisson embedding, empty space probabilities

\vspace{0.1cm}
\noindent
{\bf AMS MSC 2010:} 60G55, 60D05, 60K35

\section{Introduction}\label{sintro}

In the seminal paper \cite{BergMaes94} the authors 
introduced {\em disagreement percolation} for discrete
Markov random fields to control boundary effects and 
to establish new uniqueness criteria for Gibbs measures on graphs. 
The idea is to locally couple two fields with different
boundary conditions and to control the disagreement
with a stochastically dominating percolation process. 
In \cite{HT19,HTHou17} the method was developed for Gibbs processes
in continuum. The first aim of this paper is to establish
{\em disagreement coupling} in great generality  using modern
point process theory. This way we shall also close a gap
left in \cite{HT19,HTHou17}. The second main aim is to combine this
coupling with a recent result from \cite{BSY21}
(generalizing a classical result from \cite{BB92})
to obtain Poisson approximation of point processes 
derived of Gibbsian functionals.
We shall also discuss empty space probabilities of Gibbs processes.

Gibbs processes form an important class of point processes.
In (mathematical) physics they describe the thermodynamical behavior 
of interacting particles; see \cite{Ruelle70}. As a mathematical model they are much more
versatile than the completely independent Poisson process
(see e.g.\ \cite{Dereudre18}). They are also quite popular
in spatial statistics; see e.g.\ \cite{MoeWaa07,CSKM13}. 
Getting a handle on distributional properties of Gibbs processes is not easy.
However, starting with the seminal paper \cite{SchrYuk13}, the last decade
has seen some efforts to understand some asymptotic
properties of functionals of Gibbs processes in infinite volume, at least
if the process is in a certain sense
close to a Poisson process. Our paper aims at adding
to this development.

Let us shortly summarize the structure and main results of the paper.
Section \ref{secGibbs} collects some basic facts on Gibbs processes based on
the classical point process approach from \cite{NgZe79} and
\cite{MaWaMe79}. The state space is assumed to be Borel but is
otherwise not required to have any topological properties.  The main
result of Section \ref{secthinning} is the thinning representation
from Theorem \ref{t2.1} which extends a result in
\cite{HTHou17} to general Borel spaces. Our proof is based on Lebesgue--Stieltjes calculus and
is different from the one given in \cite{HTHou17}.  In
Section \ref{secembedding} we prove with Theorem \ref{tembedding} a
version of the thinning representation based on embedding into a
Poisson process on a suitable product space.  This result is quite
convenient for coupling purposes and should be compared with the
classical Poisson embedding of marked point processes based on
stochastic intensities; see \cite{BreMass96}.  This embedding does not
require any boundedness assumptions on the Papangelou intensity.  

In Section \ref{secdisagreement} we formulate and study the disagreement
coupling of two (local) Gibbs processes with the same Papangelou
intensity but different boundary conditions. 
To this end we consider a
symmetric and measurable relation on the state space equipping each
point configuration with a graph structure.  At each point of the
state space the Papangelou intensity is then assumed to depend only on
the cluster connected to this point.
Theorem \ref{tdis} generalizes Theorem 3.1 in
\cite{HTHou17} and shows that the points of disagreement of two
recursively defined point processes are connected to the boundary
conditions. The main part of the proof (which seems to be missing in
\cite{HTHou17}) is devoted to checking that the 
coupled processes have the desired Gibbs distribution. 
Our main tool here is a basically well-known spatial Markov
property of the Poisson process. We work here
with a rather general definition of a stopping
set given in the Appendix of \cite{LPY2021}.
We have chosen the terminology ``disagreement coupling''
as opposed to ``disagreement percolation'' because this coupling
might be potentially useful also beyond a percolation setting.
In fact, Theorem \ref{tdis} does not require the
absence of percolation in the dominating Poisson process.
But indeed, so far all current applications of disagreement coupling
require the absence of percolation. The main example
are proofs of uniqueness of certain Gibbs distributions; 
see \cite{HT19,BergMaes94,HTHou17}. 
A very recent result in this area
is \cite{BetschLast22}, a paper that makes crucial use of 
the general setting in Theorem \ref{tdis}.
In \cite{BHLV20} disagreement coupling was used to
establish exponential decorrelation of certain Gibbs processes.  

In Section \ref{sempty} we discuss empty space probabilities
of Gibbs processes. In particular we show that a large class
of Gibbs processes are Poisson-like as defined in \cite{SchrYuk13}.
This property is needed in the proof of Theorem \ref{hardtpo}
but we believe that it is of importance in its own right.

Section \ref{secPalm} contains some (basically well-known) material on Palm distributions of
a Gibbs process which is needed later. In Theorem \ref{tpoapprox} we
prove a bound on the total variation distance between an appropriately
scaled thinning of a marked Gibbs process in $\mathbb{R}^d$ with
bounded Papangelou intensity and a Poisson process. Our proof adapts a
coupling technique for Poisson approximation from
\cite[Theorem 3.3]{Otto20} to Gibbs processes. 
In doing so it exploits the
disagreement coupling studied in Section \ref{secdisagreement} for a
Gibbs process and its Palm version.
Similarly as in \cite{BergMaes94,HTHou17} we assume that
a random graph defined on the points of a dominating Poisson
process does not percolate. In fact we need an exponentially
small cluster size; see \eqref{csharp}.
This condition appears to  be weaker than subcriticality
of an associated spatial branching type process, as assumed
in \cite{SchrYuk13}; see \cite{BHLV20} for a short discussion.
Our general bound involves expectations with respect to
the Gibbs distribution. This cannot be avoided.
Still, apart from subcriticality of the dominating percolation model, 
our total variation bound is more explicit and more general than the one
presented in \cite{Schuh09}.
We demonstrate this in Section \ref{sMatern}  by applying
 Theorem \ref{tpoapprox} to Mat\'{e}rn
type I thinnings of Gibbs processes. In this situation
the bound from Theorem \ref{tpoapprox} becomes very
concrete. 


\section{Gibbs processes}\label{secGibbs}

Let $(\BX,\cX)$ be a Borel space equipped with a
$\sigma$-finite measure $\lambda$ and define
the set ring $\cX_0:=\{B\in\cX:\lambda(B)<\infty\}$.
Let $\bN\equiv \bN(\BX)$ be the space of all measures on $\BX$
which are $\N_0$-valued on $\cX_0$ (where $\N_0:=\N\cup\{0\}=\{0,1,2,\ldots\}$) and
let $\cN\equiv\cN(\BX)$ denote the smallest $\sigma$-field
such that $\mu\mapsto \mu(B)$ is measurable for
all $B\in\cX$. A {\em point process} is a random
element $\eta$ of $\bN$, defined over some fixed
probability space $(\Omega,\cF,\BP)$.
The {\em intensity measure} of $\eta$ is the
measure $\BE[\eta]$ defined by $\BE[\eta](B):=\BE\eta(B)$, $B\in\cX$.
For any $\sigma$-finite measure $\nu$ on $\BX$ we let
$\Pi_\nu$ denote the distribution of a {\em Poisson process} 
(see e.g.\ \cite{LastPenrose17}) on $\BX$
with intensity measure $\nu$.  Of particular interest for
us are the distributions $\Pi_{\alpha\lambda_B}$, where $\alpha\ge 0$
and $\lambda_B:=\lambda(B\cap\cdot)$ is the restriction of
$\lambda$ to some $B\in\cX_0$. We use the latter notation for any
measure on $\BX$. For each $B\in\cX$ we define
$\bN_B:=\{\mu\in\bN:\mu(B^c)=0\}=\{\mu_B:\mu\in\bN\}$.
For $\mu \in \bN$ we write $x \in \mu$ if $\mu(\{x\})>0$.
If $\mu(\{x\})\in\{0,1\}$ for all $x\in\BX$ then
$\mu$ is called {\em simple}.

Let $\kappa\colon\BX\times\bN\to\R_+$ be measurable.
A point process $\xi$ on $\BX$ is called a {\em Gibbs process}
with {\em Papangelou intensity} (PI) $\kappa$ if
\begin{align}\label{eGNZ}
\BE\Bigg[\int f(x,\xi)\,\xi(\mathrm{d}x)\Bigg]=
\BE\bigg[ \int f(x,\xi+\delta_x) \kappa(x,\xi)\,\lambda(\mathrm{d}x)\bigg],
\end{align}
for each measurable $f\colon\BX\times\bN\to\R_+$.
The latter are the {\em GNZ equations} named after Georgii, Nguyen and Zessin
\cite{Georgii76,NgZe79}.

In the present generality it is not known, whether Gibbs processes
exist. Partial answers (under varying assumptions on $\kappa$) are given 
for instance in \cite{Ruelle70,Mase00,DeGeDro12,DVass19,Jansen19}.
A considerably more challenging topic is the distributional uniqueness of 
a Gibbs process. From a mathematical point of view not much is known in this
regard. We refer to \cite{Dereudre18,Jansen19} for recent surveys.

Equation \eqref{eGNZ} can be generalized. For $m\in\N$ we define
a measurable function $\kappa_m\colon \BX^m\times\bN\to \R_+$ by
\begin{align*}
 \kappa_m(x_1,\ldots,x_m,\mu)
:=\kappa(x_1,\mu)\kappa(x_2,\mu+\delta_{x_1})\cdots\kappa(x_m,\mu+\delta_{x_1}+\cdots+\delta_{x_{m-1}}).
\end{align*}
Note that $\kappa_1=\kappa$.
If $\xi$ is a Gibbs process with PI $\kappa$ then
we have for each $m\in\N$ and  for each measurable $f\colon\BX^m\times\bN\to\R_+$ that
\begin{align}\label{eGNZmulti}
\BE&\bigg[\int f(x_1,\ldots,x_m,\xi)\,\xi^{(m)}(\mathrm{d}(x_1,\ldots,x_m))\bigg]\\ \notag
&=\BE \bigg[\int f(x_1,\ldots,x_m,\xi+\delta_{x_1}+\cdots+\delta_{x_m}) \kappa_m(x_1,\ldots,x_m,\xi)
\,\lambda^m(\mathrm{d}(x_1,\ldots,x_m))\bigg],
\end{align}
where $\mu^{(m)}$ is the $m$-th factorial measure of $\mu$; see \cite{LastPenrose17}.
This follows by induction, using that
\begin{align*}
\kappa_{m+1}(x_1,\ldots,x_{m+1},\mu)=\kappa_m(x_1,\ldots,x_m,\mu)
\kappa(x_{m+1},\mu+\delta_{x_1}+\cdots+\delta_{x_m}). 
\end{align*}

For each $m\in\N$ let $\tilde\kappa_m\colon \BX^m\times\bN\to \R_+$ 
be the symmetrization of $\kappa_m$ in the first $m$ arguments.
The {\em Hamiltonian}
$H\colon\bN\times\bN\to (-\infty,\infty]$ 
(based on $\kappa$) is defined by
\begin{align}\label{eHamilton}
 H(\mu,\psi):=
\begin{cases}
0,&\text{if $\mu(\BX)=0$}, \\
 -\log \tilde\kappa_m(x_1,\ldots,x_m,\psi),&
 \text{if $\mu=\delta_{x_1}+\cdots+\delta_{x_m}$}, \\
 \infty,&\text{if $\mu(\BX)=\infty$}.
 \end{cases}
\end{align}
For $B\in\cX_0$ the {\em partition function}
$Z_B\colon\bN\to[0,\infty]$ is defined by
\begin{align}\label{epartition}
 Z_B(\psi):=\int e^{-H(\mu,\psi)}\,\Pi_{\lambda_B}(\mathrm{d}\mu),\quad \psi\in\bN.
\end{align}
Since $H(0,\psi)=0$ for all finite $\psi\in\bN$ we have that
\begin{align*}
Z_B(\psi)\ge e^{-\lambda(B)}, \quad B\in\cX_0.
\end{align*}
Note that the right-hand side is positive.
For $\nu\in\bN$
the {\em Gibbs measure} $\Pi_{B,\nu}$ on $\bN$ is defined by
\begin{align}\label{eGibbsmeasure}
\Pi_{B,\nu}:=Z_B(\nu)^{-1}\int \I\{\mu\in\cdot\}e^{-H(\mu,\nu)}\,\Pi_{\lambda_B}(\mathrm{d}\mu)
\end{align}
provided that $Z_B(\nu)<\infty$. If $Z_B(\nu)=\infty$ we set $\Pi_{B,\nu}:=\Pi_{\lambda_B}$.
This measure is concentrated on $\bN_B$.

It was proved in \cite{MaWaMe79,NgZe79} that
if $\xi$ is a Gibbs process with PI $\kappa$ then
$\kappa_m(\cdot,\xi)$ is almost surely symmetric for each $m\in\N$,
\begin{align}\label{e2.3}
\BP(Z_B(\xi_{B^c})<\infty)=1,\quad B\in\cX_0,
\end{align}
and, for each measurable $f\colon\bN\to\R_+$,
\begin{align}\label{edlr}
 \BE[f(\xi_B)\mid \xi_{B^c}]
  =\int f(\mu)\,\Pi_{B,\xi_{B^c}}(\mathrm{d}\mu),
 \quad B\in\cX_0,
\end{align}
where relations involving conditional expectations are assumed
to hold almost surely.
These are the {\em DLR-equations}; see~\cite{Ruelle70,Kallenberg,Mase00}.
Note that \eqref{edlr} implies
\begin{align}\label{eempty}
 \BP(\xi(B)=0\mid \xi_{B^c})=e^{-\lambda(B)}Z_B(\xi_{B^c})^{-1}, \quad B\in\cX_0.
\end{align}

A natural requirement on $\kappa$  is the {\em cocycle formula}
\begin{align}\label{ecocycle}
\kappa(x,\mu)\kappa(y,\mu+\delta_x)=\kappa(y,\mu)\kappa(x,\mu+\delta_y),
\end{align}
which should hold at least
for  $\lambda^2\otimes\Pi_\lambda$-a.e.\ $(x,y,\mu)$.
(Otherwise one cannot hope for the existence of a Gibbs process.)
It then follows for each $m\in\N$ that
\begin{align}\label{esymmetric}
\kappa_m(x_1,\ldots,x_m,\mu)=\tilde\kappa_m(x_1,\ldots,x_m,\mu),\quad
\end{align}
for $\lambda^m\otimes\Pi_\lambda$-a.e.\ $(x_1,\ldots,x_m,\mu)$.
Even though not stated in the present generality, the following result was proved
in the seminal work \cite{NgZe79}.

\begin{theorem}\label{tNguyenZessin} Assume that $\kappa$ satisfies
\eqref{ecocycle} for each $\mu\in\bN$ and for $\lambda^2$-a.e.\ $(x,y)$.
Assume that $\xi$ is a point process satisfying \eqref{e2.3}
and \eqref{edlr}. Then $\xi$ is a Gibbs process with PI $\kappa$.
\end{theorem}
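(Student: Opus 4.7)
My plan is to derive the GNZ equation \eqref{eGNZ} from the DLR equation \eqref{edlr}. Since $\lambda$ is $\sigma$-finite I will pick $B_n\in\cX_0$ with $B_n\uparrow\BX$, and by monotone convergence it will suffice to prove \eqref{eGNZ} with both integrals restricted to $x\in B$, for each fixed $B\in\cX_0$.

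Fixing such a $B$, I would condition on $\xi_{B^c}$, setting $\nu:=\xi_{B^c}$. By \eqref{e2.3} one may work on the almost sure event $\{Z_B(\nu)<\infty\}$, and \eqref{edlr} identifies the conditional law of $\xi_B$ as $\Pi_{B,\nu}$. Using the density $Z_B(\nu)^{-1}e^{-H(\cdot,\nu)}$ of $\Pi_{B,\nu}$ with respect to $\Pi_{\lambda_B}$ together with Mecke's formula for the Poisson process $\Pi_{\lambda_B}$, the conditional expectation of the restricted left-hand side of \eqref{eGNZ} becomes
\begin{align*}
Z_B(\nu)^{-1}\int\!\!\int f(x,\mu+\delta_x+\nu)\,e^{-H(\mu+\delta_x,\nu)}\,\lambda_B(\mathrm{d}x)\,\Pi_{\lambda_B}(\mathrm{d}\mu).
\end{align*}
The heart of the argument will be the pointwise identity
\begin{align*}
e^{-H(\mu+\delta_x,\nu)}=e^{-H(\mu,\nu)}\,\kappa(x,\nu+\mu),
\end{align*}
valid for $\lambda_B\otimes\Pi_{\lambda_B}$-a.e.\ $(x,\mu)$. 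Granted this, substituting it in and re-absorbing $Z_B(\nu)^{-1}e^{-H(\mu,\nu)}\,\Pi_{\lambda_B}(\mathrm{d}\mu)$ into $\Pi_{B,\nu}(\mathrm{d}\mu)$ recovers exactly the conditional expectation of $\int_B f(x,\xi+\delta_x)\kappa(x,\xi)\,\lambda(\mathrm{d}x)$; taking expectations and then letting $B\uparrow\BX$ yields \eqref{eGNZ}.

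It remains to justify the key identity. Writing $\mu=\delta_{x_1}+\cdots+\delta_{x_m}$ and using \eqref{eHamilton}, it reduces to $\tilde\kappa_{m+1}(x_1,\ldots,x_m,x,\nu)=\tilde\kappa_m(x_1,\ldots,x_m,\nu)\,\kappa(x,\nu+\mu)$, which by the recursion defining $\kappa_{m+1}$ from $\kappa_m$ follows once \eqref{esymmetric} is available in the required a.e.\ sense. Since the cocycle \eqref{ecocycle} is assumed to hold for \emph{every} $\mu\in\bN$ (and $\lambda^2$-a.e.\ $(x,y)$), induction over pairwise transpositions yields $\kappa_m=\tilde\kappa_m$ on a set of full $\lambda^m\otimes\rho$-measure for any probability measure $\rho$ on $\bN$, in particular for $\rho=\Pi_{\lambda_B}$. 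The main obstacle is precisely this bookkeeping of the null sets generated by the symmetrization (and their propagation through the Mecke step, which introduces the joint law $\lambda_B\otimes\Pi_{\lambda_B}$ on the added point and the background configuration); once that is controlled, the remaining manipulations are routine computations with densities.
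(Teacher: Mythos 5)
The paper does not include a proof of Theorem~\ref{tNguyenZessin}; it attributes the result to the original work of Nguyen and Zessin \cite{NgZe79} (with \cite{MaWaMe79}), so there is no in-text argument to compare against. Your derivation is nonetheless correct and is essentially the canonical route from DLR to GNZ: restrict $x$ to a set $B\in\cX_0$, condition on $\xi_{B^c}$ on the full-measure event $\{Z_B(\xi_{B^c})<\infty\}$ guaranteed by \eqref{e2.3}, rewrite the Gibbs measure $\Pi_{B,\nu}$ through its density $Z_B(\nu)^{-1}e^{-H(\cdot,\nu)}$ with respect to $\Pi_{\lambda_B}$, apply the Mecke equation, and use the pointwise identity $e^{-H(\mu+\delta_x,\nu)}=e^{-H(\mu,\nu)}\kappa(x,\nu+\mu)$, which reduces via \eqref{eHamilton} to the $\lambda^{m+1}$-a.e.\ identity $\tilde\kappa_{m+1}(x_1,\ldots,x_m,x,\nu)=\tilde\kappa_m(x_1,\ldots,x_m,\nu)\kappa(x,\nu+\mu)$, precisely \eqref{esymmetric} combined with the defining recursion for $\kappa_{m+1}$. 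Your observation that the cocycle hypothesis (for every $\mu$ and $\lambda^2$-a.e.\ pair) propagates, by iterating adjacent transpositions and using that permutations preserve $\lambda^m$, to $\kappa_m=\tilde\kappa_m$ on a $\lambda^m$-full set for every $\mu$---and therefore on a $\lambda^m\otimes\rho$-full set for any probability $\rho$, in particular $\rho=\Pi_{\lambda_B}$---is the right way to control the null sets, and it indeed survives the Mecke step since the added point carries the law $\lambda_B$. The final passage to $B\uparrow\BX$ by monotone convergence is unproblematic because $f\ge 0$. In short, the argument is sound and fills in what the paper cites from the literature.
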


In fact, the proof of Theorem \ref{tNguyenZessin} shows the following.

\begin{corollary}\label{c2.3} Assume that $\lambda(\BX)<\infty$.
Assume also that $\kappa$ satisfies the assumption of Theorem \ref{tNguyenZessin}
and that $Z_\BX:=\int e^{-H(\mu,0)}\,\Pi_{\lambda}(\mathrm{d}\mu)<\infty$.
Then
\begin{align}\label{edlr2}
Z_{\BX}^{-1}\int\I\{\mu\in\cdot\}e^{-H(\mu,0)}\,\Pi_{\lambda}(\mathrm{d}\mu)
\end{align}
is the distribution of a Gibbs process with PI $\kappa$.
\end{corollary}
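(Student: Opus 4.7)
The plan is to verify the GNZ equation \eqref{eGNZ} directly for a point process $\xi$ whose distribution is given by \eqref{edlr2}. Rather than routing through Theorem \ref{tNguyenZessin} (which would require checking \eqref{e2.3} and \eqref{edlr} for every $B\in\cX_0$), I will exploit the Mecke equation for the dominating Poisson process $\Pi_\lambda$ together with the cocycle assumption on $\kappa$, which together reduce \eqref{eGNZ} to a pointwise identity relating the Hamiltonian weights $e^{-H(\mu,0)}$ and $e^{-H(\mu+\delta_x,0)}$.

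Fix a measurable $f\colon\BX\times\bN\to\R_+$. The left-hand side of \eqref{eGNZ} equals
\begin{align*}
Z_{\BX}^{-1}\int\int f(x,\mu)\,e^{-H(\mu,0)}\,\mu(\mathrm{d}x)\,\Pi_{\lambda}(\mathrm{d}\mu),
\end{align*}
which, by the Mecke formula applied to the integrand $h(x,\mu):=f(x,\mu)e^{-H(\mu,0)}$, rewrites as
\begin{align*}
Z_{\BX}^{-1}\int\int f(x,\mu+\delta_x)\,e^{-H(\mu+\delta_x,0)}\,\Pi_{\lambda}(\mathrm{d}\mu)\,\lambda(\mathrm{d}x).
\end{align*}
On the other hand, the right-hand side of \eqref{eGNZ} computed under \eqref{edlr2} is
\begin{align*}
Z_{\BX}^{-1}\int\int f(x,\mu+\delta_x)\,\kappa(x,\mu)\,e^{-H(\mu,0)}\,\Pi_{\lambda}(\mathrm{d}\mu)\,\lambda(\mathrm{d}x),
\end{align*}
so it suffices to establish the pointwise identity
\begin{align*}
e^{-H(\mu+\delta_x,0)}=\kappa(x,\mu)\,e^{-H(\mu,0)}\quad\text{for }\lambda\otimes\Pi_{\lambda}\text{-a.e. }(x,\mu).
\end{align*}

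To verify this identity, write $\mu=\delta_{x_1}+\cdots+\delta_{x_m}$ (allowing coincidences if $\lambda$ has atoms). By \eqref{eHamilton} one has $e^{-H(\mu,0)}=\tilde\kappa_m(x_1,\ldots,x_m,0)$ and $e^{-H(\mu+\delta_x,0)}=\tilde\kappa_{m+1}(x_1,\ldots,x_m,x,0)$. The cocycle assumption yields \eqref{esymmetric}, so $\tilde\kappa_m=\kappa_m$ and $\tilde\kappa_{m+1}=\kappa_{m+1}$ off null sets, while the recursion built into the definition of $\kappa_{m+1}$ gives $\kappa_{m+1}(x_1,\ldots,x_m,x,0)=\kappa_m(x_1,\ldots,x_m,0)\,\kappa(x,\mu)$. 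Chaining these equalities produces the required identity.

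The main obstacle is the careful bookkeeping of the exceptional sets. The cocycle is assumed pointwise in $\mu$ but only $\lambda^2$-a.e. in $(x,y)$, and one must check, via Fubini and disintegration of $\Pi_\lambda$ according to the total number of points, that the resulting null sets remain negligible after the Mecke substitution introduces an additional $\lambda$-integration. Finiteness of $\lambda(\BX)$ and of $Z_{\BX}$ ensures that $\mu$ is $\Pi_\lambda$-almost surely a finite configuration, so the infinite case in \eqref{eHamilton} plays no role and all interchanges of integration are legitimate. Once this bookkeeping is in place, the two displayed expressions coincide, \eqref{eGNZ} is established, and $\xi$ is a Gibbs process with Papangelou intensity $\kappa$.
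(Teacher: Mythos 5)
Your proof is correct and it follows essentially the same line as the paper's: the paper simply remarks that Corollary~\ref{c2.3} ``follows from the proof of Theorem~\ref{tNguyenZessin}'' (which is cited from~\cite{NgZe79}), and that proof is precisely the Mecke-plus-cocycle computation you carry out explicitly, specialized to $B=\BX$ with trivial boundary condition. Your two displayed identities and the reduction of the GNZ equation to the pointwise relation $e^{-H(\mu+\delta_x,0)}=\kappa(x,\mu)e^{-H(\mu,0)}$ are exactly right, and the recursion $\kappa_{m+1}(x_1,\ldots,x_m,x,0)=\kappa_m(x_1,\ldots,x_m,0)\kappa(x,\mu)$ combined with $\tilde\kappa_m=\kappa_m$ a.e.\ (\eqref{esymmetric}) chains to give that relation. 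The null-set bookkeeping you flag is genuine but harmless here: either via the series expansion of $\Pi_\lambda$ into $\tfrac{e^{-\lambda(\BX)}}{m!}\lambda^m$-slices, or simply because $\Pi_\lambda(\{0\})=e^{-\lambda(\BX)}>0$ when $\lambda(\BX)<\infty$, the $\lambda^m\otimes\Pi_\lambda$-a.e.\ symmetry in \eqref{esymmetric} descends to $\lambda^m$-a.e.\ symmetry at $\mu=0$. (Note that finiteness of $\mu$ under $\Pi_\lambda$ needs only $\lambda(\BX)<\infty$; the finiteness of $Z_\BX$ is only what makes \eqref{edlr2} a probability measure.)
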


For $B\in\cX_0$ and $\psi\in\bN$ we
define  $\kappa_\psi\colon\BX\times\bN\to\R_+$ by
\begin{align}\label{ekappapsi}
\kappa_\psi(x,\mu):=\kappa(x,\psi+\mu)
\end{align}  
Let $\kappa_{B,\psi}$ denote the restriction of $\kappa_\psi$ to $B\times \bN_B$.
Corollary \ref{c2.3} shows that
the conditional distribution $\BP(\xi_B\in\cdot \mid \xi_{B^c})$
is almost surely a Gibbs process with PI  $\kappa_{B,\xi_{B^c}}$;
see also \cite[Lemma 2.5]{BHLV20}.

In Section \ref{secdisagreement} we shall need the following property of Gibbs measures.
Let $B,C\in\cX_0$ with $C\subset B$ 
and suppose that $\nu\in N_{\BX\setminus B}$ satisfies  $Z_B(\nu)<\infty$.
Then $Z_{B\setminus C}(\nu+\mu_C)<\infty$ for $\Pi_{B,\nu}$-a.e.\ $\mu$
and
\begin{align}\label{condDLR}
\Pi_{B,\nu}=\iint \I\{\mu_{C}+\mu'\in\cdot\}
\,\Pi_{B\setminus C,\nu+\mu_{C}}(\mathrm{d}\mu')\,\Pi_{B,\nu}(\mathrm{d}\mu).
\end{align}
This is the DLR-equation for a Gibbs process
with PI $\kappa_{B,\nu}$. It can be proved directly, using
the definition \eqref{eGibbsmeasure} of a Gibbs measure and the fact that
a Poisson process on $B$ is the sum of two independent
Poisson processes on $C$ and $B\setminus C$.

Finally we introduce some (partially standard) notation used later in the paper.
Let $\nu$ be a signed measure on some measurable
space $(\BY,\cY)$ which is locally finite in a suitable sense.
For instance $\nu$ could be the difference of two measures
on $\BX$ which are finite on $\cX_0$. 
Then $\nu$ can be uniquely written as the difference of two 
mutually singular locally finite measure, whose sum is the 
total variation measure $|\nu|$ of $\nu$. The total mass
$\|\nu\|:=|\nu|(\BY)$ (which could be infinite)
is the {\em total variation norm} of $\nu$.
If $\mu,\mu'\in\bN(\BX)$ then also $|\mu-\mu'|\in\bN(\BX)$.
If $\mu,\mu'$ are locally finite measure on $\BY$, then we can define
as in \cite{BSY21}
\begin{align}\label{e2.141}
\tv(\mu,\mu'):=\sup\{|\mu(A)-\mu'(A)|:A\in\cY,\mu(A)<\infty,\mu'(A)<\infty\}.
\end{align}
It is not hard to show that
\begin{align*}
\tv(\mu,\mu')\le \|\mu-\mu'\|\le 2\tv(\mu,\mu');
\end{align*}
see \cite[(2.2)]{BSY21} for the case $\mu,\mu'\in\bN$.
If $\mu(\BY)=\mu'(\BY)<\infty$ (and in particular if $\mu,\mu'$ are probability measures) 
then the second inequality above becomes an identity.

\section{Assumptions and Examples}\label{secass}

In this paper a measurable function
$\kappa\colon\BX\times\bN\to\R_+$ will always denote
a PI of a Gibbs process.
To simplify the presentation we shall always assume (sometime without further
mentioning) that $\kappa$ satisfies the cocycle
identity \eqref{ecocycle} for all $(x,y,\mu)\in\BX^2\times\bN$.
Very much as in the literature we will make two types
of assumptions on $\kappa$, namely stability 
assumptions and assumptions on the local dependence of $\kappa(x,\mu)$ on
$\mu$.
We say that $\psi\in\bN$ is {\em stable on $W\in\cX$} if 
\begin{align}\tag{Dom1}
Z_B(\psi_{B^c})<\infty,\quad B\in\cX_0\cap W,
\end{align}
where $\cX_0\cap W:=\{B\cap W:B\in\cX_0\}.$
If this holds for $W=\BX$ then we say that 
$\psi\in\bN$ is {\em stable}. 
We say that $\kappa$ is {\em locally stable}
if there exists a measurable function
$\alpha\colon\BX\to(0,\infty)$ satisfying
\begin{align}\tag{Dom2}
\kappa(x,\mu)\le \alpha(x),\quad (x,\mu)\in\BX\times\bN(\BX),
\quad\text{and}\quad
\int_B \alpha(x)\,\lambda(\mathrm{d}x)<\infty,\quad B\in\cX_0.
\end{align}

\begin{remark}\label{rbounded}\rm A sufficient condition for $\psi\in\bN$
to be stable on $W\in\cX_0$ is as follows. Suppose that
$\alpha_{W,\psi}\colon W\to\R_+$ is measurable and satisfies
$\int_W \alpha_{W,\psi}(x)\,\lambda(\mathrm{d}x)<\infty$.
If
\begin{align}
\sup\{\kappa(x,\mu_W+\psi_{W^c}):\mu\in\bN\}\le \alpha_{W,\psi}(x),\quad x\in W,\,\mu\in\bN,
\end{align}
then the definition of $Z_B$ shows that (Dom1) holds. In particular, if $\kappa$ is locally
stable, then all $\psi\in\bN$ are stable.
\end{remark}

To state our second type of assumptions we assume
that $\sim$ is a symmetric relation on $\BX$ such
that $\{(x,y):x\sim y\}$ is a measurable subset of $\BX^2$.
Given $x\in\BX$ and $A\subset \BX$ we write $x\sim A$ if there exists
$z\in A$ such that $x\sim z$.
We say that $x,z\in \BX$ are {\em connected} via $A$ if there
exist $n\in\N_0$ and $z_1,\ldots,z_n\in A$ such that
$z_i\sim z_{i+1}$ for $i\in\{0,\ldots,n\}$ where $z_0:=x$ and $z_{n+1}:=z$.
We use this terminology also for counting measures instead of $A$.
Define the {\em cluster}  $C(x,\mu)\in\bN$ of $x\in\BX$ in $\mu\in\bN$ by
\begin{align*}
C(x,\mu):=\int\I\{y\in\cdot\}\I\{\text{$y\ne x$ and $x$ and $y$ are connected via $\mu$}\}\,\mu(\mathrm{d}y).
\end{align*}
We will consider the assumption
\begin{align}\tag{Loc1}
\kappa(x,\mu)=\kappa(x,C(x,\mu)),\quad (x,\mu)\in\BX\times\bN.
\end{align}
For $x\in\BX$ we let $N_x:=\{y\in\BX \setminus \{x\}:y\sim x\}$.
A stronger version of (Loc1) is
\begin{align}\tag{Loc2}
\kappa(x,\mu)=\kappa(x,\mu_{N_x}),\quad (x,\mu)\in\BX\times\bN.
\end{align}
Of course, in such a general setting the assumptions (Loc1) and (Loc2)
do not put a restriction on $\kappa$. It is the specific choice of $\sim$ together
with further assumptions on $\lambda$, which will make them meaningful.

The following examples will illustrate the forgoing assumptions.

\begin{example} \rm \textbf{(Gibbs processes with pair potential)} 
	Suppose that $U$ is a {\em pair potential} on $\BX$ that
	is a measurable and symmetric function
	$U\colon\BX\times\BX\to(\infty,\infty]$. 
	We assume that there exists $C>0$ such that
	\begin{align*}
		\sum_{i<j}U(x_i,x_j)\ge -Cn,\quad x_1,\ldots,x_n\in\BX,\, n\in\N.
	\end{align*}
	This is a classical stability assumption on a pair potential; see \cite{Ruelle70}.
	Let $\alpha>0$ and assume that $\kappa$ is given by
	\begin{align*}
		\kappa(x,\mu)=\alpha \exp\bigg[-\int U(x,y)\,\mu(\mathrm{d}y)\bigg],\quad 
		x \in \BX,\,\mu \in \mathbf{N},
	\end{align*}
	whenever the integral in the right-hand side exists. 
	Let $W\in\cX_0$ and $\psi\in\bN$. A simple calculation shows
	that $Z_W(\psi_{W^c})<\infty$ if
	\begin{align}\label{eZS}
		\int_W\exp\bigg[-\int U(x,y)\,\psi(\mathrm{d}y)\bigg]\,\lambda(\mathrm{d}x)<\infty.
	\end{align}
	Assume moreover that $U$ is bounded from below
	and let $\psi\in \bN$ satisfy \eqref{eZS}. Then it is easy to see 
	that $\psi$ is stable on $W$ and also that $\psi$ is a regular boundary
	condition for $W$. 
	Define $x \sim y \Leftrightarrow U(x,y)\ne 0$.
	Then (Loc2) holds. If $U\ge 0$, then (Dom2) holds.
		
	The existence of Gibbs processes with a pair potential
	was shown under varying assumptions. We refer here to
	the seminal work \cite{Ruelle69,Ruelle70} (treating the case $\BX=\R^d$),
	to \cite{Jansen19} (working on a complete separable
	metric space and assuming $U\ge 0$) and
	to \cite{DVass19} (working on $\R^d$ but allowing for
	infinite range pair potentials which may take negative values).
\end{example}

\begin{example}\rm \textbf{(Strauss process)} \label{exstrauss}
	Suppose that $\mathbb{X}=\R^d \times \R_+$ and
	$\lambda=\lambda_d \otimes \mathbb{Q}$ for some probability measure
	$\mathbb{Q}$ on $[0,\infty)$. Define $(x,r)\sim (y,s) \Leftrightarrow \|x-y\|\le r+s$. Consider for
	$\alpha>0$ and $\beta \in [0,1]$ the PI
	\begin{align*}
		\kappa(x,r,\mu):=\alpha \beta^{\mu(N_{(x,r)})},\quad x \in \R^d, r>0, \mu \in \mathbf{N}.
	\end{align*}
	A Gibbs process $\xi$ with this PI is called Strauss
	process. For $\beta=0$ this is  a hard-core process. It
	is easy to see that $\kappa$ satisfies (Dom2) and (Loc2).
\end{example}

\begin{example}\rm \textbf{(Area interaction process)} 
	As for the Strauss process, let $\mathbb{X}:=\R^d \times \R_+$ and
	$\lambda:=\lambda_d \otimes \mathbb{Q}$ for some probability measure
	$\mathbb{Q}$ on $[0,\infty)$. Define $(x,r)\sim (y,s) \Leftrightarrow \|x-y\|\le r+s$.
	For $(x,r)\in\BX$ and $\mu\in\bN$ define 
	\begin{align*}
		V(x,r,\mu):=\lambda_d\bigg(B(x,r)\setminus\bigcup_{(y,s)\in \mu_{N_{(x,r)}}}B(y,s)\bigg).
	\end{align*}
	Consider for $\alpha>0$ and $\beta \in [0,1]$ the PI
	\begin{align*}
		\kappa(x,r,\mu):=\alpha \beta^{V(x,r,\mu)},\quad x \in \R^d, r>0, \mu \in \mathbf{N}.
	\end{align*}
	Then (Loc2) holds.
	Gibbs processes with this PI are called area interaction
	processes. Their
	existence is shown in \cite{Ruelle69} for $\mathbb{Q}$ with finite support and in 
	\cite{Dereudre09} for dimension $d\le 2$.
	If $\mathbb{Q}([r_1,\infty))=1$ for some $r_1>0$, then
	(Dom2) is satisfied.
\end{example}

\begin{example}\rm \textbf{(Continuum random cluster model)}
	Let $\mathbb{X}:=\R^d \times \R_+$ and
	$\lambda:=\lambda_d \otimes \mathbb{Q}$ for some probability measure
	$\mathbb{Q}$ on $[0,\infty)$. Define
	$(x,r)\sim (y,s) \Leftrightarrow \|x-y\|\le r+s$. Let $\alpha>0$,
	$q>0$ and $k(x,r,\mu)$ denote the number of connected components in
	$\mu$ connected to $(x,r)$ in $\mu+\delta_{(x,r)}$ and define
	\begin{align*}
		\kappa(x,r,\mu)=\alpha q^{1-k(x,r,\mu)},\quad x \in \R^d, r>0, \mu \in \mathbf{N}.
	\end{align*}
	It is clear that (Loc1) holds.   
	If $q\ge 1$ then (Dom2) holds. Moreover it was shown in \cite{DeHou15}
	that in both cases there exists a Gibbs process
	$\xi$ with PI $\kappa$. If there are $0<r_1<r_2<\infty$ such that
	$\mathbb{Q}([r_1,r_2])=1$, then there exists a constant $c_d$ (depending on
	the dimension $d$) such that $\kappa(x,r,\mu)\le q^{1-c_d}$ and
	(Dom2) holds.
\end{example}

\begin{example} \rm \textbf{(Widom-Rowlinson model)} For $m \ge 2$ let
	$\mathbb{M}:=\{1,\dots,m\}$,
	$\mathbb{X}:=\R^d \times \R_+ \times \mathbb{M}$ and
	$\lambda:=\lambda_d \otimes \mathbb{Q} \otimes \mathbb{U}$, where
	$\mathbb{Q}$ is a probability measure on $[0,\infty)$ and
	$\mathbb{U}$ denotes uniform distribution on $\mathbb{M}$. Define
	\begin{align*}
		(x,r,\ell)\sim (y,s,k)\quad \Longleftrightarrow \quad \|x-y\|\le r+s\,\text{ and }\ell\neq k,\quad x,y \in \mathbb{R}^d,\,r,s\in \R_+,\,\ell,k \in \mathbb{M}.
	\end{align*}
	and let
	\begin{align*}
		\kappa(x,r,\ell,\mu)=
		\begin{cases}
			0,& \text{if $\mu(N_{(x,r,\ell)})>0$},\\
			\alpha,&\text{otherwise}.
		\end{cases}
	\end{align*}
	If we think of the elements of $\mathbb{M}$ as colors, the
	construction rule of this model forbids overlapping of two balls of
	different colors. It was first introduced in \cite{WidRow70} for
	deterministic radii. For its existence we refer to \cite[Remark
	4.2]{GeorHae96}. Clearly, (Dom2) and (Loc2) are satisfied.
\end{example}

\section{Thinning presentation of finite Gibbs processes}\label{secthinning}

In this section we assume that the measure $\lambda$ is finite and diffuse.
We consider a measurable function $\kappa\colon\BX\times\bN\to\R_+$
satisfying the stability assumption (Dom2).
We know from \cite{GeorKun97} (see \cite{BHLV20} for the infinite case)
that a Gibbs process with PI $\kappa$ is stochastically
dominated by a Poisson process with intensity measure $\alpha\lambda$,
defined by $\alpha\lambda(\mathrm{d}x):=\alpha(x)\lambda(\mathrm{d}x)$.
In this section we shall reestablish this result
by means of a rather explicit thinning construction, introduced in 
\cite[Proposition 4.1]{HTHou17}.
(The proof was amended in the latest preprint version of that paper.) 
We work here in greater generality 
using fundamental properties of Poisson processes
and standard techniques from Lebesgue--Stieltjes calculus.

We need to introduce some notation. By definition of a Borel space there
exists an injective measurable mapping $\varphi\colon\BX\to\R$
such that $\varphi(\BX)$ and the inverse mapping 
$\varphi^{-1}\colon\varphi(\BX)\to\BX$ are measurable.
We introduce a total order $\le$ on $\BX$ by writing
$x\le y$ if $\varphi(x)\le\varphi(y)$. We write
$x< y$ if $x\le y$ and $x\ne y$. As usual we can then
define $(x,y]:=\{z\in\BX:x<z\le y\}$. Other intervals
are defined analogously. For instance we write
$(-\infty,y):=\{z\in\BX:z< y\}$.
Let $\bN^*\equiv \bN^*(\mathbb{X})$ 
denote the set of all simple and finite elements
of $\bN$. 
It is easy to see that the mapping
$(x,\mu)\mapsto (\mu_{(-\infty,x)},\mu_{(-\infty,x]})$ is measurable
on $\BX\times\bN^*$. We abbreviate $\mu_x:=\mu_{(-\infty,x)}$.

Assume that $\kappa\colon\BX\times\bN^*\to\R_+$ 
is a measurable function satisfying the cocycle assumption
\eqref{ecocycle} for all $(x,y,\mu)\in\BX^2\times\bN^*$.
We define the partition functions by \eqref{epartition}.
Define a function $p\colon \BX\times\bN^*\to[0,1]$ by
\begin{align}\label{ethprob}
p(x,\psi):=\kappa(x,\psi_x)
\frac{Z_{(x,\infty)}(\psi_x+\delta_x)}{Z_{(x,\infty)}(\psi_x)},\quad (x,\psi)\in\BX\times\bN^*,
\end{align}
where $\infty/\infty:=0$. Since
\begin{align*}
Z_{(x,\infty)}(\psi_x+\delta_x)
&=\int e^{-H(\mu_{(x,\infty)},\psi+\delta_x)}\,\Pi_{\lambda}(\mathrm{d}\mu),\\
Z_{[x,\infty)}(\psi_x)
&=\int e^{-H(\mu_{(x,\infty)},\psi)}\,\Pi_{\lambda}(\mathrm{d}\mu),
\end{align*}
the function $p$ is measurable. It turns out that it also satisfies
(Dom2).

\begin{lemma}\label{l4.1} We have that
\begin{align}\label{edom}
p(x,\psi)\le \alpha(x), \quad (x,\psi)\in \BX\times\bN^*.
\end{align}
\end{lemma}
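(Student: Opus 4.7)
The plan is to unfold the definition of $p(x,\psi)$, multiply through by $Z_{(x,\infty)}(\psi_x)$, and use the cocycle identity to rewrite the resulting integrand on $\bN_{(x,\infty)}$ in a form where the bound $\kappa\le\alpha$ from (Dom2) applies directly. Concretely, I want to establish the pointwise inequality
\begin{align*}
\kappa(x,\psi_x)\,Z_{(x,\infty)}(\psi_x+\delta_x)\le \alpha(x)\,Z_{(x,\infty)}(\psi_x),
\end{align*}
which, since $Z_{(x,\infty)}(\psi_x)\ge e^{-\lambda((x,\infty))}>0$, yields \eqref{edom} after dividing (the cases where either $Z$ is infinite are handled by the convention $\infty/\infty:=0$, or force $\kappa(x,\psi_x)=0$ via the inequality).

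The key identity comes from the fact that $\kappa$ satisfies the cocycle formula \eqref{ecocycle} for all $(x,y,\mu)$, so by \eqref{esymmetric} each $\kappa_m$ agrees with its symmetrization $\tilde\kappa_m$. Writing $\mu=\delta_{x_1}+\cdots+\delta_{x_m}\in\bN^*_{(x,\infty)}$ and using the symmetry of $\kappa_{m+1}$ in its first $m+1$ arguments,
\begin{align*}
\kappa(x,\psi_x)\,\kappa_m(x_1,\ldots,x_m,\psi_x+\delta_x)
&=\kappa_{m+1}(x,x_1,\ldots,x_m,\psi_x)\\
&=\kappa_{m+1}(x_1,\ldots,x_m,x,\psi_x)\\
&=\kappa_m(x_1,\ldots,x_m,\psi_x)\,\kappa(x,\psi_x+\mu).
\end{align*}
Since $\kappa(x,\psi_x+\mu)\le \alpha(x)$ by (Dom2), and since the definition \eqref{eHamilton} of $H$ gives $e^{-H(\mu,\cdot)}=\kappa_m(x_1,\ldots,x_m,\cdot)$ on $\mu$ of cardinality $m$, I obtain
\begin{align*}
\kappa(x,\psi_x)\,e^{-H(\mu,\psi_x+\delta_x)}\le \alpha(x)\,e^{-H(\mu,\psi_x)},\quad \mu\in\bN^*_{(x,\infty)}.
\end{align*}
The trivial case $\mu=0$ holds as well since $H(0,\cdot)=0$ and $\kappa(x,\psi_x)\le\alpha(x)$.

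Finally I integrate this pointwise bound against $\Pi_{\lambda_{(x,\infty)}}$, which is concentrated on $\bN^*_{(x,\infty)}$ because $\lambda$ is finite and diffuse; this yields the displayed inequality above, and dividing by $Z_{(x,\infty)}(\psi_x)$ gives \eqref{edom}. The only subtlety — and the main thing worth flagging — is the cocycle/symmetry manipulation that repositions the point $x$ from the ``new'' configuration $\psi_x+\delta_x$ into the ``old'' configuration $\psi_x+\mu$; once that is in place the rest is routine monotone integration.
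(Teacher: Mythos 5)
Your proof is correct and uses essentially the same argument as the paper: both hinge on the identity $\kappa(x,\psi_x)\kappa_m(x_1,\ldots,x_m,\psi_x+\delta_x)=\kappa_{m+1}(x_1,\ldots,x_m,x,\psi_x)=\kappa(x,\psi_x+\delta_{x_1}+\cdots+\delta_{x_m})\kappa_m(x_1,\ldots,x_m,\psi_x)$ followed by the (Dom2) bound. The only difference is cosmetic — you phrase it as a pointwise integrand inequality and then integrate against $\Pi_{\lambda_{(x,\infty)}}$, whereas the paper first expands both partition functions via the multivariate Mecke equation and bounds term by term; you also helpfully make explicit the use of the symmetry of $\kappa_{m+1}$, which the paper applies tacitly.
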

{\em Proof:} Let $x \in \mathbb{X}$ and $\psi \in \bN^*$.
By the multivariate Mecke equation (\cite[Theorem 4.4]{LastPenrose17}),
\begin{align*}
\kappa(x,&\psi_x) Z_{(x,\infty)}(\psi_x+\delta_x)
=\kappa(x,\psi_x) \int e^{-H(\mu,\psi_x+\delta_x)}\,\mathrm{d}\Pi_{\lambda_{(x,\infty)}}(\mathrm{d}\mu)\\
  &=\kappa(x,\psi_x) e^{-\lambda((x,\infty))}\left(1+\sum_{m=1}^\infty \frac{1}{m!}
\int_{\mathbb{X}^m} \kappa_m(x_1,\dots,x_m,\psi_x+\delta_x)\,\lambda_{(x,\infty)}^m(\mathrm{d}(x_1,\dots,x_m))\right)\\
  &=e^{-\lambda((x,\infty))}\left(\kappa(x,\psi_x) + \sum_{m=1}^\infty \frac{1}{m!}
\int_{\mathbb{X}^m} \kappa_{m+1}(x_1,\dots,x_m,x,\psi_x)
\,\lambda_{(x,\infty)}^m(\mathrm{d}(x_1,\dots,x_m))\right).
\end{align*}
Since
$$
\kappa_{m+1}(x_1,\dots,x_m,x,\psi_x)=\kappa(x,\psi_x+\delta_{x_1}+\cdots+\delta_{x_m})
\kappa_m(x_1,\dots,x_m,\psi_x)
$$
and $\kappa(x,\cdot)\le \alpha(x)$ by (Dom2), the above is
bounded by
\begin{align*}
  &\alpha(x) e^{-\lambda((x,\infty))}\left(1+\sum_{m=1}^\infty \frac{1}{m!}
\int_{\mathbb{X}^m} \kappa_m(x_1,\dots,x_m,\psi_x)\,\lambda_{(x,\infty)}^m(\mathrm{d}(x_1,\dots,x_m))\right)
\end{align*}
which equals $\alpha(x) Z_{(x,\infty)}(\psi_x)$. This proves \eqref{edom}.
\qed

\bigskip

Let us define a kernel $K_{\kappa,\alpha}$ from $\bN^*$ to $\bN^*$ by
\begin{align}
  K_{\kappa,\alpha}(\mu,\cdot)
:=\sum_{\psi\le\mu}\I\{\psi\in\cdot\}\prod_{x\in\psi} \alpha(x)^{-1}p(x,\psi)
\prod_{x\in\mu-\psi}\big(1-\alpha(x)^{-1}p(x,\psi)\big),
\quad \mu\in\bN^*.
\end{align}
The next lemma shows that $K_{\kappa,\alpha}$ is a probability kernel.

\begin{lemma} We have that $K_{\kappa,\alpha}(\mu,\mathbf{N}^*)=1$ for all
$\mu \in \mathbf{N}^*$.
\end{lemma}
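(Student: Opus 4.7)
The plan is to exploit the causal (triangular) structure hidden in the definition: the factor attached to a point $x$ of $\mu$ only ``looks backwards'', because $p(x,\psi)$ depends on $\psi$ only through $\psi_x = \psi_{(-\infty,x)}$. Since $\mu \in \bN^*$ is simple and finite, I can enumerate its atoms as $x_1 < x_2 < \cdots < x_n$ in the total order induced by $\varphi$. Every $\psi \le \mu$ is then uniquely encoded by an indicator vector $\epsilon = (\epsilon_1,\ldots,\epsilon_n) \in \{0,1\}^n$ via $\psi = \sum_{i:\epsilon_i=1}\delta_{x_i}$, and because $x_1 < \cdots < x_n$ we have $\psi_{x_i} = \sum_{j<i}\epsilon_j\delta_{x_j}$. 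Hence, writing $q_i(\epsilon_1,\ldots,\epsilon_{i-1}) := \alpha(x_i)^{-1} p(x_i,\psi)$, the quantity $q_i$ depends on $\epsilon$ only through the first $i-1$ coordinates.

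Lemma \ref{l4.1} together with the positivity of $\alpha$ guarantees $q_i \in [0,1]$, so every summand in $K_{\kappa,\alpha}(\mu,\bN^*)$ is nonnegative and at most $1$. I would then rewrite
\begin{align*}
K_{\kappa,\alpha}(\mu,\bN^*) = \sum_{\epsilon_1 \in \{0,1\}} \sum_{\epsilon_2 \in \{0,1\}} \cdots \sum_{\epsilon_n \in \{0,1\}} \prod_{i=1}^n q_i(\epsilon_1,\ldots,\epsilon_{i-1})^{\epsilon_i}\bigl(1-q_i(\epsilon_1,\ldots,\epsilon_{i-1})\bigr)^{1-\epsilon_i}.
\end{align*}

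Now I would collapse the sums from the innermost outward. Fixing $\epsilon_1,\ldots,\epsilon_{n-1}$, the $n$-th factor does not feed back into any earlier factor, so
\begin{align*}
\sum_{\epsilon_n \in \{0,1\}} q_n^{\epsilon_n}(1-q_n)^{1-\epsilon_n} = q_n + (1-q_n) = 1.
\end{align*}
An easy finite induction on $n$ (peeling off $\epsilon_n,\epsilon_{n-1},\ldots,\epsilon_1$ in turn, each time using $q+(1-q)=1$) yields $K_{\kappa,\alpha}(\mu,\bN^*)=1$. The case $n=0$, i.e.\ $\mu=0$, is trivial since the only $\psi \le \mu$ is $\psi=0$ and the associated empty products equal $1$.

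The only place that requires attention is the observation that $p(x,\psi)$ depends on $\psi$ solely through $\psi_x$ (which is immediate from \eqref{ethprob}) and the corresponding use of simplicity of $\mu$ to legitimately parametrize submeasures by $\{0,1\}^n$. Beyond that there is no real obstacle: once the triangular dependence is made explicit, the identity is a one-line telescoping computation, and Lemma \ref{l4.1} supplies the pointwise bound $q_i \in [0,1]$ needed to interpret the factors probabilistically.
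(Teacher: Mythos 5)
Your proof is correct and takes essentially the same route as the paper: both arguments hinge on the key observation that $p(x,\psi)$ depends on $\psi$ only through $\psi_x$, and both peel off the largest point and induct (the paper adds a new maximal point $y$ and splits the sum over $\psi\le\mu+\delta_y$ according to whether $y\in\psi$, while you collapse the iterated binary sum from the innermost index outward — two presentations of the same telescoping computation). The appeal to Lemma \ref{l4.1} is not actually needed for the algebraic identity $q+(1-q)=1$, though it is of course what makes $K_{\kappa,\alpha}$ a genuine probability kernel.
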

{\em Proof:} We use induction on the number of point of $\mu$.
Obviously, $K_{\kappa,\alpha}(0,\mathbf{N}^*)=1$, where
$0$ is the zero measure.

Let $n \in \N$ and $x_1,\dots,x_n,y \in \mathbb{X}$ with
$x_1<\cdots<x_n <y$. Define $\mu:=\delta_{x_1}+\dots+\delta_{x_n}$.
From the definition of $K_{\kappa,\alpha}$ we obtain that
we have
\begin{align*}
K_{\kappa,\alpha}(\mu+\delta_{y},\mathbf{N}^*)
&=\sum_{\psi \le \mu+\delta_y} 
\prod_{x \in \psi} \alpha(x)^{-1} p(x,\psi) \prod_{x \in \mu+\delta_y-\psi}
(1-\alpha(x)^{-1} p(x,\psi))\\
&= \sum_{\psi \le \mu} \Bigg(\prod_{x \in \psi+\delta_y} \alpha(x)^{-1} p(x,\psi+\delta_y) 
\prod_{x\in \mu-\psi}(1-\alpha(x)^{-1} p(x,\psi+\delta_y))\\
&\quad\quad \quad  \quad \quad
+\prod_{x \in \psi} \alpha(x)^{-1} p(x,\psi) \prod_{x \in \mu+\delta_y-\psi}
\left(1-\alpha(x)^{-1} p(x,\psi) \right) \Bigg).
\end{align*}
Since $p(x,\mu)=p(x,\mu_x)$ for all $(x,\mu)\in\BX\times\bN^*$ and
$x_1<\cdots<x_n <y$, we obtain that the above equals
\begin{align*}
  &\sum_{\psi \le \mu}\Bigg(\alpha(y)^{-1} p(y,\psi+\delta_y) 
\prod_{x \in \psi} \alpha(x)^{-1} p(x,\psi) \prod_{x\in \mu-\psi} (1-\alpha(x)^{-1} p(x,\psi))\\
  &\quad\quad \quad +(1-\alpha(y)^{-1} p(y,\psi))\prod_{x \in \psi} \alpha(x)^{-1} 
p(x,\psi) \prod_{x \in \mu-\psi} (1-\alpha(x)^{-1} p(x,\psi)) \Bigg).
\end{align*}
Since $p(y,\psi+\delta_y)=p(y,\psi)$, this equals
\begin{align*}
  \sum_{\psi \le \mu}  \prod_{x \in \psi} \alpha(x)^{-1} p(x,\psi) 
\prod_{x\in \mu-\psi} (1-\alpha(x)^{-1} p(x,\psi))
= K_{\kappa,\alpha}(\mu,\mathbf{N}^*).
\end{align*}
This finishes the induction.\qed

\bigskip

The following thinning representation is the main result
of this section. It extends \cite[Proposition 4.1]{HTHou17}
to general Borel spaces.

\begin{theorem}\label{t2.1} Assume that $\lambda$ is finite and diffuse
and that (Dom2) holds. Then
\begin{align}
\BQ_{\kappa,\alpha}:=\iint \I\{\psi\in\cdot\}\,K_{\kappa,\alpha}(\mu,\mathrm{d}\psi)\,\Pi_{\alpha\lambda}(\mathrm{d}\mu)
\end{align}
is the distribution of a Gibbs process with PI $\kappa$.
\end{theorem}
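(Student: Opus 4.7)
The plan is to show that $\BQ_{\kappa,\alpha}$ has density $Z_\BX^{-1}\,e^{-H(\cdot,0)}$ with respect to $\Pi_\lambda$, whereupon Corollary \ref{c2.3} identifies it as the distribution of a Gibbs process with PI $\kappa$. The hypotheses of that corollary are met: (Dom2) together with $\lambda(\BX)<\infty$ yields the pointwise bound $\kappa_m(x_1,\ldots,x_m,0)\le \prod_{i=1}^m \alpha(x_i)$, and hence $Z_\BX \le \exp\!\bigl(\int\alpha\,\mathrm{d}\lambda - \lambda(\BX)\bigr) < \infty$.

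First I would let $\eta$ be Poisson with intensity $\alpha\lambda$, so $\BQ_{\kappa,\alpha}(f) = \BE[K_{\kappa,\alpha}(\eta,f)]$, and unfold the inner sum $\sum_{\psi\le\eta}$ into an integral against the factorial measures $\eta^{(m)}$. The multivariate Mecke formula then pulls the selected $m$-tuple out of the expectation, the factors $\alpha(x_i)^{-1}p(x_i,\psi_0)\cdot\alpha(x_i)$ collapse to $p(x_i,\psi_0)$, and the remaining discard factor $\prod_{y\in\eta}(1-\alpha(y)^{-1}p(y,\psi_0))$ is handled by the Poisson Laplace functional. With $\psi_0:=\delta_{x_1}+\cdots+\delta_{x_m}$ this produces
\begin{equation*}
\BQ_{\kappa,\alpha}(f) = \sum_{m=0}^{\infty} \frac{1}{m!} \int f(\psi_0)\left(\prod_{i=1}^m p(x_i,\psi_0)\right)\exp\!\left(-\int p(x,\psi_0)\,\lambda(\mathrm{d}x)\right)\lambda^m(\mathrm{d}(x_1,\ldots,x_m)).
\end{equation*}
Comparing this with the Poisson-series expansion of the candidate density, and using symmetry of $\kappa_m$ (from the cocycle), the theorem reduces to the pointwise identity
\begin{equation*}
\prod_{i=1}^m p(x_i,\psi_0)\,\exp\!\left(-\int p(x,\psi_0)\,\lambda(\mathrm{d}x)\right) = \frac{e^{-\lambda(\BX)}\,\kappa_m(x_1,\ldots,x_m,0)}{Z_\BX},
\end{equation*}
which by symmetry need only be verified on the ordered region $x_1<\cdots<x_m$.

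For that identity I would invoke Lebesgue--Stieltjes calculus applied to the function $\phi(x) := Z_{(x,\infty)}((\psi_0)_x)$, $x\in\BX$. The endpoint values are $\phi(-\infty) = Z_\BX$ and $\phi(+\infty) = 1$ (the partition function over the empty set). At each point $x_i$ of $\psi_0$, the very definition of $p$ gives the multiplicative jump $\phi(x_i+)/\phi(x_i) = p(x_i,\psi_0)/\kappa(x_i,(\psi_0)_{x_i})$. Away from $\{x_1,\ldots,x_m\}$, I would split the Poisson process on $(x,\infty)$ into independent pieces on $(x,x+\varepsilon]$ and $(x+\varepsilon,\infty)$, use the Hamiltonian factorisation $e^{-H(\mu_1+\mu_2,\tau)} = e^{-H(\mu_1,\tau)}\,e^{-H(\mu_2,\tau+\mu_1)}$ inherited from the cocycle, and expand to first order in $\lambda((x,x+\varepsilon])$ to obtain the infinitesimal relation $\mathrm{d}\log\phi(x) = (1-p(x,\psi_0))\,\lambda(\mathrm{d}x)$. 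Integrating from $-\infty$ to $+\infty$, collecting the jumps at each $x_i$, exponentiating, and using $\prod_i \kappa(x_i,(\psi_0)_{x_i}) = \kappa_m(x_1,\ldots,x_m,0)$ in the ordered case then delivers the identity.

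The hard part will be carrying out this Lebesgue--Stieltjes step rigorously on the general Borel space $\BX$, equipped only with the total order pulled back from $\varphi\colon\BX\to\R$. One must justify the first-order expansion of $\phi$ on $\psi_0$-free intervals against an arbitrary diffuse $\lambda$, controlling all higher-order terms in the Poisson series for $Z_{(x,\infty)}(\tau)$ via the domination $\kappa_k(y_1,\ldots,y_k,\tau)\le \prod_{j=1}^k \alpha(y_j)$ supplied by (Dom2), and then correctly combine the absolutely continuous part of $\log\phi$ (of $\lambda$-density $1-p(\cdot,\psi_0)$) with the finitely many jumps at the points of $\psi_0$ through the fundamental theorem of Lebesgue--Stieltjes calculus.
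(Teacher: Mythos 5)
Your proof matches the paper's strategy essentially exactly: both reduce Theorem \ref{t2.1} via the multivariate Mecke equation and the Poisson avoidance functional to the pointwise identity $\prod_{i}p(x_i,\psi_0)\exp\bigl[-\int p(\cdot,\psi_0)\,\mathrm{d}\lambda\bigr]=e^{-\lambda(\BX)}\kappa_m(x_1,\ldots,x_m,0)/Z_\BX(0)$ on the ordered region, and both establish that identity by Lebesgue--Stieltjes calculus applied to $x\mapsto Z_{(x,\infty)}(\psi_x)$ (your identity is precisely Corollary \ref{c2.4}, i.e.\ Lemma \ref{l2.2} with $z=-\infty$, $w=\infty$). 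The one place you depart from the paper is how the $\lambda$-absolutely continuous part of $d\log Z_{(x,\infty)}(\psi_x)$ is identified on $\psi_0$-free intervals: you propose an $\varepsilon$-expansion (split the Poisson process over $(x,x+\varepsilon]$ and $(x+\varepsilon,\infty)$, use the Hamiltonian factorisation, retain the first-order term), and you rightly flag controlling the remainder uniformly in $\varepsilon$ as the hard part. The paper sidesteps that entirely by using the Mecke formula to extract the least point of the configuration in $(x,\infty)$, yielding the exact integral equation
\begin{equation*}
Z_x(\psi_x)=e^{-\lambda((x,\infty))}+e^{\lambda((-\infty,x])}\int \I\{y>x\}\,e^{-\lambda((-\infty,y])}\,\kappa(y,\psi_x)\,Z_y(\psi_x+\delta_y)\,\lambda(\mathrm{d}y),
\end{equation*}
which it differentiates directly, so there is no error term to control. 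The paper also first transports everything to $\BX=\R$ via the Borel isomorphism $\varphi$ before invoking Lebesgue--Stieltjes calculus, a small but genuinely necessary step since those tools operate on functions of a real variable. Finally, the paper proves Lemma \ref{l2.2} for general intervals $(z,w]$, a generality not needed for Theorem \ref{t2.1} but reused in the proof of Theorem \ref{tembedding}; your full-line version suffices here. If you replace the heuristic first-order expansion by the Mecke-derived integral equation, your proposal is complete and agrees with the paper's proof.
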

{\em Proof:} The proof will use Corollary \ref{c2.4}, to be proved later.
Let $f\colon\bN^*\to\R_+$ be measurable. Then
\begin{align*}
\iint f(\psi)\,\BQ_{\kappa,\alpha}(\mathrm{d}\psi)
=\sum^\infty_{n=0}\iint\I\{\psi(\BX)=n\}f(\psi)\,K_{\kappa,\alpha}(\mu,\mathrm{d}\psi)\,\Pi_{\alpha\lambda}(\mathrm{d}\mu).
\end{align*}
Since $\alpha\lambda$ is diffuse, a Poisson process with this distribution is
simple (\cite[Proposition 6.9]{LastPenrose17}), so that the above equals
\begin{align*}
\sum^\infty_{n=0}&\iint f(\delta_{x_1}+\cdots+\delta_{x_n})
\I\{x_1<\cdots<x_n\}\prod^n_{i=1}\alpha(x_i)^{-1}p(x_i,\delta_{x_1}+\cdots+\delta_{x_n})\\
&\times\,\prod_{x\in\mu-(\delta_{x_1}+\cdots+\delta_{x_n})}
\big(1-\alpha(x_i)^{-1}p(x,\delta_{x_1}+\cdots+\delta_{x_n})\big)
\,\mu^{(n)}(\mathrm{d}(x_1,\ldots,x_n))\,\Pi_{\alpha\lambda}(\mathrm{d}\mu),
\end{align*}
where, for $n=0$, we interpret $\delta_{x_1}+\cdots+\delta_{x_n}$ as the zero measure.
By the multivariate Mecke equation (\cite[Theorem 4.4]{LastPenrose17}), this equals
\begin{align*}
\sum^\infty_{n=0}&\int f(\delta_{x_1}+\cdots+\delta_{x_n})
\prod^n_{i=1}p(x_i,\delta_{x_1}+\cdots+\delta_{x_n})P(x_1,\ldots,x_n)
\,\lambda_n(\mathrm{d}(x_1,\ldots,x_n)),
\end{align*}
where $\lambda_n$ is the restriction of $\lambda^n$ to $\{x_1<\cdots<x_n\}$ and
\begin{align*}
P(x_1,\ldots,x_n):=\int \prod_{x\in\mu}(1-\alpha(x)^{-1}p(x,\delta_{x_1}+\cdots+\delta_{x_n}))
\,\Pi_{\alpha\lambda}(\mathrm{d}\mu).
\end{align*}
By \cite[Exercise 3.6]{LastPenrose17} we have that
\begin{align*}
P(x_1,\ldots,x_n)
=\exp\Big[-\int p(x,\delta_{x_1}+\cdots+\delta_{x_n})\,\lambda(\mathrm{d}x)\Big],
\end{align*}
so that
\begin{align*}
\int f(\psi)&\,\Pi_{\kappa,\lambda}(\mathrm{d}\psi)=\sum^\infty_{n=0}\int f(\delta_{x_1}+\cdots+\delta_{x_n})
\prod^n_{i=1}p(x_i,\delta_{x_1}+\cdots+\delta_{x_n})\\
&\times\exp\Big[-\int p(x,\delta_{x_1}+\cdots+\delta_{x_n})\,\lambda(\mathrm{d}x)\Big]
\,\lambda_n(\mathrm{d}(x_1,\ldots,x_n)).
\end{align*}
By assumption (Dom2) and Remark \ref{rbounded} all elements
of $\bN^*$ are stable.
Inserting above the result of Corollary \ref{c2.4} and
the definition \eqref{ethprob}, we obtain that
\begin{align*}
\int f(\psi)\,\BQ_{\kappa,\alpha}(\mathrm{d}\psi)&=
Z_{\BX}(0)^{-1}e^{-\lambda(\BX)}
\sum^\infty_{n=0}\int f(\delta_{x_1}+\cdots+\delta_{x_n})\kappa_n(x_1,\ldots,x_n,0)\\
&\quad\times \I\{x_1<\cdots<x_n\}\,\lambda^n(\mathrm{d}(x_1,\ldots,x_n)),
\end{align*}
where $\kappa_0\equiv 1$. Since $\lambda$ is diffuse
and $\kappa_n(\cdot,0)$ is symmetric $\lambda^n$-a.e., we obtain that
\begin{align*}
\int f(\psi)\,\BQ_{\kappa,\lambda}(\mathrm{d}\psi)
=Z_{\BX}(0)^{-1}\int f(\mu)e^{-H(\mu,0)}\,\Pi_{\lambda}(\mathrm{d}\mu).
\end{align*}
Therefore the assertion follows from Corollary \ref{c2.3}.\qed

\bigskip

The following lemma provides useful information on the function $p$.
It does not require assumption (Dom2).
Note that $\psi_{-\infty}=0$ and $Z_\emptyset(\psi)=1$
for each $\psi\in\bN^*$. We set $(\infty,\infty):=\emptyset$.

\begin{lemma}\label{l2.2} Suppose that $\psi\in\bN^*$ is stable 
and let $z,w\in\BX\cup\{-\infty,\infty\}$ such that $z<w$. Then
\begin{align}\label{e2.2}
\int \I\{x\in (z,w]\}p(x,\psi)\,\lambda(\mathrm{d}x)
&=\lambda(z,w]+\log Z_{(z,\infty)}(\psi_{(-\infty,z]})
-\log Z_{(w,\infty)}(\psi_{(-\infty,w)})\\\notag
  &\quad+\int\big(\log Z_{(x,\infty)}(\psi_x+\delta_x)-\log Z_{(x,\infty)}(\psi_x)\big)
\,\psi_{(z,w)}(\mathrm{d}x).
\end{align}
\end{lemma}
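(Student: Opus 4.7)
I plan to introduce $G(x):=Z_{(x,\infty)}(\psi_x)$; by stability of $\psi$ and the trivial bound $G(x)\ge e^{-\lambda(\BX)}>0$, $G$ is finite, positive, and bounded away from zero. Diffuseness of $\lambda$ makes $G$ left-continuous with right-limits, continuous at every $x$ with $\psi(\{x\})=0$, with a right-jump at each $y\in\psi$ from $G(y)=Z_{(y,\infty)}(\psi_y)$ to $G(y^+)=Z_{(y,\infty)}(\psi_y+\delta_y)$. Since $\log Z_{(z,\infty)}(\psi_{(-\infty,z]})=\log G(z^+)$ and $\log Z_{(w,\infty)}(\psi_{(-\infty,w)})=\log G(w)$, the identity \eqref{e2.2} is equivalent to
\[
\log G(w)-\log G(z^+)=\int_{(z,w]}\bigl(1-p(x,\psi)\bigr)\,\lambda(\mathrm{d}x)+\sum_{y\in\psi,\,z<y<w}\bigl[\log G(y^+)-\log G(y)\bigr],
\]
that is, to the Lebesgue--Stieltjes decomposition of $\mathrm{d}\log G$ on $(z,w]$ into its continuous and jump parts.

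The crux is the following local identity: whenever $a<b$ with $\psi((a,b))=0$, so that $\psi_u=\nu:=\psi_{(-\infty,a]}$ is constant for $u\in(a,b]$,
\[
G(b)-G(a^+)=\int_{(a,b]}G(u)\bigl(1-p(u,\psi)\bigr)\,\lambda(\mathrm{d}u).
\]
I would prove this by applying the Mecke equation to a Poisson process $\eta$ on $(a,\infty)$ with intensity $\lambda$. Using $\kappa(u,\nu)Z_{(u,\infty)}(\nu+\delta_u)=\BE\bigl[e^{-H(\delta_u+\eta\cap(u,\infty),\nu)}\bigr]$ and $G(u)=\BE\bigl[e^{-H(\eta\cap(u,\infty),\nu)}\bigr]$, Mecke converts each $\lambda$-integral over $(a,b]$ into an expected sum over the atoms of $\eta\cap(a,b]=\{y_1<\cdots<y_m\}$. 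Setting $\eta_i:=\eta\cap(y_i,\infty)$ with $y_0:=a$, these sums read $\sum_{i=1}^m e^{-H(\delta_{y_i}+\eta_i,\nu)}=\sum_{i=0}^{m-1}e^{-H(\eta_i,\nu)}$ and $\sum_{i=1}^m e^{-H(\eta_i,\nu)}$ respectively; their difference telescopes to $e^{-H(\eta_m,\nu)}-e^{-H(\eta_0,\nu)}$, whose expectation is exactly $G(b)-G(a^+)$.

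From the local identity, the Lebesgue differentiation theorem yields that $G$ is absolutely continuous on every interval on which $\psi$ has no atom, with density $G(u)(1-p(u,\psi))$ with respect to $\lambda$. Since $G$ is bounded away from zero, the chain rule promotes this to absolute continuity of $\log G$ with density $1-p(\cdot,\psi)$, hence $\log G(b)-\log G(a^+)=\int_{(a,b]}(1-p(u,\psi))\,\lambda(\mathrm{d}u)$. To finish, enumerate the atoms of $\psi$ in $(z,w)$ as $y_1<\cdots<y_n$, set $y_0:=z$ and $y_{n+1}:=w$, apply this no-atom identity on each sub-interval $(y_i,y_{i+1}]$, and sum; the left-hand sides telescope to $\log G(w)-\log G(z^+)-\sum_{i=1}^n[\log G(y_i^+)-\log G(y_i)]$, giving \eqref{e2.2} upon rearrangement. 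The main obstacle is the Lebesgue-differentiation step—passing from the integrated local identity to absolute continuity of $\log G$—which is standard but critically uses the uniform positivity $G\ge e^{-\lambda(\BX)}$ coming from stability.
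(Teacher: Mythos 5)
Your proof is correct and reaches the same structure as the paper's -- reduce to gaps between $\psi$-atoms, prove a local identity there, apply a chain rule for $\log G$ with $G(x):=Z_{(x,\infty)}(\psi_x)$, and patch across the atoms -- but the route to the local identity is genuinely different and, I think, cleaner. The paper first derives a renewal-type integral equation for $G$ (its \eqref{e2.7}), obtained by conditioning the Poisson partition-function expansion on the smallest point, and then applies the Lebesgue--Stieltjes product rule to turn that integral equation into the differential form $\mathrm{d}G = G\,\mathrm{d}\lambda - \kappa(\cdot,\psi_\cdot)Z_\cdot(\psi_\cdot+\delta_\cdot)\,\mathrm{d}\lambda$. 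You instead get the integrated form $G(b)-G(a^+)=\int_{(a,b]}G(1-p)\,\mathrm{d}\lambda$ in one stroke, by writing both integrands as Poisson expectations, pulling them through Mecke, and observing that the resulting sums over atoms telescope via the index shift $\delta_{y_i}+\eta_i=\eta_{i-1}$. This bypasses the product-rule differentiation entirely, so you avoid the most delicate piece of Lebesgue--Stieltjes calculus in the paper; both proofs still need the $\log$-chain-rule step (the paper's Corollary A4.11 in Last--Brandt), which you correctly justify via the uniform lower bound $G\ge e^{-\lambda(\BX)}$. Two minor points of precision worth tightening: the phrase ``the Lebesgue differentiation theorem yields that $G$ is absolutely continuous'' is redundant, since your local identity is already the statement of absolute continuity of $G$ with density $G(1-p)$ -- the only nontrivial remaining step is the chain rule for $\log$; and, like the paper, you should at some point pass to $\BX=\R$ via the Borel isomorphism $\varphi$ (or otherwise make precise what ``absolute continuity on $(a,b]$'' and the chain rule mean when the total order on $\BX$ is the exotic $\varphi$-induced one). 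The paper does this reduction explicitly before its calculus step; your telescoping argument itself is order-agnostic, but the chain-rule step is most cleanly phrased after transferring to the real line.
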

{\em Proof:} For $x\in\BX$ and $\psi\in\bN^*$ we abbreviate
$Z_x(\psi):=Z_{(x,\infty)}(\psi)$. 
Since $\lambda$ is diffuse, we have $\Pi_\lambda(\{\mu\in\bN:\mu(\{x\})\ge 1\})=0$.
Therefore we obtain from definition \eqref{epartition}
 that $Z_x(\psi)=Z_{[x,\infty)}(\psi)$. Since $\psi$ is stable, this is a
finite number. In the following we write
$\lambda(x,\infty):=\lambda((x,\infty))$ and similar for other intervals.
By definition,
\begin{align*}
Z_x(\psi_x)&=e^{-\lambda(x,\infty)}
+\sum^\infty_{n=1}\int e^{-H(\mu_{(x,\infty)},\psi_x)}\I\{\mu(x,\infty)=n\}\,\Pi_\lambda(\mathrm{d}\mu)\\
&=e^{-\lambda(x,\infty)}
+\sum^\infty_{n=1}\iint e^{-H(\mu_{[y,\infty)},\psi_x)}
\I\{y\in(x,\infty)\}\\
&\quad\times\I\{\mu(x,y)=0,\mu(y,\infty)=n-1\}\,\mu(\mathrm{d}y)\,\Pi_\lambda(\mathrm{d}\mu),
\end{align*}
where we have used that $\Pi_\lambda$ is the distribution of a simple
point process. By the Mecke equation this equals
\begin{align*}
e^{-\lambda(x,\infty)}
+\sum^\infty_{n=1}&\iint e^{-H(\mu_{(y,\infty)}+\delta_y,\psi_x)}
\I\{y\in(x,\infty)\}\\
&\quad\times\I\{\mu(x,y)=0,\mu(y,\infty)=n-1\}\,\Pi_\lambda(\mathrm{d}\mu)\,\lambda(\mathrm{d}y),
\end{align*}
where we have used that a Poisson process with a diffuse
intensity measure does not have fixed atoms.
Using the complete independence of a Poisson process and then
the formula for empty space probablities, we obtain that
\begin{align*}
Z_x(\psi_x)=e^{-\lambda(x,\infty)}
+\iint \I\{y\in(x,\infty)\}e^{-\lambda(x,y)} e^{-H(\mu_{(y,\infty)}+\delta_y,\psi_x)}
\,\Pi_\lambda(\mathrm{d}\mu)\,\lambda(\mathrm{d}y).
\end{align*}
By the definition of the Hamiltonian we have that
\begin{align*}
e^{-H(\mu_{(y,\infty)}+\delta_y,\psi_x)}=\kappa(y,\psi_x)e^{-H(\mu_{(y,\infty)},\psi_x+\delta_y)},
\end{align*}
so that
\begin{align}\label{e2.7}
Z_x(\psi_x)=e^{-\lambda(x,\infty)}
+e^{\lambda(x)}\int \I\{y\in(x,\infty)\}
e^{-\lambda(y)}\kappa(y,\psi_x) Z_y(\psi_x+\delta_y)\,\lambda(\mathrm{d}y),
\end{align}
where $\lambda(y):=\lambda(-\infty,y]$.

We now argue that we can assume that $\BX=\R$.
Consider the Borel isomorphism
$\varphi\colon \BX\to U:=\varphi(\BX)$, interpreted as a mapping
from $\BX$ to $\R$.
We define a diffuse measure $\lambda'$ on $\R$
by $\lambda':=\lambda\circ\varphi^{-1}$ (the image of $\lambda$ under
$\varphi$). Recall that $\bN^*(\R)$ denotes the set of all finite and simple counting
measures on $\R$ and define
a measurable mapping $\kappa'\colon \R\times \bN^*(\R)\to\R_+$
by $\kappa'(u,\mu):=\I\{u\in U\}\kappa(\varphi^{-1}(u),\mu_U\circ\varphi)$.
(If $\mu_U=\delta_{s_1}+\cdots+\delta_{s_n}$ then
$\mu_U\circ\varphi=\delta_{\varphi^{-1}(s_1)}+\cdots+\delta_{\varphi^{-1}(s_n)})$.)
Then $\kappa'$ satisfies the cocycle assumption \eqref{ecocycle}
and we denote the associated partition functions by
$Z'_B$ for $B\in\mathcal{B}(\R)$ (the system of Borel subsets of $\R$).
It is easy to show that
\begin{align}\label{e2.57}
Z'_B(\mu)=Z_{\varphi^{-1}(B)}(\mu_U\circ\varphi),\quad \mu\in \bN^*(\R),\,B\in \mathcal{B}(\R).
\end{align}
Let  $\mu:=\psi\circ \varphi^{-1}$ and suppose that \eqref{e2.2}
holds with $(\BX,\lambda,\kappa,\psi)$ replaced with
$(\R,\lambda',\kappa',\mu)$. This means
that for all $s,t\in\R\cup\{-\infty,\infty\}$
\begin{align*}
\int \I\{u\in (s,t]\}p'(u,\mu)\,\lambda'(\mathrm{d}u)
&=\lambda'(s,t]+\log Z'_{(s,\infty)}(\mu_{(-\infty,s]})
-\log Z'_{(t,\infty)}(\mu_{(-\infty,s]})\\\notag
&\quad+\int\big(\log Z'_{(u,\infty)}(\mu_u+\delta_u)-\log Z'_{(u,\infty)}(\mu_u)\big)\,\mu_{(s,t)}(\mathrm{d}u),
\end{align*}
where $p'$ is defined in terms of $\kappa'$ as $p$ in terms of $\kappa$.
Applying this formula with $(s,t):=(\varphi(z),\varphi(w))$ 
(where $(\varphi(-\infty),\varphi(\infty)):=(-\infty,\infty)$) and using
\eqref{e2.57}, yields \eqref{e2.2}.

From now on we will assume that $\BX=\R$. 
$a\in\R\cup\{-\infty,\infty\}$ we set $\psi^+_a:=\psi_{[a,\infty)}$.
Let $a,b\in\R\cup\{-\infty,\infty\}$ such that
$\psi(a,b)=0$. Then 
\begin{align*}
\psi_x=\psi^+_a,\quad x\in(a,b].
\end{align*}
Since the left-hand side of \eqref{e2.7} is finite,
so is the right-hand side.
This shows that the function $x\mapsto f(x):=Z_x(\psi_x)$
from $(a,b]$ to $\R$ is continuous and of totally bounded
variation. Now we use Lebesgue-Stieltjes calculus;
see e.g.\ \cite[Appendix A4]{LastBrandt95}.
By the product rule and $d_xe^{-\lambda(x,\infty)}=-e^{-\lambda(x,\infty)}\lambda(\mathrm{d}x)$
we have that
\begin{align*}
df(x)&=e^{-\lambda(x,\infty)}\lambda(\mathrm{d}x)
+e^{\lambda(x)}\bigg[\int^{\infty}_x 
e^{-\lambda(y)}\kappa(y,\psi^+_a) Z_y(\psi^+_a+\delta_y)\,\lambda(\mathrm{d}y)\bigg]\,\lambda(\mathrm{d}x)\\
&\quad-\kappa(x,\psi_a) Z_x(\psi^+_a+\delta_x)\,\lambda(\mathrm{d}x),
\end{align*}
where we note that $\kappa(x,\psi_a) Z_a(\psi^+_a+\delta_x)<\infty$
for $\lambda$-a.e.\ $x\in[a,b]$. Therefore,
\begin{align*}
df(x)&=e^{-\lambda(x,\infty)}\lambda(\mathrm{d}x)+\big(f(x)-e^{-\lambda(x,\infty)}\big)\,\lambda(\mathrm{d}x)
-\kappa(x,\psi_x) Z_x(\psi^+_a+\delta_x)\,\lambda(\mathrm{d}x)\\
&=f(x)\,\lambda(\mathrm{d}x)-\kappa(x,\psi_a) Z_x(\psi^+_a+\delta_x)\,\lambda(\mathrm{d}x),
\end{align*}
that is
\begin{align}\label{ediif}
d Z_x(\psi_x)=Z_x(\psi_x)\,\lambda(\mathrm{d}x)
-\kappa(x,\psi_x) Z_x(\psi_x+\delta_x)\,\lambda(\mathrm{d}x),\quad \text{on $(a,b]$}.
\end{align}
Therefore we obtain from the definition \eqref{ethprob} that
\begin{align}\label{e2.13}\notag
\int^b_{a} p(x,\psi)\,\lambda(\mathrm{d}x)&=\lambda(a,b]-\int^b_{a} Z_x(\psi_x)^{-1}\,d Z_x(\psi_x)\\
&=\lambda(a,b]+\log Z_a(\psi^+_a)-\log Z_b(\psi^+_a),
\end{align}
where we have used \cite[Corollary A4.11]{LastBrandt95}.
Note that
\begin{align*}
\lim_{z\to -\infty}Z_z(\psi^+_{z})=Z_\BX(0),\qquad 
\lim_{z\to \infty} Z_z(\psi^+_z)=1.
\end{align*}

There exist $n\in\N_0$ and $x_1,\ldots,x_n\in\BX$
such that $x_1<\cdots <x_n$ and $\psi=\delta_{x_1}+\cdots+\delta_{x_n}$.
If $n=0$, then the assertion \eqref{e2.2} follows from \eqref{e2.13}. 
Hence we can assume that $n\ge 1$. 
By \eqref{e2.13} we have for each $i\in\{0,\ldots,n\}$,
\begin{align}\label{e2.14}
\int^{x_{i+1}}_{x_i} p(x,\psi)\,\lambda(\mathrm{d}x)
=\lambda(x_i,x_{i+1}]+\log Z_{x_i}(\psi_{(-\infty,x_i]})
-\log Z_{x_{i+1}}(\psi_{(-\infty,x_i]}), 
\end{align}
where $x_0:=-\infty$ and $x_{n+1}:=\infty$.

Let $z,w\in\R$ with $z<w$. Then there exist $j,k\in\{0,\ldots,n\}$
with $j\le k$ such that $z\in [x_j,x_{j+1})$ and $w\in [x_k,x_{k+1})$.
For $i\in\{0,\ldots,n\}$ we set $\psi^i:=\delta_{x_1}+\cdots+\delta_{x_i}=\psi_{(-\infty,x_i]}$.
Let us first assume that $x_k<w$. 
From \eqref{e2.13} and \eqref{e2.14} we then derive that 
\begin{align*}
\int^w_z &p(x,\psi)\,\lambda(\mathrm{d}x)
=\int^{x_{j+1}}_zp(x,\psi)\,\lambda(\mathrm{d}x)
+\sum^{k-1}_{i=j+1}\int^{x_{i+1}}_{x_i}p(x,\psi)\,\lambda(\mathrm{d}x)
+\int^w_{x_k}p(x,\psi)\,\lambda(\mathrm{d}x)\\
&=\lambda(z,w]+\log Z_{z}(\psi^j)-\log Z_{x_{j+1}}(\psi^{j})\\
&\quad+\sum^{k-1}_{i=j+1}\log Z_{x_i}(\psi^{i})
-\sum^{k-1}_{i=j+1}\log Z_{x_{i+1}}(\psi^{i})+\log Z_{x_{k}}(\psi^{k})
-\log Z_{w}(\psi^{k}).
\end{align*}
Reordering terms yields,
\begin{align*}
\int^w_z &p(x,\psi)\,\lambda(\mathrm{d}x)\\
&=\lambda(z,w]+\log Z_z(\psi^+_z)
-\log Z_w(\psi_w)
+\sum^{k}_{i=j+1}\log Z_{x_i}(\psi^{i})
-\sum^{k-1}_{i=j}\log Z_{x_{i+1}}(\psi^{i})\\
&=\lambda(z,w]+\log Z_z(\psi^+_z)-\log Z_w(\psi_w)
+\sum^{k}_{i=j+1}\big(\log Z_{x_i}(\psi^{i})-\log Z_{x_i}(\psi^{i-1})\big).
\end{align*}
This is equivalent to \eqref{e2.2}. If $w=x_k$ we have that
\begin{align*}
  \int^w_z &p(x,\psi)\,\lambda(\mathrm{d}x)
=\lambda(z,w]+\log Z_{z}(\psi^j)
+\sum^{k-1}_{i=j+1}\log Z_{x_i}(\psi^{i})
-\sum^{k-1}_{i=j}\log Z_{x_{i+1}}(\psi^{i})\\
&=\lambda(z,w]+\log Z_z(\psi^j)-\log Z_{x_k}(\psi^{k-1})
+\sum^{k-1}_{i=j+1}\big(\log Z_{x_i}(\psi^{i})-\log Z_{x_{i+1}}(\psi^{i})\big),
\end{align*}
which is again equivalent to \eqref{e2.2}.
\qed

\bigskip

The following corollary has been crucial in the proof of Theorem \ref{t2.1}.

\begin{corollary}\label{c2.4} 
Suppose that $\psi\in\bN^*$ is stable. Then
\begin{align}\label{exp123}
\exp\bigg[-\int p(x,\psi)\,\lambda(\mathrm{d}x)\bigg]
=e^{-\lambda(\BX)}Z_{\BX}(0)^{-1}
\prod_{y\in\psi}\frac{Z_{(y,\infty)}(\psi_y)}{Z_{(y,\infty)}(\psi_y+\delta_y)}.
\end{align}
In particular $\int p(x,\psi)\,\lambda(\mathrm{d}x)<\infty$.
\end{corollary}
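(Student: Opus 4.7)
The plan is to derive the identity by a single application of Lemma~\ref{l2.2} with the extreme choice $z:=-\infty$ and $w:=\infty$, together with the conventions $\psi_{-\infty}=0$ and $Z_\emptyset(\psi)=1$ recorded just before that lemma.

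With this choice, the interval $(z,w]=(-\infty,\infty]$ coincides with $\BX$, so the left-hand side of \eqref{e2.2} is precisely $\int p(x,\psi)\,\lambda(\mathrm{d}x)$ and the first term on the right is $\lambda(\BX)$. The boundary partition function at $z=-\infty$ simplifies to $Z_{(-\infty,\infty)}(\psi_{(-\infty,-\infty]})=Z_{\BX}(0)$ since $\psi_{(-\infty,-\infty]}=0$, while the boundary term at $w=\infty$ becomes $Z_{(\infty,\infty)}(\psi_{(-\infty,\infty)})=Z_\emptyset(\psi)=1$, whose logarithm vanishes. Finally, $\psi_{(z,w)}=\psi_{(-\infty,\infty)}=\psi$, so the remaining integral collapses to a sum
\[
\sum_{y\in\psi}\big(\log Z_{(y,\infty)}(\psi_y+\delta_y)-\log Z_{(y,\infty)}(\psi_y)\big)
\]
over the finitely many points of $\psi\in\bN^*$.

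Putting these four pieces together gives
\[
\int p(x,\psi)\,\lambda(\mathrm{d}x)=\lambda(\BX)+\log Z_{\BX}(0)+\sum_{y\in\psi}\log\frac{Z_{(y,\infty)}(\psi_y+\delta_y)}{Z_{(y,\infty)}(\psi_y)},
\]
and multiplying by $-1$ and exponentiating yields \eqref{exp123}. For the finiteness claim, note that stability of $\psi$ together with $\lambda(\BX)<\infty$ (in force throughout this section) forces each $Z_{(y,\infty)}(\psi_y)$ and each $Z_{(y,\infty)}(\psi_y+\delta_y)$ to lie in the interval $[e^{-\lambda(\BX)},\infty)$, and in particular to be strictly positive and finite; combined with $Z_{\BX}(0)\in(0,\infty)$ and the fact that $\psi(\BX)<\infty$, the right-hand side of \eqref{exp123} is a strictly positive and finite number, so $\int p(x,\psi)\,\lambda(\mathrm{d}x)<\infty$.

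There is essentially no obstacle beyond being careful with the conventions at $\pm\infty$; the substantive content of Corollary~\ref{c2.4} is entirely absorbed by Lemma~\ref{l2.2}, and the only thing to check is that the telescoping structure set up in that lemma terminates correctly at the two extreme endpoints.
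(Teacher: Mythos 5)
Your proof is correct and follows exactly the route taken in the paper: instantiate Lemma~\ref{l2.2} with $z=-\infty$, $w=\infty$, identify the boundary terms using the conventions $\psi_{-\infty}=0$ and $Z_\emptyset(\cdot)=1$, rearrange, and then deduce finiteness of $\int p(x,\psi)\,\lambda(\mathrm{d}x)$ from strict positivity of the right-hand side of \eqref{exp123}. One small remark on your finiteness step: the finiteness of the denominator $Z_{(y,\infty)}(\psi_y+\delta_y)$ follows from stability applied to $B=(y,\infty)$ (since $\psi_{B^c}=\psi_{(-\infty,y]}=\psi_y+\delta_y$ when $y\in\psi$), which is what the paper records; the finiteness of the numerator $Z_{(y,\infty)}(\psi_y)$ requires the additional observation that, because $\lambda$ is diffuse, $Z_{(y,\infty)}(\psi_y)=Z_{[y,\infty)}(\psi_{[y,\infty)^c})$, after which stability with $B=[y,\infty)$ applies. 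You state the finiteness of both without distinguishing the two cases; it is worth noting that only strict positivity of the numerator (which is automatic, since every $Z_B$ is bounded below by $e^{-\lambda(B)}$) and finiteness of the denominator are actually needed to conclude $\int p\,\lambda<\infty$.
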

{\em Proof:} Taking in Lemma \ref{l2.2} $z=-\infty$ and $w=\infty$
yields the first assertion.
To prove the second, we take $y\in\psi$. Then
$Z_{(y,\infty)}(\psi_y+\delta_y)=Z_{(y,\infty)}(\psi_{(-\infty,y]})$.
Since $\psi$ is stable, this is finite.
Therefore the right-hand side of \eqref{exp123} does not vanish and the result
follows. \qed

\section{Poisson embedding of finite Gibbs processes}\label{secembedding}

As in Section \ref{secthinning} we consider a diffuse 
finite measure $\lambda$ on $\BX$ and a measurable function
$\kappa\colon\BX\times\bN\to\R_+$, satisfying the cocycle assumption
\eqref{ecocycle} for all $(x,y,\mu)\in\BX^2\times\bN$.
In this section we construct a (finite) Gibbs process by a recursively 
defined embedding 
into a Poisson process on $\BX\times\R_+$ with intensity
measure $\lambda\otimes\lambda_1$, where $\lambda_1$ denotes
Lebesgue measure on $\R_+$. For (marked) point processes 
on $\R_+$ this embedding technique is well-known;
see \cite{BreMass96}. To the best of our knowledge it has
never been used in a spatial setting.
As in Theorem \ref{t2.1} we use the function $p$ defined by
\eqref{ethprob}. However, we do not need the local stability
assumption (Dom2).
Even if this assumption holds, we find it more convenient
to work with embedding rather than with a probabilistic thinning
version of Theorem \ref{t2.1}.

Let $\bN^*_\ell(\BX\times\R_+)$ denote the space of all
simple counting measures $\psi$ on $\BX\times\R_+$ such
that $\psi(B)<\infty$ for each measurable $B\subset\BX\times \R_+$ 
with $(\lambda\otimes\lambda_1)(B)<\infty$. (Again we equip this space
with the standard $\sigma$-field.) Recall that
$\bN^*$ denotes the space of simple and finite counting measures
on $\BX$ and note that the elements of
$\bN^*_\ell(\BX\times\R_+)$ are not assumed to be finite.
Using the total ordering on $\BX$
we define for each $n\in\N$
a mapping $x_n(\cdot)\colon \bN^*_\ell(\BX\times\R_+)\to \BX\cup\{-\infty,\infty\}$
as follows. Let $\psi\in \bN^*_\ell(\BX\times\R_+)$. If $\int p(x,0)\,\lambda(\mathrm{d}x)=\infty$, then we set
$x_1(\psi):=-\infty$.
If $\int p(x,0)\,\lambda(\mathrm{d}x)<\infty$ then 
\begin{align*}
\psi(\{(x,t)\in \BX\times\R_+:t\le p(x,0)\})<\infty 
\end{align*}
and we set
\begin{align}\label{ex1psi}
x_1(\psi):=\min\{x\in\BX:\text{there exists $t\ge 0$ such that $(x,t)\in\psi$ and $t\le p(x,0)$}\},
\end{align}
where $\min\emptyset:=\infty$.
Inductively we define $x_n(\psi)$ for all $n\in\N$. If 
$x_n(\psi)\notin\BX$ then we set $x_{n+1}(\psi):=x_n(\psi)$.
If $x_n(\psi)\in\BX$ and 
\begin{align*}
\int \I\{x_n(\psi)<x\}p(x,\delta_{x_1(\psi)}+\cdots+\delta_{x_n(\psi)})\,\lambda(\mathrm{d}x)=\infty,
\end{align*}
then we set $x_{n+1}(\psi):=-\infty$. Otherwise we define
\begin{align}\label{exnpsi}
&x_{n+1}(\psi)\\ \notag
&:=\min\{x>x_n(\psi):\text{there exists $t\ge 0$ s.t.\ $(x,t)\in\psi$ and 
$t\le p(x,\delta_{x_1(\psi)}+\cdots+\delta_{x_n(\psi)})$}\}.
\end{align}
Define the {\em embedding operator} $T\colon \bN^*_\ell(\BX\times\R_+)\to\bN^*(\BX)$ by
\begin{align}\label{eTpsi}
T(\psi):=\I\{\tau(\psi)<\infty\}
\sum^{\tau(\psi)}_{n=1}\delta_{x_n(\psi)},
\end{align}
where $\tau(\psi):=\sup\{n\ge 1: x_n(\psi)\in\BX\}$.
Equation \eqref{eTmeas} below provides an alternative
representation of $T$, which shows that $T$ is measurable.

\begin{theorem}\label{tembedding} Assume that $\lambda$ is finite
and diffuse and that $\Pi_\lambda$-a.e.\ $\psi\in\bN^*$ are stable.
Assume that $\Phi$ is a Poisson process on $\BX\times\R_+$ with
intensity measure $\lambda\otimes\lambda_1$. Then $T(\Phi)$ is a Gibbs process
with PI $\kappa$.
\end{theorem}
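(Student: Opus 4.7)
The strategy is to compute $\BE[f(T(\Phi))]$ for an arbitrary measurable $f\colon\bN^*\to\R_+$, decompose the expectation according to $\tau(\Phi)=n$, simplify using Corollary \ref{c2.4}, and match the resulting formula against \eqref{edlr2} so that Corollary \ref{c2.3} identifies the law of $T(\Phi)$ as a Gibbs process with PI $\kappa$.

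The first main step is to derive the joint density of $(x_1(\Phi),\ldots,x_n(\Phi))$ on $\{\tau(\Phi)=n\}$. Fix an increasing tuple $x_1<\cdots<x_n$ and write $\psi^i:=\delta_{x_1}+\cdots+\delta_{x_i}$ with $\psi^0:=0$. By the recursive definition \eqref{ex1psi}--\eqref{exnpsi}, this event requires (i) an atom of $\Phi$ at each $(x_i,t_i)$ with $t_i\le p(x_i,\psi^{i-1})$, and (ii) no atom of $\Phi$ below the curve $p(\cdot,\psi^{i-1})$ on the interval $(x_{i-1},x_i)$ for each $i\le n$, nor below $p(\cdot,\psi^n)$ on $(x_n,\infty)$. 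Since $p(y,\psi)$ depends on $\psi$ only through $\psi_y$, the forbidden region is, up to $\lambda\otimes\lambda_1$-null sets, simply $\{(y,t):t\le p(y,\psi^n)\}$. Combining the multivariate Mecke formula for the $n$ selected atoms with the Poisson void probability of the forbidden region yields
\begin{align*}
\BP\big(x_i(\Phi)\in\mathrm{d}x_i,\ i\le n,\ \tau(\Phi)=n\big)=\Big(\prod_{i=1}^{n} p(x_i,\psi^{i-1})\Big)\exp\Big[-\int p(y,\psi^n)\,\lambda(\mathrm{d}y)\Big]\,\lambda^n(\mathrm{d}(x_1,\ldots,x_n)).
\end{align*}

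In the second step, apply Corollary \ref{c2.4} to $\psi=\psi^n$ (stable on the support of the density above) to rewrite the exponential as $e^{-\lambda(\BX)}Z_\BX(0)^{-1}\prod_{i=1}^{n}Z_{(x_i,\infty)}(\psi^{i-1})/Z_{(x_i,\infty)}(\psi^{i-1}+\delta_{x_i})$. Multiplying by the definition \eqref{ethprob} of each $p(x_i,\psi^{i-1})$, the partition-function ratios telescope, leaving $\prod_{i=1}^{n}\kappa(x_i,\psi^{i-1})=\kappa_n(x_1,\ldots,x_n,0)$. Summing over $n$ and using that $\kappa_n(\cdot,0)$ is $\lambda^n$-a.e.\ symmetric by \eqref{esymmetric} together with the diffuseness of $\lambda$ converts the indicator $\I\{x_1<\cdots<x_n\}$ into the Poisson factor $1/n!$, giving
\begin{align*}
\BE[f(T(\Phi))]=Z_\BX(0)^{-1}\int f(\mu)e^{-H(\mu,0)}\,\Pi_\lambda(\mathrm{d}\mu).
\end{align*}
Corollary \ref{c2.3} then identifies this as the law of a Gibbs process with PI $\kappa$, noting that $Z_\BX(0)<\infty$ follows from the hypothesis because $\Pi_\lambda(\{0\})=e^{-\lambda(\BX)}>0$ forces $0$ itself to be stable.

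The main obstacle is the rigorous derivation of the joint density in step one. The cleanest route is induction on $n$: the inductive step applies the Mecke equation to the $i$-th selected atom and then invokes the fact that, conditionally on that atom, the restriction of $\Phi$ to the complement of the already-processed region remains Poisson with intensity $\lambda\otimes\lambda_1$. A subsidiary but necessary check is that the recursion terminates almost surely, i.e.\ that $\{\tau(\Phi)=\infty\}$ and the exit events $\{x_{n+1}(\Phi)=-\infty\}$ collectively carry zero probability; this is partly an a posteriori consequence of the derived density summing to $1$, but can also be extracted directly from the hypothesis, since Corollary \ref{c2.4} forces $\int p(y,\psi^n)\lambda(\mathrm{d}y)<\infty$ on the stable configurations that carry the full mass.
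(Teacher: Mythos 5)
Your proposal is correct and follows essentially the same route as the paper's proof: multivariate Mecke equation plus Poisson void probabilities to get the density on $\{\tau(\Phi)=n\}$, then Corollary~\ref{c2.4} and telescoping of the partition-function ratios against the definition~\eqref{ethprob} of $p$, followed by symmetrization and Corollary~\ref{c2.3}. Your shortcut of collapsing the union of the regions below $p(\cdot,\psi^{i-1})$ on $(x_{i-1},x_i)$ into the single region below $p(\cdot,\psi^n)$, by the observation $p(y,\psi)=p(y,\psi_y)$, is a neat simplification; the paper instead keeps the decomposition over the sets $A_i$ and invokes Lemma~\ref{l2.2} for each piece before reducing to Corollary~\ref{c2.4}, which amounts to the same telescoping. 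The one place where the paper is more careful than your sketch is the termination/exit question: the paper proves $\BP(x_n(\Phi)=-\infty)=0$ for every $n$ directly (using the Mecke equation and the a.e.\ finiteness from Corollary~\ref{c2.4}) before computing $\BE[f(T(\Phi))]$, whereas you lean on the a posteriori argument that the resulting density integrates to~$1$; that argument does work, but if you intend to rely on it you should state it explicitly as the normalization check, and you should also note (as the paper does via~\eqref{eTmeas}) why $T$ is measurable in the first place.
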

{\em Proof:} Given $n\in\N_0$, $x_1,\ldots,x_n\in\BX$ and
$i\in\{0,\ldots,n\}$ we define
\begin{align*}
A_i(x_1,\ldots,x_n):=\{(x,t)\in\BX\times\R_+:x_i<x<x_{i+1}, t\le p(x,\delta_{x_1}+\cdots+\delta_{x_i})\},
\end{align*}
where we set $x_0:=-\infty$ and $x_{n+1}:=\infty$ with the convention
that $-\infty<y<\infty$ for each $y\in\BX$. 
For $n\in\N_0$ we set $B_n(x_1,\ldots,x_n):=\cup^n_{i=0}A_i(x_1,\ldots,x_n)$.
Let $H:=\{\psi\in \bN^*(\BX):\int p(x,\psi)\,\lambda(\mathrm{d}x)<\infty\}$ and
define
\begin{align*}
C_n:=\{(x_1,\ldots,x_n)\in\BX^n:\delta_{x_1}\in H,\delta_{x_1}+\delta_{x_2}\in H,
\ldots,\delta_{x_1}+\cdots+\delta_{x_n}\in H\},
\end{align*}
where $C_0:=\I\{0\in H\}$.
By definition of $T$ we have for all $B\in\cX$
\begin{align}\label{eTmeas}
&T(\psi)(B)=\sum^\infty_{n=1}\sum^n_{i=1}\int
\I\{t_1\le p(x_1,0),\ldots,t_n\le p(x_n,\delta_{x_1}+\cdots+\delta_{x_{n-1}})\}
\I\{x_1<\cdots<x_n\}\\ \notag
&\times\I\{(x_1,\ldots,x_{n-1})\in C_{n-1},\psi(B_{n}(x_1,\ldots,x_n))=0\}
\I\{x_i\in B\}\,\psi^{(n)}(\mathrm{d}(x_1,t_1,\ldots,x_n,t_n)).
\end{align}
This shows that $T$ is measurable.

We next prove that
\begin{align}\label{e4.29}
\BP(x_n(\Phi)=-\infty)=0,\quad n\in\N.
\end{align}
First we note that $x_1(\Phi)=-\infty$ iff
$\int p(x,0)\,\lambda(\mathrm{d}x)=\infty$. Since
$\Pi_\lambda(\{0\})>0$, the empty measure $0$ is stable,
so that Corollary \ref{c2.4} shows that this case cannot occur.
For $n \in \N$ we have that
\begin{align*}
\BP(&x_{n+1}(\Phi)=-\infty) \le \sum\limits_{k=1}^{n}\BP(T(\Phi)(\BX)=k,x_{k+1}(\Phi)=-\infty).
\end{align*}
Hence, it suffices to show that $\BP(T(\Phi)(\BX)=n,x_{n+1}(\Phi)=-\infty)=0$ for all $n \in \N$. We have
\begin{align*}
\BP(&x_{n+1}(\Phi)=-\infty)\\
&=\BE\bigg[\int
\I\{t_1\le p(x_1,0),\ldots,t_n\le p(x_n,\delta_{x_1}+\cdots+\delta_{x_{n-1}})\}
\I\{x_1<\cdots<x_n\} \\
&\qquad\times\I\{\Phi(B_{n-1}(x_1,\ldots,x_n))=0,(x_1,\ldots,x_n)\notin C_{n}\}
\,\Phi^{(n)}(\mathrm{d}(x_1,t_1,\ldots,x_n,t_n))\bigg].
\end{align*}
By the multivariate Mecke equation this equals
\begin{align*}
\BE\bigg[&\int
p(x_1,0)\cdots p(x_n,\delta_{x_1}+\cdots+\delta_{x_{n-1}})
\I\{x_1<\cdots<x_n\} \\
&\qquad\times\I\{\Phi(B_{n-1}(x_1,\ldots,x_{n-1}))=0,(x_1,\ldots,x_{n})\notin C_{n}\}
\,\lambda^n(\mathrm{d}(x_1,\ldots,x_n))\bigg]\\
=&\int
p(x_1,0)\cdots p(x_n,\delta_{x_1}+\cdots+\delta_{x_{n-1}})
\I\{x_1<\cdots<x_n\} \\
&\quad\times \BP(\Phi(B_{n-1}(x_1,\ldots,x_{n-1}))=0)\I\{(x_1,\ldots,x_n)\notin C_{n}\}
\,\lambda^n(\mathrm{d}(x_1,\ldots,x_n)).
\end{align*}
Our assumptions allow to apply Corollary \ref{c2.4}, so that
\begin{align}\label{e4.44}
\int \I\{x_n<x\}p(x,\delta_{x_1}+\cdots+\delta_{x_{n-1}})\,\lambda(\mathrm{d}x)<\infty
\end{align}
for $\lambda^n$-a.e.\ $(x_1,\ldots,x_n)$.
Hence \eqref{e4.29} follows.

Now we take a measurable $f\colon \bN^*(\BX)\to\R_+$.
Taking into account \eqref{e4.29}, we obtain
for each $n\in\N_0$ similarly as above that
\begin{align*}
\BE [f(T(\Phi))]=\sum^\infty_{n=0}
\int &f(\delta_{x_1}+\cdots+\delta_{x_n})
p(x_1,0)\cdots  p(x_n,\delta_{x_1}+\cdots+\delta_{x_{n-1}})\I\{x_1<\cdots<x_n\}\\
&\times \BP(\Phi(B_n(x_1,\ldots,x_n))=0)\,\lambda^n(\mathrm{d}(x_1,\ldots,x_n)),
\end{align*}
with an obvious interpretation of the summand for $n=0$.
Whenever $x_1<\cdots <x_n$ we have that
\begin{align*}
\BP(\Phi(B_n(x_1,\ldots,x_n))=0)&=\prod^n_{i=0} \BP(\Phi(A_i(x_1,\ldots,x_n))=0)\\
&=\prod^n_{i=0}\exp\bigg[-\int\I\{x_i<x<x_{i+1}\}p(x,\delta_{x_1}+\cdots+\delta_{x_i})\,\lambda(\mathrm{d}x)\bigg].
\end{align*}
Using here Lemma \ref{l2.2}, we can conclude the assertion as in the
proof of Theorem \ref{t2.1}.\qed

\bigskip

\begin{remark}\label{r22}\rm Suppose that $\kappa$ satisfies
assumption (Dom2). Then we can replace $\Phi$ by its restriction
to $\{(x,t)\in\BX\times \R_+:t\le \alpha(x)\}$.
\end{remark}

\bigskip
 Later we need the following useful consequence of Theorem \ref{tembedding}.
Here we do not assume that the measure $\lambda$ is finite.
For technical reasons we assume that $\BX$ is a complete separable metric
space and that each set in $\cX_0$ is bounded.

\begin{lemma}\label{lthreecopling} Let $\xi,\xi'$ be two Gibbs processes
on $\BX$ with Papangelou intensities $\kappa,\kappa'$ which both satisfy
(Dom2) with the same function $\alpha$. 
Then there exists a Poisson process $\eta$ with intensity measure 
$\alpha\lambda$ and point processes $\tilde\xi,\tilde{\xi}'$, 
such that $\xi\overset{d}{=}\tilde\xi$, $\xi'\overset{d}{=}\tilde\xi'$
and $\tilde{\xi}\le \eta$ and $\tilde{\xi}'\le \eta$ almost surely.
\end{lemma}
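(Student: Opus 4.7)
The plan is to realize both couplings as (measurable) functions of a single Poisson process on the product space $\BX\times\R_+$. Let $\Phi$ be a Poisson process on $\BX\times\R_+$ with intensity $\lambda\otimes\lambda_1$, restricted to the region $R:=\{(x,t):t\le\alpha(x)\}$, and let $\eta$ denote its projection to $\BX$. By the mapping theorem $\eta$ is Poisson with intensity $\alpha\lambda$. Since the thinning probability $p$ defined by \eqref{ethprob} satisfies $p\le\alpha$ by Lemma \ref{l4.1} (applied once to $\kappa$ and once to $\kappa'$), any point selected by the embedding operator $T$ from Theorem \ref{tembedding} — for either PI — must automatically lie in $R$ and hence project into $\eta$. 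Consequently, if $\tilde\xi$ and $\tilde\xi'$ can be produced as outputs of the embedding applied to the same $\Phi$, the domination $\tilde\xi\le\eta$, $\tilde\xi'\le\eta$ is built into the construction.

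The technical work is to produce $\tilde\xi$ and $\tilde\xi'$ with the prescribed marginal laws in the present $\sigma$-finite setting, whereas Theorem \ref{tembedding} is stated only for finite $\lambda$. I would exhaust $\BX$ by an increasing sequence $B_n\uparrow\BX$ in $\cX_0$, available because the $\cX_0$-sets are bounded in the Polish space $\BX$. On each $B_n$, the restriction $\lambda_{B_n}$ is finite, and by the DLR equation \eqref{edlr} together with the observation after Corollary \ref{c2.3}, the conditional law of $\xi|_{B_n}$ given $\xi|_{B_n^c}$ is Gibbs on $B_n$ with PI $\kappa_{B_n,\xi|_{B_n^c}}$, still pointwise dominated by $\alpha$. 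Theorem \ref{tembedding} then yields, on each window, a coupling of $\xi|_{B_n}$ with a Poisson process of intensity $\alpha\lambda_{B_n}$ that dominates it. Using this on every $B\in\cX_0$ to control the distributions $\BP(\xi(A)\le k)$ for $A\in\cX_0$ gives the global stochastic domination $\xi\le_{\mathrm{st}}\Pi_{\alpha\lambda}$, and similarly $\xi'\le_{\mathrm{st}}\Pi_{\alpha\lambda}$.

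With stochastic domination in hand, Strassen's theorem — applicable because $(\bN,\cN)$ is standard Borel and $\{(\mu,\nu)\in\bN^2:\mu\le\nu\}$ is measurable — gives, on separate probability spaces, couplings $(\tilde\xi,\eta^{(1)})$ and $(\tilde\xi',\eta^{(2)})$ with the correct marginals and with $\tilde\xi\le\eta^{(1)}$, $\tilde\xi'\le\eta^{(2)}$. To move onto a single probability space with a common $\eta$, I disintegrate each coupling against its Poisson marginal, obtaining regular conditional probabilities $K_1(\nu,\cdot)$ and $K_2(\nu,\cdot)$ (again available by the Polish structure of $\bN$); then I sample $\eta\sim\Pi_{\alpha\lambda}$ and draw $\tilde\xi\sim K_1(\eta,\cdot)$, $\tilde\xi'\sim K_2(\eta,\cdot)$ conditionally independently. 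The resulting triple $(\tilde\xi,\tilde\xi',\eta)$ has all the required properties.

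The main obstacle I foresee is the passage from finite to $\sigma$-finite $\lambda$: one must verify that the windowed embeddings on the $B_n$ truly produce (or at least certify domination of) the correct infinite-volume Gibbs law, and not a different Gibbs measure with the same PI. This is precisely where the DLR structure and the pointwise bound $\kappa_{B_n,\cdot}\le\alpha$ are used to reduce each window to a hypothesis of Theorem \ref{tembedding}; the remaining glueing/consistency is then a routine monotone-class argument on events of the form $\{\mu(A)\le k\}$ for $A\in\cX_0$.
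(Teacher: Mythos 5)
Your proposal is correct, but the mechanism you use to produce the \emph{joint} coupling is genuinely different from the paper's. You establish each of the pairwise stochastic dominations $\xi\le_{\mathrm{st}}\Pi_{\alpha\lambda}$ and $\xi'\le_{\mathrm{st}}\Pi_{\alpha\lambda}$, invoke Strassen's theorem twice on the Polish space $\bN$ with the closed partial order $\le$ to obtain two separate monotone couplings, disintegrate each against the common Poisson marginal $\Pi_{\alpha\lambda}$, and then glue the two kernels by sampling $\tilde\xi$ and $\tilde\xi'$ conditionally independently given $\eta$. This is sound: disintegration is available by the Polish structure, the conditional-independence gluing preserves the two marginal laws, and the domination events $\{\mu\le\nu\}$ carry over since each kernel $K_i(\nu,\cdot)$ is supported on $\{\mu\le\nu\}$ for $\Pi_{\alpha\lambda}$-a.e.\ $\nu$. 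The paper, in contrast, never detours through Strassen: it builds both local Gibbs processes by applying the embedding operator of Theorem~\ref{tembedding} to the \emph{same} Poisson process $\Phi$ (using the unconditioned window PI $\kappa^B(x,\mu)=\int\kappa(x,\mu+\psi)\,\BP(\xi_{B^c}\in\mathrm{d}\psi\mid\xi_B=\mu)$ rather than the conditional one), records that the resulting triple $(\chi^{B_n},\chi'^{B_n},\Phi^{B_n})$ is tight, extracts a weak subsequential limit $(\tilde\xi,\tilde\xi',\eta)$, and deduces $\tilde\xi\le\eta$, $\tilde\xi'\le\eta$ by the Portmanteau theorem because $\{(\mu,\mu',\psi):\mu\le\psi,\,\mu'\le\psi\}$ is closed. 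The paper's route is more constructive and yields the triple directly; your route replaces the joint weak limit with a Strassen-plus-disintegration argument, which is perfectly legitimate and arguably more modular, at the cost of being less explicit about the coupling.

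One small imprecision: you say that controlling the one-dimensional marginals $\BP(\xi(A)\le k)$ over $A\in\cX_0$ (or a monotone-class argument on those events) gives the global stochastic domination. It does not — window-wise control of the counting-variable distribution functions is much weaker than stochastic domination. What actually closes the gap is the stronger fact your window construction already provides: on each $B_n$ you have an explicit almost-sure coupling $\chi^{B_n}\le\Phi^{B_n}$, and passing this through bounded increasing continuous functionals and a weak limit (exactly as the paper does, or via the cited Georgii--K\"uneth result) yields $\BE[f(\xi)]\le\BE[f(\Pi_{\alpha\lambda})]$ for all bounded increasing measurable $f$, which is the hypothesis Strassen requires. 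So the conclusion you want is available, but the justification should run through the coupling and weak convergence rather than through the distribution functions $\BP(\xi(A)\le k)$.
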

{\em Proof:} Let $B\in\cX_0$. The restriction $\xi_B$ of $\xi$
to $B$ is a Gibbs process with PI
\begin{align*}
\kappa^B(x,\mu):=\int \kappa(x,\mu+\psi)\,\BP(\xi_{\BX\setminus B}\in\mathrm{d}\psi\mid \xi_B=\mu),
\quad (x,\mu)\in B\times\bN_B.
\end{align*}

Note that $\kappa^B\le \alpha$. A similar assertion applies to
$\xi'$ and its PI $\kappa'^B$.
Let $\Phi$ be a Poisson process on $\BX\times\R_+$ with
intensity measure $\lambda\otimes\lambda_+$. Since $\lambda$ is diffuse
we can assume that $\Phi$ is simple, that is a random element
of $\bN^*_{\ell}(\BX\times\R_+)$. Let $T^B$ and $T'^B$ be the
embedding operators associated with $\kappa^B$ resp.\
$\kappa'^B$. Define 
$(\chi^B,\chi'^B):=(T^B(\Phi_{B\times\R_+}),T'^B(\Phi_{B\times\R_+}))$.
By Theorem \ref{tembedding} we have that
$\chi^B\overset{d}{=}\xi_B$ and $\chi'^B\overset{d}{=}\xi'_B$.
By definition of the embedding and Lemma \ref{l4.1} we have
that $\chi^B$ and $\chi'^B$ are both smaller than 
\begin{align*}
\Phi^B:=\int \I\{x\in\cdot,x\in B, t\le \alpha(x)\}\,\Phi(\mathrm{d}(x,t)),
\end{align*} 
which is a Poisson process with intensity measure $\alpha\lambda_B$.
Now we argue exactly as in the proof of \cite[Corollary 3.4]{GeorYoo05}.
We take a sequence $(B_n)_n$ of bounded and closed sets
such that $B_n\uparrow\BX$. Then 
$\chi^{B_n}\overset{d}{\rightarrow}\xi$,
$\chi'^{B_n}\overset{d}{\rightarrow}\xi'$
and $\Phi^{B_n}\overset{d}{\rightarrow}\eta$,
where $\eta$ is a Poisson process with intensity measure
$\alpha\lambda$ and where we refer to \cite[Chapter 4]{Kallenberg17}
for the theory of weak convergence of point process distributions.
It follows from \cite[Theorem 16.3]{Kallenberg}
that the above sequences are all tight. A standard argument
shows that $(\chi^{B_n},\chi'^{B_n},\Phi^{B_n})$, $n\in\N$, 
is tight and hence converges in distribution to 
$(\tilde\xi,\tilde\xi', \eta)$ along some subsequence.
Then $\xi\overset{d}{=}\tilde\xi$ and $\xi'\overset{d}{=}\tilde\xi'$.
Moreover, $\eta$ is a Poisson process with
intensity measure $\alpha\lambda$. We can assume that
$(\tilde\xi,\tilde\xi', \eta)$ is defined on the original
probability space $(\Omega,\cF,\BP)$.
Since the set $\{(\mu,\mu',\psi)\in\bN^3:\mu\le \psi,\mu'\le \psi\}$
is closed, it follows from the Portmanteau theorem that 
$\BP(\tilde\xi\le \eta,\tilde\xi'\le \eta)=1$.
\qed

\section{Disagreement coupling}\label{secdisagreement}

In this section we return to the general setting of Section \ref{secGibbs},
that is we consider a  $\sigma$-finite measure $\lambda$
along with a measurable 
$\kappa\colon\BX\times\bN\to\R_+$ satisfying
\eqref{ecocycle} for all $(x,y,\mu)\in\BX^2\times\bN$.
We assume that $\lambda$ is diffuse.
For $\psi\in\bN$ and $B\in\cX_0$ we recall the definition \eqref{ekappapsi}
of $\kappa_\psi$ and that  $\kappa_{B,\psi}$ is 
the restriction of $\kappa_\psi$ to $B\times \bN_B$.
Using $\kappa_{B,\psi}$ instead of $\kappa$ we can define
the function $p_{B,\psi}\colon W\times\bN^*(B)\to[0,1]$ by
\eqref{ethprob} and the function 
$T_{B,\psi}\colon \bN^*_{\ell}(B\times\R_+)\to \bN^*(B)$
by \eqref{eTpsi}. Note that this mapping depends on the
chosen ordering on $B$.
Let  $\Phi$ be a Poisson process
on $\BX\times\R_+$  with intensity measure $\lambda\otimes\lambda_1$.
We will use Theorem \ref{tembedding} by applying the mapping
$T_{Z,\Psi}$ to the restriction $\Phi_{Z\times \R_+}$
for suitable random sets $Z$ and (recursively defined) point processes
$\Psi$. 

In the following we fix $W\in\cX_0$.
We say that $\psi\in \mathbf{N}_{W^c}$ is a {\em regular boundary condition} ({\em for $W$})
if 
\begin{align}\label{e62.3}
Z_B(\mu_{W\setminus B}+\psi)<\infty,\quad B\subset W,\,B\in\cX,\, \Pi_{\lambda}\text{-a.e.\ $\mu$}.
\end{align}
This means that there exists a measurable $H\subset\bN$ such that
$\Pi_\lambda(H)=1$ and such that $\mu_{W\setminus B}+\psi$ is for
all $\mu\in H$ stable on $W$; cf.\ (Dom1).

\begin{remark}\label{rbounded0}\rm If the PI $\kappa$ 
satisfies (Dom2), then each $\psi\in\bN_{W^c}$ is a regular
boundary condition for $W\in\cX_0$; see Remark \ref{rbounded}.
\end{remark}

\begin{remark}\label{rbounded2}\rm Let $\xi$ be a Gibbs process with PI $\kappa$ and let $W,B\in\cX_0$
with $B\subset W$. From \eqref{e2.3} and the DLR-equations
we obtain that
\begin{align*}
\int \I\{Z_B(\mu_{W\setminus B}+\psi)<\infty\}\,\Pi_{W,\psi}(\mathrm{d}\mu)=1,
\quad \BP(\xi_{B^c}\in\cdot)\text{-a.e.\ $\psi$},
\end{align*}
where $\Pi_{W,\psi}$ is the conditional distribution of $\xi_W$ given 
$\xi_{W^c}=\psi$. Therefore \eqref{e62.3} seems to be a reasonable assumption,
at least if the exceptional set is allowed to depend
on $B\subset W$. If (Dom1) holds for all $\psi\in\bN$, then each boundary condition
is regular for $W$.
\end{remark}

We construct a special coupling of
two Gibbs processes on $W\in\cX_0$ with Papangelou intensities 
$\kappa_{W,\psi}$ and $\kappa_{W,\psi'}$ for regular boundary conditions
$\psi,\psi'\in\bN_{W^c}$. 
This extends the coupling construction in \cite{HTHou17} (see also
\cite[Theorem 1]{BHLV20}) to general Borel spaces.
While the coupling in \cite{HTHou17} is
based on the thinning construction from Theorem \ref{t2.1},
we find it more convenient  to work with the
more explicit Poisson embedding from Theorem \ref{tembedding}.
In particular we can then apply the spatial Markov property
of the underlying Poisson process in a smooth and rigorous way, to establish
the Gibbs property of the marginals and hence to add the
arguments missing in \cite{HTHou17}. In fact this is the main
part of the proof.
Even though our coupling does not require the local stability assumption (Dom2),
the latter is crucial for our Theorem \ref{tpoapprox}.


Let $W\in\cX_0$ and $\psi,\psi'\in\bN_{W^c}$.
Recursively we define a sequence of 
mappings $Y_1,Y_2,\ldots$
from $\Omega$ to the Borel subsets of $W$,
along with point processes $\chi_1,\chi_2,\ldots$ and $\chi'_1,\chi'_2,\ldots$.
Given such sequences we define, for $n\in\N$,
$W_n:=Y_1\cup\cdots\cup Y_n$, $\xi_n:=(\chi_n)_{Y_n}$
and $\xi'_n:=(\chi'_n)_{Y_n}$.
The recursion starts with $Y_1:=\{x\in W: x\sim (\psi+\psi')\}$
and
\begin{align*}
(\chi_1,\chi'_1):=(T_{W,\psi}(\Phi_{W\times\R_+}),T_{W,\psi'}(\Phi_{W\times\R_+})).
\end{align*}
The latter definition uses the ordering induced by a Borel isomorphism.
We will modify the original isomorphism by first assuming that
$\varphi(W)\subset (0,1]$ and then setting
\begin{align}\label{varphi1}
\varphi_1:=\frac{\varphi_{Y_1}}{2}+\Big(\frac{1+\varphi}{2}\Big)_{W\setminus Y_1},
\end{align}
where the lower set index denotes restriction.
Then $\varphi_1$ is a Borel isomorphism and each point in $Y_1$ 
is smaller than every point in $W\setminus Y_1$.
This fact will become relevant later on.
For $n\in\N$ we define
\begin{align}\label{rec1}
Y_{n+1}:=\begin{cases}
W\setminus W_n,&\text{if $\chi_n(Y_n)+\chi'_n(Y_n)=0$,}\\
\{x\in W\setminus W_n: x\sim (\chi_n+\chi'_n)_{Y_n}\},
&\text{if $\chi_n(Y_n)+\chi'_n(Y_n)>0$,}
\end{cases}
\end{align}
and
\begin{align}\label{rec2}
(\chi_{n+1},\chi'_{n+1}):=(T_{W\setminus W_n,\psi+\xi_1+\cdots+\xi_n}(\Phi_{(W\setminus W_n)\times\R_+}),
T_{W\setminus W_n,\psi'+\xi'_1+\cdots+\xi'_n}(\Phi_{(W\setminus W_n)\times\R_+})).
\end{align}
Here we use the Borel isomorphism
\begin{align}\label{varphin}
\varphi_{n+1}:=\frac{\varphi_{Y_{n+1}}}{2}+\Big(\frac{1+\varphi}{2}\Big)_{W\setminus W_{n+1}}.
\end{align}
Since $\varphi_{n+1}$ depends on $Y_{n+1}$ it is a random mapping
and in fact a mapping from $\Omega\times (W\setminus W_n)$ to $(0,1]$.
Note that if $\chi_n(Y_n)+\chi'_n(Y_n)=0$ then
$Y_1\cup\cdots\cup Y_{n+1}=W$. In that
case we have $\xi_{n+1}=\xi'_{n+1}$ (see the final part
of the proof of Theorem \ref{tdis}) and $Y_i=\emptyset$ for $i\ge n+2$.

Let us briefly record some measurability properties. 
By the measurability property of $\sim$ it follows that
$Y_1$ is measurable, while the measurability of $\chi_1,\chi'_1$
(as functions on $\Omega$) follows from
the measurability of $T_{W,\psi}$ and $T_{W,\psi'}$.
Therefore $\xi_1,\xi_2$ are measurable as well.
It follows that $Y_2$ is {\em graph-measurable}, that is
$(x,\omega)\mapsto \I\{x\in Y_2(\omega)\}$ is measurable on
$W\times \Omega$. Using the explicit representation
\eqref{eTmeas} for $T_{W\setminus W_1,\kappa_{\psi+\xi_1+\cdots+\xi_n}}$
we see that $\chi_2$ is measurable. Proceeding this way,
it follows inductively that each $Y_n$ (and hence each $W_n$)
is graph-measurable and that $\chi_1,\chi_2,\ldots$, $\chi'_1,\chi'_2,\ldots$,
$\xi_1,\xi_2,\ldots$ and $\xi'_1,\xi'_2,\ldots$ are measurable
and hence point processes.

Define
\begin{align*}
\xi:=\sum^\infty_{n=1}\xi_n,\quad \xi':=\sum^\infty_{n=1}\xi'_n.
\end{align*}
and note that
\begin{align*}
|\xi-\xi'|=\sum^\infty_{n=1}|\xi_n-\xi'_n|,
\end{align*}
where the definition
of the total variation measure $|\mu-\mu'|$ for $\mu,\mu'\in\bN$
has been given at the end of Section \ref{secGibbs}.

\begin{theorem}\label{tdis} Assume that $\kappa$ satisfies (Loc1).
Suppose that $W\in\cX_0$ and 
let $\psi,\psi'\in \bN_{W^c}$ be two regular boundary conditions for $W$.
Construct $\xi$ and $\xi'$ as above.  Then $\xi$ is a Gibbs process on
$W$ with PI $\kappa_{W,\psi}$ while $\xi'$ is a Gibbs process on $W$
with PI $\kappa_{W,\psi'}$.  Every point in $|\xi-\xi'|$ is connected
via $\xi+\xi'$ to $\psi+\psi'$. 
Moreover, if $\kappa$ satisfies (Dom2), then the support of
$\xi+\xi'$ is contained in the support of 
$\int\I\{x\in\cdot\cap W,t\le \alpha(x)\}\,\Phi(\mathrm{d}(x,t))$
which is a Poisson process with intensity measure $\alpha\lambda_W$.
\end{theorem}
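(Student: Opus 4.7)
I would prove the three claims in sequence.

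\emph{Gibbs distribution of $\xi$ (and analogously $\xi'$).} The strategy is induction on $n$: I would show $\xi_1+\cdots+\xi_n$ has the same joint law as $(\tilde\xi|_{W_n}, W_n)$, where $\tilde\xi\sim\Pi_{W,\psi}$. The base case is direct from Theorem~\ref{tembedding} since $Y_1$ is deterministic and $\chi_1=T_{W,\psi}(\Phi_{W\times\R_+})$ is a Gibbs process on $W$ with PI $\kappa_{W,\psi}$. The crucial point for the inductive step is that the ordering $\varphi_{n+1}$ from \eqref{varphin}, by construction, places all points of $Y_{n+1}$ before points of $W\setminus W_{n+1}$, so that the embedding $\chi_{n+1}$ restricted to $Y_{n+1}$ (hence $\xi_{n+1}$) is measurable with respect to $\Phi|_{W_{n+1}\times\R_+}$. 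Consequently $W_n$ is a stopping set, and the spatial Markov property of the Poisson process (using the general framework from the appendix of \cite{LPY2021}) delivers that $\Phi|_{(W\setminus W_n)\times\R_+}$ is, conditionally on the data generated so far, again a Poisson process with intensity $\lambda\otimes\lambda_1$. Applying Theorem~\ref{tembedding} conditionally, $\chi_{n+1}$ is a Gibbs process on $W\setminus W_n$ with PI $\kappa_{W\setminus W_n,\psi+\xi_1+\cdots+\xi_n}$, and the conditional DLR identity \eqref{condDLR} characterises $\tilde\xi|_{W\setminus W_n}$ identically, closing the induction. Regularity of $\psi$ together with \eqref{e2.3} ensures the partition functions remain finite almost surely at every stage; the almost sure finiteness of $\tilde\xi$ then forces the cascade to terminate a.s., so that $\bigcup_n W_n = W$ and $\xi\overset{d}{=}\tilde\xi$.

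\emph{Disagreement propagation.} For $x\in|\xi-\xi'|$, I would let $k$ denote the unique index with $x\in Y_k$ and argue by induction on $k$ that every point of $\xi_k+\xi'_k$ is connected via $\xi+\xi'$ to $\psi+\psi'$. The base $k=1$ is immediate from $Y_1=\{x\in W:x\sim(\psi+\psi')\}$; the step uses $Y_{k+1}=\{x\in W\setminus W_k:x\sim(\xi_k+\xi'_k)\}$ together with the inductive hypothesis applied to a $\sim$-neighbour $y\in\xi_k+\xi'_k\subset\xi+\xi'$. The remaining case is when the cascade terminates at step $n$, i.e.\ $\xi_n=\xi'_n=0$ and $Y_{n+1}=W\setminus W_n$; here I need $\xi_{n+1}=\xi'_{n+1}$ so that no disagreement can occur in $Y_{n+1}$. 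Assumption (Loc1) is the key: any $x\in W\setminus W_n$ fails to be $\sim$-adjacent to $\psi$ or to any $\xi_j,\xi'_j$ (else $x\in Y_j\subset W_n$), and a short analysis of the cluster structure shows that no $\sim$-path from such $x$ through a configuration supported on $W\setminus W_n$ can reach the boundary. Hence (Loc1) renders the two PIs $\kappa_{W\setminus W_n,\psi+\xi_1+\cdots+\xi_{n-1}}$ and $\kappa_{W\setminus W_n,\psi'+\xi'_1+\cdots+\xi'_{n-1}}$ equal on every relevant configuration, and the embeddings acting on the common Poisson input produce identical outputs.

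\emph{Poisson domination and main obstacle.} For the final claim, Lemma~\ref{l4.1} implies that every thinning probability $p$ used in the embeddings is bounded above by $\alpha$, so each point of $\xi$ and $\xi'$ necessarily corresponds to a Poisson atom $(x,t)\in\Phi$ with $t\le\alpha(x)$, giving the domination assertion at once. The principal technical obstacle is the inductive step for the Gibbs property: one must formalise the stopping-set property of $W_n$ (exploiting the bespoke ordering $\varphi_{n+1}$), justify the Poisson Markov property for $\Phi$ restricted to the random complement $(W\setminus W_n)\times\R_+$, and then thread Theorem~\ref{tembedding} together with the conditional DLR identity \eqref{condDLR} so that the laws match stepwise. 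Handling $\chi_n$ and $\chi'_n$ jointly from the same Poisson driver, with each process carrying its own boundary condition and ordering, is what makes the bookkeeping delicate.
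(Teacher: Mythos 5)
Your high-level strategy matches the paper closely: a recursive reduction via a stopping set and the spatial Markov property of $\Phi$, a conditional application of Theorem~\ref{tembedding}, and the conditional DLR identity \eqref{condDLR} to close the induction; the disagreement-propagation argument and the domination claim are also essentially right. But there is a genuine gap in the inductive step: you take $W_n$ (equivalently $W_n\times\R_+$) as the stopping set. This set has $(\lambda\otimes\lambda_1)$-measure $+\infty$ whenever $\lambda(W_n)>0$, so $\Phi(W_n\times\R_+)=\infty$ a.s., and the spatial Markov property you are invoking — see \eqref{estoppingset}, which is derived from \cite[Theorem A.3]{LPY2021} — holds only almost surely on $\{\Phi(S_n)<\infty\}$. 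With your choice of stopping set this conditioning event is null, so the conclusion becomes vacuous. The paper instead works with the much smaller stopping set $S_n=Y^*_1\cup\cdots\cup Y^*_n$, where $Y^*_k=\{(x,t):x\in Y_k,\,t\le p_k(x)\}$ is precisely the region of $\BX\times\R_+$ actually inspected by the embeddings. Proving $\Phi(S_n)<\infty$ a.s.\ (equation \eqref{e5.18}) is then a substantive step, carried out by showing that every $\Phi$-atom in $Y^*_k$ must be a point of $\chi_k$ or $\chi'_k$, so $\Phi(S_1)\le\chi_1(W_1)+\chi'_1(W_1)$ and analogously for larger $n$. This refinement is not optional.

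There is also a smaller but nontrivial gap: you assert that regularity of $\psi$ plus \eqref{e2.3} keeps the partition functions finite through the recursion, but this needs an absolute-continuity argument. The paper needs Lemma~\ref{l5.3} to transfer the a.e.\ statement hidden in the definition \eqref{e62.3} of a regular boundary condition to the law of $\xi_1+\cdots+\xi_n+\mu_{W\setminus W_n}$, so that $\psi+\xi_1+\cdots+\xi_n$ is a.s.\ a regular boundary condition for $W\setminus W_n$ before Theorem~\ref{tembedding} is applied conditionally. Your sketch elides this. Finally, in your (Loc1) argument you write that $x\sim\xi_j$ would force $x\in Y_j$; it actually forces $x\in Y_{j+1}$, but that does not affect the conclusion since $Y_{j+1}\subset W_n$ for $j\le n-1$.
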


\begin{remark} \rm Theorem \ref{tdis} applies for all
boundary conditions to the Strauss process, the continuum random
cluster model and the Widom-Rowlinson model; see Section
\ref{secass}. Moreover, it applies to all boundary conditions
satisfying \eqref{eZS} in a Gibbs process with a pair
potential. For the area interaction process, Theorem \ref{tdis}
applies if $\mathbb{Q}([r_1,\infty))=1$ for some $r_1>0$.
\end{remark} 

\begin{proof}[Proof of Theorem \ref{tdis}] We need some more notation. For each measurable set $B\subset W$
and each $\mu \in\bN_{B^c}$
define $p_{B,\mu}$ by \eqref{ethprob} with $(\BX,\kappa)$
replaced by $(B,\kappa_{B,\mu})$. Define
\begin{align}\label{eyn*}
Y^*_n:=\{(x,t)\in\BX\times\R_+:x\in Y_n,t\le p_n(x)\},
\end{align}
where 
\begin{align*}
p_n(x):=\max\{p_{W\setminus W_{n-1},\psi+\xi_1+\cdots+\xi_{n-1}}(x,\chi_n),
p_{W\setminus W_{n-1},\psi'+\xi'_1+\cdots+\xi'_{n-1}}(x,\chi'_n)\}
\end{align*}
and $W_0:=\emptyset$. Since $Y_n$ is graph-measurable, so is $Y_n^*$.
Set $S_n:=Y^*_1\cup\cdots \cup Y^*_n$.
A crucial tool for our proof is the spatial Markov property
\begin{align}\label{estoppingset}
\BP(\Phi_{(W\times\R_+)\setminus S_n}\in\cdot\mid \Phi_{S_n})
=\Pi_{(\lambda\otimes\lambda_1)_{(W\times\R_+)\setminus S_n}},\quad \BP\text{-a.s.\ on $\{\eta(S_n)<\infty\}$.}
\end{align}
This follows from \cite[Theorem A.3]{LPY2021}, once we will have proved 
that $S_n$ is a {\em stopping set}. The latter means that
$S_n$ is graph-measurable and
\begin{align}\label{estopset}
S_n(\Phi_{S_n}+\mu_{(W\times\R_+)\setminus S_n})=S_n(\Phi),\quad \mu\in \bN^*_{\ell}(W\times\R_+).
\end{align}
Here we (slightly) abuse our notation by interpreting 
$S_n$ as a mapping on $\bN^*_\ell(W\times\R_+)$.

To check \eqref{estopset}, we prove inductively that
$\xi^*_n:=(\xi_1,\xi_1',\ldots,\xi_n,\xi_n')$ and
$Y^*_1,\ldots,Y^*_n$ do not change if the points
in $\Phi_{(W\times\R_+)\setminus S_n}$ are replaced by an arbitrary configuration.
For $n=1$ this follows from \eqref{varphi1} and the definitions of
$T_{W,\psi}(\Phi_{W\times\R_+})$ and $T_{W,\psi'}(\Phi_{W\times\R_+})$. 
Suppose that it is true for some $n\in\N$. In particular,
changing the points of $\Phi$ in $(W\times\R_+)\setminus S_{n+1}$
does not change $\xi^*_n$ and hence also not $Y_{n+1}$.
Hence it follows from \eqref{varphin} that $(\xi_{n+1},\xi_{n+1}')$
does not change either. By definition \eqref{eyn*}
$Y^*_{n+1}$ does not change.

Since $\psi$ and $\psi'$ are regular boundary conditions for $W$
it follows from Theorem \ref{tembedding} that 
$\chi$ (resp.\ $\chi'$) is a Gibbs process with PI $\kappa_{W,\psi}$
(resp.\ $\kappa_{W,\psi'}$). We shall prove by induction that
\begin{align}\label{e5.08}
\chi_{n+1}+\sum^n_{m=1}\xi_m\overset{d}{=}\chi_1,\,
\chi'_{n+1}+\sum^n_{m=1}\xi'_m\overset{d}{=}\chi'_1,
\quad n\in\N_0,
\end{align}
and
\begin{align}\label{e5.18}
\BP(\Phi(S_{n+1})<\infty)=1.
\end{align}

The case $n=0$ of \eqref{e5.08} is trivial.
Since $\psi$ and $\psi'$ are regular boundary conditions
we can recall from the proof of Theorem \ref{tembedding}
that $\int p_W(x,\psi)\,\lambda(\mathrm{d}x)+\int p_W(x,\psi')\,\lambda(\mathrm{d}x)<\infty$.
We assert that the first coordinate of each point of $\Phi_{W\times\R_+}$ in $Y^*_1$ 
is either a point of $\chi_1$ or $\chi_1'$ (or both).
To see this let $(x,t)\in \Phi_{Y_1\times\R_+}$ such that
$p_{W,\psi}(x,\chi_1)\le t$. Suppose that
$\xi_1=\delta_{x_1}+\cdots+\delta_{x_n}$, where $n\in\N_0$
and $x_1<\cdots<x_n$.
By definition \eqref{ex1psi} of the smallest point of $T_{\kappa_W,\psi}$ we must then have that
$\xi_1=(\chi_1)_{Y_1}\ne 0$  and $x\ge x_1$.
From the recursion \eqref{exnpsi} we can also exclude the case $x>x_n$.
So either $n=1$ and $x=x_1$ or 
$n\ge 2$ and there exists $m\in\{1,\ldots,n-1\}$ such that
$x_m<x\le x_{m+1}$. In the latter case we have by definition
of $p_{W,\psi}$ (see \eqref{ethprob}) that
$p_{W,\psi}(x,\chi_1)=p_{W,\psi}(x,\delta_{x_1}+\cdots+\delta_{x_m})$.
Therefore we obtain again from the recursion \eqref{exnpsi}
that $x_{m+1}\le x$ 
and hence $x=x_{m+1}$. This proves our (auxiliary) assertion. Hence we can conclude that
\begin{align}\label{e5.79}
\Phi(S_1)\le \chi_1(W_1)+\chi'_1(W_1),\quad \BP\text{-a.s.} 
\end{align}
which is finite. 

Let $n\in\N$ and assume that \eqref{e5.08} and \eqref{e5.18}
hold for $n-1$. Let $f\colon \bN_W\to\R_+$ be measurable. We have that
\begin{align*}
I:=\BE[f(\xi_1+\cdots+\xi_n+\chi_{n+1})]
=\BE\big[\BE\big[f(\xi_1+\cdots+\xi_n
+T_{W\setminus W_n,\kappa_{\psi+\xi_1+\cdots+\xi_n}}(\Phi_{S^c_n}))\mid \Phi_{S_n}\big]\big],
\end{align*}
where $S_0:=\emptyset$ and $S^c_n:=(W\times\R_+)\setminus S_n$.
We have seen above that 
$$
(\omega,\mu)\mapsto (\xi_1(\omega),\ldots,\xi_n(\omega),
T_{W\setminus W_n(\omega),\kappa_{\psi+\xi_1(\omega)+\cdots+\xi_n(\omega)}}(\mu))
$$
can be written as a function of $(\Phi_{S_n(\omega)}(\omega),\mu)$. As at \eqref{eTmeas}
it can be shown that this function 
is in fact measurable. To deal with $I$ we shall use
\eqref{estoppingset}, Theorem \ref{tembedding} and a standard property of conditional
expectations; see \cite[Theorem 6.4]{Kallenberg}.

In order to apply Theorem \ref{tembedding} we need to check
that $\xi_1+\cdots+\xi_n+\psi$ is almost surely a regular
boundary condition for $W\setminus W_n$.
By assumption \eqref{e62.3} there exists a measurable $H\subset\bN_W$ such that
$\Pi_\lambda(H)=1$ and
\begin{align*}
H\subset\{\mu\in N_W:\text{$Z_B(\mu_{W\setminus B})+\psi)<\infty$ for each measurable $B\subset W$}\}.
\end{align*}
By induction hypothesis $\xi_1+\cdots+\xi_n$ is the restriction
of the Gibbs process $\xi_1+\cdots+\xi_{n-1}+\chi_n$ to $W_n$.
Hence we obtain from Lemma \ref{l5.3} below that
$$
\BP(\xi_1+\cdots+\xi_{n}+\mu_{W\setminus W_n}\in H)=1,\quad \Pi_\lambda\text{-a.e.\ $\mu$}.
$$
And if $\xi_1+\cdots+\xi_{n}\in H$ then
$\xi_1+\cdots+\xi_{n}+\psi$ 
is indeed a regular boundary condition for
$W\setminus W_n$. 

Now we are allowed to apply \eqref{estoppingset} and Theorem \ref{tembedding}
to obtain that 
\begin{align*}
I=\BE\bigg[\int f(\xi_1+\cdots+\xi_n+\mu')\,\Pi_{W\setminus W_n,\psi+\xi_1+\cdots+\xi_n}(\mathrm{d}\mu')\bigg],
\end{align*}
where we recall the definition \eqref{eGibbsmeasure} of a Gibbs measure.
Assume that $n=1$. Since $\chi_1$ is Gibbs we obtain that
\begin{align}\label{e5.44}
I=\iint f(\mu_{Y_1}+\mu')\,\Pi_{W\setminus Y_1,\psi+\mu_{Y_1}}(\mathrm{d}\mu')
\,\Pi_{W,\psi}(\mathrm{d}\mu).
\end{align}
Using \eqref{condDLR} in \eqref{e5.44} with $(B,C)=(W,Y_1)$ we obtain that
$I=\int f(\mu)\,\Pi_{W,\psi}(\mathrm{d}\mu)$, that is, the first part
of \eqref{e5.08} for $n=1$.
Assume now that $n\ge 2$. Define the event
$A_n:=\{(\chi_{n-1}+\chi'_{n-1})(Y_{n-1})=0\}$. 
Then we have that $I=I_1+I_2$, where
\begin{align*}
I_1:=\BE\bigg[\I_{A_n}
\int f(\xi_1+\cdots+\xi_n+\mu')\,\Pi_{W\setminus W_n,\psi+\xi_1+\cdots+\xi_n}(\mathrm{d}\mu')\bigg],
\end{align*}
and $I_2$ is defined in the obvious way. 
On the event $A_n$ we have that $Y_n=W\setminus W_{n-1}$,
$W_n=W$ and $\xi_n=\chi_n$. Therefore,
\begin{align*}
I_1&=\BE [\I_{A_n}f(\xi_1+\cdots+\xi_n)]\\
&=\BE\bigg[\I_{A_n} \int f(\xi_1+\cdots+\xi_{n-1}+\mu')
\,\Pi_{W\setminus W_{n-1},\psi+\xi_1+\cdots+\xi_{n-1}}(\mathrm{d}\mu)\bigg],
\end{align*}
where we have again used \eqref{estoppingset} and Theorem \ref{tembedding}.
On the event $\Omega\setminus A_n$ we have that $\xi_n=(\chi_n)_{Y_n}$.
Therefore we obtain from \eqref{estoppingset} and Theorem \ref{tembedding}
\begin{align}\label{e6.7}\notag
I_2=\BE\bigg[\I_{\Omega\setminus A_n}
\iint & f(\xi_1+\cdots+\xi_{n-1}+\mu_{Y_n}+\mu')\\
&\times \Pi_{W\setminus W_n,\psi+\xi_1+\cdots+\xi_{n-1}+\mu_{Y_n}}(\mathrm{d}\mu')
\,\Pi_{W\setminus W_{n-1},\psi+\xi_1+\cdots+\xi_{n-1}}(\mathrm{d}\mu)\bigg].
\end{align}
Applying \eqref{condDLR} to the right-hand side of \eqref{e6.7}
with $B:=W\setminus W_{n-1}$ and $C:=W\setminus Y_{n}$ yields
\begin{align*}
I_2=\BE\bigg[\I_{\Omega\setminus A_n}\int f(\xi_1+\cdots+\xi_{n-1}+\mu)
\,\Pi_{W\setminus W_{n-1},\psi+\xi_1+\cdots+\xi_{n-1}}(\mathrm{d}\mu)\bigg]
\end{align*}
and hence
\begin{align*}
I&=\BE\bigg[ \int f(\xi_1+\cdots+\xi_{n-1}+\mu)
\,\Pi_{W\setminus W_{n-1},\psi+\xi_1+\cdots+\xi_{n-1}}(\mathrm{d}\mu)\bigg]\\
&=\BE[f(\xi_1+\cdots+\xi_{n-1}+\chi_n],
\end{align*}
where the second equality comes again from Theorem \ref{tembedding}.
This shows the first part of \eqref{e5.08} for $n\ge 2$. Of course
the second part follows in the same way. 

Since $\xi_1+\cdots+\xi_n+\psi$ is almost surely a regular
boundary condition for $W\setminus W_n$ we obtain
exactly as at 
\eqref{e5.79} that
\begin{align}
  \Phi(S_{n+1})\le \chi_{n+1}(W_{n+1})+\chi'_{n+1}(W_{n+1}),\quad \BP\text{-a.s.} 
\end{align}
This shows \eqref{e5.18} and finishes the inductive proof of 
\eqref{e5.08} and \eqref{e5.18}. We shall only use \eqref{e5.08}.


For each $m\in\N$ we have that
\begin{align*}
\BP(\xi(W)>k)=\lim_{n\to\infty}\BP((\xi_1+\cdots+\xi_n)(W)>k)\le \BP(\chi_1>k).
\end{align*}
Therefore $\BP(\xi(W)<\infty)=1$,
so that almost surely $\xi_n(W)=0$ for all sufficiently
large $n$. Since the same holds for the point processes $\xi'_n$ we have by definition of
the recursion that $\chi_n=0$ for large enough $n$.
Therefore we obtain for each bounded measurable $f\colon \bN_W\to\R$
by bounded convergence
\begin{align}
  \BE[f(\xi)]=\lim_{n\to\infty}\BE\bigg[f\bigg(\sum^n_{m=1}\xi_m\bigg)\bigg]
=\lim_{n\to\infty}\BE\bigg[f\bigg(\chi_{n+1}+\sum^n_{m=1}\xi_m\bigg)\bigg],
\end{align}
so that $\xi$ is Gibbs with PI $\kappa_\psi$.

Finally we let $n\in\N$ be the smallest integer
such that $\chi_n(Y_n)+\chi'_n(Y_n)=0$. Then $Y_{n+1}=W\setminus W_n$
and (Loc1) implies that
\begin{align*}
\kappa_{{W\setminus W_n},\psi+\xi_1+\cdots+\xi_n}
=\kappa_{{W\setminus W_n},\psi'+\xi'_1+\cdots+\xi'_n}.
\end{align*}
Hence $\xi_{n+1}=\xi'_{n+1}$ and $\xi_i=\xi'_i=0$ for $i\ge n+2$.
Therefore each point from $|\xi-\xi'|$ must lie in $W_n$.  By
definition all those points are connected via $\xi_{W_n}+\xi'_{W_n}$
to $\psi+\psi'$. The final assertion follows from
the definition \eqref{rec2} and Lemma \ref{l4.1}. 
\end{proof}

\bigskip
The following lemma has been used in the preceding proof.

\begin{lemma}\label{l5.3} Let $W\in\cX_0$ and suppose that 
$\xi$ is a Gibbs process on $W$. Let $S$ be a graph-measurable
mapping from $\Omega$ into $\cX\cap W$.
Then 
$\int\BP(\xi_S+\mu_{W\setminus S}\in\cdot)\,\Pi_\lambda(\mathrm{d}\mu)
\ll \Pi_{\lambda_W}$.
\end{lemma}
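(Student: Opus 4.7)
The plan is to realise the measure in the statement as the distribution of $\eta := \xi_S + \rho_{W\setminus S}$, where $\rho := \mu_W$ (arising from the outer $\Pi_\lambda$-integration) is a Poisson process of intensity $\lambda_W$ independent of $(\xi,S)$. Since $\eta\le\xi+\rho$ as counting measures, absolute continuity of the law of $\eta$ will follow by a factorial-moment argument from absolute continuity of the law of $\xi$ with respect to $\Pi_{\lambda_W}$.

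For the latter, $\xi$ being a Gibbs process on $W$ is concentrated on $W$ and satisfies the DLR equation \eqref{edlr}. Applying \eqref{edlr} with $B=W$ and $\xi_{W^c}=0$ gives $\BP(\xi\in\cdot) = \Pi_{W,0}$, which by \eqref{eGibbsmeasure} is absolutely continuous with respect to $\Pi_{\lambda_W}$, whether $Z_W(0)$ is finite or infinite. Consequently each factorial moment measure $\alpha^{(k)}_\xi$ is $\lambda^k$-absolutely continuous: if $\lambda^k(B)=0$, then $\mu^{(k)}(B)=0$ for $\Pi_{\lambda_W}$-a.e.\ $\mu$ (Poisson factorial moment measures equal $\lambda^k$), and absolute continuity of the law of $\xi$ propagates this to $\BE[\xi^{(k)}(B)]=0$. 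The same holds trivially for $\rho$. Decomposing every ordered $n$-tuple of distinct points of $\xi+\rho$ according to which coordinates come from $\xi$ and which from $\rho$ writes $(\xi+\rho)^{(n)}$ as a sum over $I\subset\{1,\ldots,n\}$ of products $\xi^{(|I|)}(\mathrm{d}x_I)\,\rho^{(n-|I|)}(\mathrm{d}x_{I^c})$; taking expectations and using the independence of $\xi$ and $\rho$, each summand becomes a product $\alpha^{(|I|)}_\xi\otimes\alpha^{(n-|I|)}_\rho$ with coordinates appropriately placed, hence is absolutely continuous with respect to $\lambda^n$. Therefore $\alpha^{(n)}_{\xi+\rho}\ll\lambda^n$, and the pointwise bound $\eta^{(n)}\le(\xi+\rho)^{(n)}$ yields $\alpha^{(n)}_\eta\ll\lambda^n$ for every $n$.

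Finally, given any measurable $A$ with $\Pi_{\lambda_W}(A)=0$, the symmetric set $A_n:=\{(x_1,\ldots,x_n)\in W^n : x_i \text{ pairwise distinct},\ \sum_i\delta_{x_i}\in A\}$ satisfies $\lambda^n(A_n)=0$ for every $n$. On the event $\{\eta(W)=n\}$, every ordering of $\eta$'s (simple, because $\xi_S$ and $\rho_{W\setminus S}$ sit on disjoint regions) $n$ points lies in $A_n$ precisely when $\eta\in A$, so $\eta^{(n)}(A_n)=n!\,\I\{\eta\in A\}$ there, and
\begin{equation*}
\BP(\eta\in A,\,\eta(W)=n)\le \frac{\BE[\eta^{(n)}(A_n)]}{n!}=\frac{\alpha^{(n)}_\eta(A_n)}{n!}=0;
\end{equation*}
summing over $n$ yields $\BP(\eta\in A)=0$, which is exactly the claimed absolute continuity. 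The most delicate step is the combinatorial expansion of $(\xi+\rho)^{(n)}$ together with the independence-based factorisation of its expected cross terms; the rest is routine point-process bookkeeping.
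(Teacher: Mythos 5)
Your proposal is correct, and it takes a genuinely different route from the paper's proof. The paper's argument hinges on the deletion-tolerance theorem of Holroyd and Soo \cite{HoSoo13}, which gives $\BP(\xi_S\in\cdot)\ll\BP(\xi\in\cdot)$ even for a random, graph-measurable $S$; after observing that $\mu_{W\setminus S}=\mu_W$ on $\{\mu(S)=0\}$ (so that the inner $\Pi_\lambda$-integral is dominated by $e^{\lambda(W)}$ times the same integral with $\mu_{W\setminus S}$ replaced by $\mu_W$), the proof reduces the statement to a superposition identity $\int f(\psi_W+\mu_W)\,\Pi_\lambda(\mathrm{d}\mu)\,\Pi_\lambda(\mathrm{d}\psi)=\int f(\mu_W)\,\Pi_{2\lambda}(\mathrm{d}\mu)$ and the equivalence of $\Pi_{\lambda_W}$ and $\Pi_{2\lambda_W}$. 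You bypass the Holroyd--Soo input entirely: instead of controlling the law of $\xi_S$ directly, you use the elementary domination $\eta=\xi_S+\rho_{W\setminus S}\le\xi+\rho$ (which holds pathwise whatever $S$ is), establish $\BP(\xi\in\cdot)\ll\Pi_{\lambda_W}$ from the DLR equation at $B=W$, and then propagate absolute continuity to $\eta$ through the factorial moment measures, using the combinatorial expansion of $(\xi+\rho)^{(n)}$ over partitions of coordinates and the fact that for a finite simple point process on $W$ absolute continuity of all $\alpha^{(n)}$ with respect to $\lambda^n$ implies absolute continuity of the law with respect to $\Pi_{\lambda_W}$ when $\lambda$ is diffuse. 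The trade-off is clear: the paper's proof is shorter once one is willing to invoke a nontrivial result about point-process deletion tolerance, while yours is longer but entirely self-contained and uses only factorial-moment bookkeeping; both ultimately lean on the same absolute continuity of the Gibbs law with respect to the reference Poisson law.

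Two small points worth flagging, though neither is a gap: (i) simplicity of $\xi$ (needed for the identity $\eta^{(n)}(A_n)=n!\,\I\{\eta\in A\}$ on $\{\eta(W)=n\}$ and for the clean decomposition of $(\xi+\rho)^{(n)}$) follows from the multivariate GNZ equation with $m=2$, or directly from $\BP(\xi\in\cdot)\ll\Pi_{\lambda_W}$ and diffuseness of $\lambda$; you implicitly use this but it deserves a sentence. (ii) In the decomposition of $(\xi+\rho)^{(n)}$ you tacitly use that $\xi$ and $\rho$ have a.s.\ disjoint supports (which holds since they are independent and $\lambda$ is diffuse); without that the identification of ``which coordinates come from $\xi$'' would be ambiguous, although the identity $(\mu+\nu)^{(n)}=\sum_{J\subset[n]}\cdots$ is in fact valid for arbitrary counting measures and could be cited instead.
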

\begin{proof} Let $f\colon\bN\to\R_+$ be measurable such that
$\int f(\mu)\,\Pi_{\lambda_W}(\mathrm{d}\mu)=0$.
Since a Poisson process is completely independent 
and $\lambda(S)\le \lambda(W)<\infty$ we have
\begin{align*}
I&:=\BE\bigg[\int f(\xi_S+\mu_{W\setminus S})\,\Pi_\lambda(\mathrm{d}\mu)\bigg]\\
&=\BE\bigg[e^{\lambda(S)}
\int \I\{\mu(S)=0\}f(\xi_S+\mu_{W\setminus S})\,\Pi_\lambda(\mathrm{d}\mu)\bigg]
&\le e^{\lambda(W)}\BE\bigg[
\int f(\xi_S+\mu_{W})\,\Pi_\lambda(\mathrm{d}\mu)\bigg].
\end{align*}
By \cite[Theorem 1.1]{HoSoo13}
(applying to general Borel spaces),
$\BP(\xi_S\in\cdot)\ll \BP(\xi\in\cdot)$. Hence
we also have $\BP(\xi_S\in\cdot)\ll \Pi_{\lambda_W}$
and we let $g\colon \bN\to\R_+$ denote the corresponding density.
Then
\begin{align*}
I\le e^{\lambda(W)}\iint g(\psi_W)f(\psi_W+\mu_{W})\,\Pi_\lambda(\mathrm{d}\mu)\,\Pi_\lambda(\mathrm{d}\psi).
\end{align*}
Noting that
\begin{align*}
\iint f(\psi_W+\mu_{W})\,\Pi_\lambda(\mathrm{d}\mu)\,\Pi_\lambda(\mathrm{d}\psi)
=\int f(\mu_W)\,\Pi_{2\lambda}(\mathrm{d}\mu)
\end{align*}
and that $\Pi_{2\lambda_W}$ and $\Pi_{\lambda_W}$ are equivalent, 
we obtain that $I=0$. This concludes the proof.
\end{proof}

\begin{remark}\label{rmeasurability}\rm Consider the assumptions
of Theorem \ref{tdis}. We assert that the mapping
$(\omega,\psi,\psi')\mapsto (\xi(\omega),\xi'(\omega))$ is measurable.
Since measurability issues can be a little tricky
at times, we give here an explicit argument.
Let $\Psi$ be a point process on $\BX$ and $Z$ a
graph-measurable mapping from $\Omega$ into $\cX_0$.
Let $B\in\cX$. By \eqref{eTmeas} we have that
\begin{align*}
&T_{Z,\Psi}(\Phi_{Z\times\R_+})(B)=
\sum^\infty_{n=1}\int\I\{t_1\le p(x_1,\Psi),\ldots,t_n\le p(x_n,\Psi+\delta_{x_1}+\cdots+\delta_{x_{n-1}})\}\\
&\qquad\times\I\{x_1<\cdots<x_n\}\I\{x_1,\ldots,x_n\in Z\}\I\{(x_1,\ldots,x_{n-1})\in C_{n-1}\}\\
&\qquad\times\I\{\Phi(B_{n}(x_1,\ldots,x_n)\cap (Z\times\R_+))=0\}
\sum^n_{i=1} \I\{x_i\in B\}\,\Phi^{(n)}(\mathrm{d}(x_1,t_1,\ldots,x_n,t_n)).
\end{align*}
Here the order $<$ is allowed to depend measurably on $\omega\in\Omega$.
Writing $R_\omega$ for this order, the measurability  means that
$(\omega,x,y)\mapsto \I\{x R_\omega y\}$ is measurable.
It follows that $T_{Z,\Psi}(\Phi_{Z\times\R_+})(B)$ is a random variable which in turn
implies that $T_{Z,\Psi}(\Phi_{Z\times\R_+})$ is a point process.
Using this fact together with the recursive construction of $(\xi,\xi')$
shows that $(\xi,\xi')$ is indeed jointly measurable in $\omega\in\Omega$ and
the boundary conditions $\psi$ and $\psi'$.
\end{remark}

\section{Bounds for empty space probabilities}\label{sempty}

In this section we apply the previous results to obtain upper bounds
for empty space probabilities of a Gibbs process. 
These probabilities are important chracteristics of
a point process. In fact, they determine the distribution of a simple
point process (see e.g.\ \cite{LastPenrose17}) and have
many applications, for instance in stochastic geometry (see e.g.\ \cite{CSKM13})
or in quantifying the clustering of a point
process (see \cite{BY14}). 
We shall need an upper bound in the proof of Theorem \ref{hardtpo}.

A naive approach to bound the empty space
probability for a given boundary condition from above would be to
bound the partition function from below and to exploit the relation
\eqref{eempty}. This is often not very promising since the partition
function is hardly accessible. Our approach is different. We start
with the embedding representation of a (finite) Gibbs process from
Section 5 that involves the function $p$ from \eqref{ethprob}. This
enables us to use bounds for fractions of partition functions (which
are often considerably easier to find than bounds for the partition
function itself) to obtain bounds for empty space probabilities.

Let $(\BX,\cX)$ be a Borel space equipped with a $\sigma$-finite and
diffuse measure $\lambda$.
We consider a Gibbs process $\xi$ on 
$\BX$ with PI $\kappa$ satisfying the cocycle condition \eqref{ecocycle}
and assume that (Dom2) and (Loc1) hold.

Let $\eta$ be a Poisson process
on $\BX$ with intensity measure $\alpha\lambda$
and let $B\in\cX_0$.
By \cite[Theorem 1.1]{GeorKun97} (and \cite[Lemma 2.5]{BHLV20})
the process $\xi_B$,
conditioned on $\xi_{B^c}$, is stochastically dominated by
$\eta$. In particular
\begin{align}\label{hardlowbou}
\BP(\xi(B)=0 \mid \xi_{B^c})\ge e^{-(\alpha\lambda)(B)},\quad \BP\text{-a.s.} 
\end{align}
The next result provides an inequality in the converse direction.
For $B\in\cX_0$ we set 
\begin{align*}
B_\xi:=\{x\in B: x\not\sim \xi_{B^c}\}=B\setminus \bigcup_{y\in\xi_{B^c}} N_y.
\end{align*}

\begin{theorem}\label{tempty1} Suppose that $\xi$ is a Gibbs process whose
PI $\kappa$ satisfies (Dom2) and (Loc1).  
Let $B\in\cX_0$. Then   
\begin{align}\label{harduppbou}
  \BP(\xi(B)=0 \mid \xi_{B^c})
  \le \exp\bigg[-\int \I\{x\in B_\xi\}e^{-(\alpha\lambda)(N_x\cap B)}\kappa(x,0)\,\lambda(\mathrm{d}x)\bigg],
\quad \BP\text{-a.s.}
\end{align}
\end{theorem}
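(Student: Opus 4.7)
The plan is to combine the embedding identity from Corollary \ref{c2.4} with the classical stochastic domination of Gibbs measures by Poisson processes. Conditionally on $\xi_{B^c}$, the process $\xi_B$ is a Gibbs process on $B$ with Papangelou intensity $\hat\kappa := \kappa_{B,\xi_{B^c}}$, and by definition of the Hamiltonian one has $\hat H(\mu,0) = H(\mu,\xi_{B^c})$, whence $\hat Z_B(0) = Z_B(\xi_{B^c})$. Applying Corollary \ref{c2.4} to $\hat\kappa$ at $\psi=0$ and comparing with \eqref{eempty} will therefore give
\begin{align*}
\BP(\xi(B) = 0 \mid \xi_{B^c}) = \exp\bigg[-\int_B \hat p(x,0)\,\lambda(\mathrm{d}x)\bigg],
\end{align*}
where $\hat p$ is the function defined by \eqref{ethprob} with $\hat\kappa$ in place of $\kappa$. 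Everything then reduces to producing a pointwise lower bound for $\hat p(x,0)$ on $B_\xi$.

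The next step will be to rewrite $\hat p(x,0)$ as an integral against a Gibbs measure. With $C := (x,\infty)\cap B$ under the order fixed in Section \ref{secthinning}, definition \eqref{ethprob} together with the Hamiltonian identity above reads
\begin{align*}
\hat p(x,0) = \kappa(x,\xi_{B^c})\,\frac{Z_C(\xi_{B^c}+\delta_x)}{Z_C(\xi_{B^c})}.
\end{align*}
Dividing the integrand of $Z_C(\xi_{B^c}+\delta_x)$ by $e^{-H(\mu,\xi_{B^c})}$ and renormalising by the Gibbs measure $\Pi_{C,\xi_{B^c}}$ identifies the ratio as an expectation under $\Pi_{C,\xi_{B^c}}$; the symmetry of $\kappa_{m+1}$ (which holds $\Pi_\lambda$-a.e.\ by \eqref{esymmetric}) then collapses the resulting density to $\kappa(x,\xi_{B^c}+\mu)/\kappa(x,\xi_{B^c})$, yielding
\begin{align*}
\hat p(x,0) = \int \kappa(x,\xi_{B^c}+\mu)\,\Pi_{C,\xi_{B^c}}(\mathrm{d}\mu).
\end{align*}

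Finally, for $x\in B_\xi$ the point $x$ has no $\sim$-neighbour in $\xi_{B^c}$, so on the event $\{\mu:\mu(N_x)=0\}$ the cluster $C(x,\xi_{B^c}+\mu)$ is empty and (Loc1) forces $\kappa(x,\xi_{B^c}+\mu)=\kappa(x,0)$. The probability of this event under $\Pi_{C,\xi_{B^c}}$ will be bounded below via the stochastic domination of $\Pi_{C,\xi_{B^c}}$ by $\Pi_{\alpha\lambda_C}$ from \cite{GeorKun97}, already invoked in \eqref{hardlowbou}; this gives $\Pi_{C,\xi_{B^c}}(\{\mu(N_x)=0\})\ge e^{-(\alpha\lambda)(N_x\cap C)}\ge e^{-(\alpha\lambda)(N_x\cap B)}$, and hence $\hat p(x,0)\ge\kappa(x,0)\,e^{-(\alpha\lambda)(N_x\cap B)}$ on $B_\xi$. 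Restricting the integral in the first display to $B_\xi$ by nonnegativity of $\hat p$ then delivers the claimed bound. The main obstacle I anticipate is the cocycle-symmetry manipulation leading to the integral formula for $\hat p(x,0)$: one must invoke the a.e.-symmetry of $\kappa_m$ from \eqref{esymmetric} inside an integral against $\Pi_{\lambda_C}$, which is legitimate because $\Pi_{C,\xi_{B^c}}\ll\Pi_{\lambda_C}$, but care is needed with the $\Pi_\lambda$-null exceptional set.
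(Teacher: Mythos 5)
Your proposal is correct and arrives at exactly the bound in the theorem, but it reorganizes the argument compared with the paper. Both proofs begin from the same identity $\BP(\xi(B)=0\mid\xi_{B^c})=\exp[-\int_B p_{B,\xi_{B^c}}(x,0)\,\lambda(\mathrm{d}x)]$; the divergence is in how one bounds $p_{B,\xi_{B^c}}(x,0)$ from below. The paper works directly with the ratio of partition functions: it lower-bounds $Z_{B_x}(\xi_{B^c}+\delta_x)$ by restricting the Poisson integration to $\{\mu(N_x)=0\}$, using (Loc1) in the form of the Hamiltonian identity \eqref{ekillx} and complete independence of the Poisson process; and it upper-bounds $Z_{B_x}(\xi_{B^c})$ by applying (Dom2) to the factor $e^{-H(\mu_{N_x},\,\cdot\,)}$ and the Poisson probability-generating functional. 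This is entirely self-contained. You instead re-express $p_{B,\xi_{B^c}}(x,0)$ via the cocycle identity as the Gibbs expectation $\int\kappa(x,\xi_{B^c}+\mu)\,\Pi_{C,\xi_{B^c}}(\mathrm{d}\mu)$ with $C=(x,\infty)\cap B$, then restrict to the decreasing event $\{\mu(N_x\cap C)=0\}$, collapse $\kappa(x,\xi_{B^c}+\mu)$ to $\kappa(x,0)$ on that event by (Loc1) and $x\in B_\xi$, and invoke stochastic domination of $\Pi_{C,\xi_{B^c}}$ by $\Pi_{\alpha\lambda_C}$ (the \cite{GeorKun97} result the paper quotes for \eqref{hardlowbou}) to lower-bound the void probability. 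Your route is conceptually cleaner — it exposes $p$ as an expected Papangelou intensity — at the cost of importing \cite{GeorKun97} as a black box, whereas the paper's explicit computation is really a hands-on proof of the same void-probability bound and so stays inside the toolkit of Sections \ref{secGibbs}--\ref{secthinning}. Two minor remarks: the symmetry of $\kappa_m$ is exact here (the paper assumes the cocycle identity for \emph{all} $(x,y,\mu)$, see Section \ref{secass}), so the null-set caution in your last sentence is unnecessary; and rather than dividing by $\kappa(x,\xi_{B^c})$ one should simply multiply it onto $e^{-H(\mu,\xi_{B^c}+\delta_x)}$ to obtain $e^{-H(\mu,\xi_{B^c})}\kappa(x,\xi_{B^c}+\mu)$, which sidesteps any possible division by zero.
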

{\em Proof:} We have mentioned at the end of Section \ref{secGibbs}
that the conditional distribution $\BP(\xi_B\in\cdot\mid \xi_{B^c})$
is that of a Gibbs process with PI $\kappa_{B,\xi_{B^c}}$.
By assumption (Dom2) and Remark \ref{rbounded}
we can apply Theorem \ref{tembedding} (with $\kappa=\kappa_{B,\xi_{B^c}}$
and $\BX=B$) to construct this Gibbs distribution.
By definition of the embedding operator and the properties
of the underlying Poisson process we have that
\begin{align}\label{eemptyspace}
\BP(\xi(B)=0 \mid \xi_{B^c})
=\exp\bigg[-\int \I\{x\in B\}p_{B,\xi_{B^c}}(x,0)\,\lambda(\mathrm{d}x)\bigg],\quad \BP\text{-a.s.}
\end{align}
Recall from \eqref{ethprob} that
\begin{align*}
p_{B,\xi_{B^c}}(x,0)=\kappa(x,\xi_{B^c}) 
\frac{Z_{B_{x}}(\xi_{B^c}+\delta_{x})}{Z_{B_x}(\xi_{B^c})},
\quad x\in B,
\end{align*}
where $B_x=B\cap(x,\infty)$ and the intervals are defined with respect to the order
(induced by the Borel isomorphism) on $\BX$.
Assume that $x\in B_\xi$.
If  $\mu\in\bN_B$ satisfies $\mu(N_x)=0$, then assumption (Loc1)
shows that
\begin{align}\label{ekillx}
H(\mu,\xi_{B^c}+\delta_{x})=H(\mu,\xi_{B^c}).
\end{align}
Therefore
\begin{align*}
Z_{B_x}(\xi_{B^c}+\delta_{x})
&\ge \int \I\{\mu(N_x)=0\}e^{-H(\mu,\xi_{B^c})}\,\Pi_{\lambda_{B_x}}(\mathrm{d}\mu),
\end{align*}
Since a Poisson process is completely independent
we obtain that
\begin{align*}
Z_{B_x}(\xi_{B^c}+\delta_{x})&\ge 
e^{-\lambda(N_x\cap B_x)} \int e^{-H(\mu_{N_x^c},\xi_{B^c})}\,\Pi_{\lambda_{B_x}}(\mathrm{d}\mu)\\
&= e^{-\lambda(N_x\cap B_x)} Z_{B_x\setminus N_x}(\xi_{B^c}).
\end{align*} 
Using the definition of the Hamiltonian we further obtain that
\begin{align*}
Z_{B_x}(\xi_{B^c})
&=\int e^{-H(\mu_{N_x^c},\xi_{B^c})} e^{-H(\mu_{N_x},\xi_{B^c}+\mu_{N_x^c})} 
\,\Pi_{\lambda_{B_x}}(\mathrm{d}\mu)\\
&\le \int e^{-H(\mu_{N_x^c},\xi_{B^c})} \prod_{y\in\mu_{N_x}}\alpha(y) 
\,\Pi_{\lambda_{B_x}}(\mathrm{d}\mu),
\end{align*}
where we have used our assumption (Dom2).
By the independence properties of a Poisson process this implies
\begin{align*}
Z_{B_x}(\xi_{B^c})
&\le \exp\bigg[\int \I\{y\in N_x\cap B_x\} (\alpha(y)-1)\,\lambda(\mathrm{d}y)\bigg]
Z_{B_x\setminus N_x}(\xi_{B^c}).
\end{align*}
Therefore we obtain for $x\in B_\xi$ 
that 
\begin{align*}
p_{B,\xi_{B^c}}(x,0)
&\ge \exp\bigg[-\lambda(N_x\cap B_x)+\int \I\{y\in N_x\cap B_x\} (1-\alpha(y))\,\lambda(\mathrm{d}y)\bigg]
\kappa(x,0)\\
&\ge e^{-(\alpha\lambda)(N_x\cap B)}\kappa(x,0).
\end{align*}
Inserting this into  \eqref{eemptyspace} gives the result. \qed

\bigskip

\begin{remark}\rm If $N_x=\{x\}$ for each $x\in\BX$, then
(Loc1) means that $\xi$ is a Poisson process with
intensity measure $\kappa(x,0)\lambda(\mathrm{d}x)$. 
Then \eqref{harduppbou} is an identity. In a sense
this boundary case is obtained in the limit as $N_x\downarrow\{x\}$
for each $x\in\BX$.
\end{remark}

\begin{remark}\rm It is interesting to note
that the preceding theorem holds for any reference measure $\lambda$.
In particular they do not require 
the clusters $C(x,\eta)$ (or $C(x,\xi)$) to be finite.
This is in contrast to the assumptions of Lemma 3.3
in the seminal paper \cite{SchrYuk13}, which applies only
to a restricted range of parameters.
\end{remark}

In the following example we apply Theorem \ref{tempty1} to general Gibbs
particle processes with deterministically bounded grains.

\begin{example}\label{expartic} \rm 
  Let $\mathbb{X}:= \mathcal{C}^{d}$ denote the space of compact and
  nonempty subsets (particles) of $\R^d$. We equip $\mathcal{C}^{d}$
  with the Hausdorff metric and the associated Borel $\sigma$-field
  $\mathcal{B}(\mathcal{C}^d)$. For $K \in \mathcal{C}^d$ let $z(K)$
  denote the center of the circumscribed sphere of $K$. Define
\begin{align*}
\lambda(\cdot):=\iint \I\{K+x \in \cdot\}\mathbb{Q}(\mathrm{d}K) \lambda_d(\mathrm{d}x),
\end{align*}
where $\lambda_d$ denotes Lebesgue measure on $\R^d$ and $\mathbb{Q}$
is a probability measure on $\mathcal{C}^{d}$ satisfying
\begin{align*}
\mathbb{Q}(\{K \in \mathcal{C}^d:C \subset B(o,R)\})=1
\end{align*}
for some fixed $R>0$, where $o$ denotes the origin in $\mathbb{R}^d$. Assume that (Dom2) holds.
Assume that (Loc1) holds
with respect to the relation $\sim$ defined by 
$K \sim L$ if $K \cap L \neq \emptyset$.
Finally we assume the translation invariance
$\kappa(K,\mu)=\kappa(K-x,\theta_x\mu)$ for $K \in \mathcal{C}^d$,
$x \in \R^d$ and $\mu \in \mathbf{N}$, where $\theta_x\mu$ is defined by
$\theta_x\mu(B):=\mu(B+x)$ for measurable
sets $B\subset\mathbb{X}$.
Let $\mathcal{C}^d_t:=\{K \in \mathcal{C}^d:\,z(K) \in B(o,t) \}$ for
$t\ge 2R$. Then Theorem \ref{tempty1} implies that
\begin{align}\label{harduppbou3}
\BP(\xi(\mathcal{C}^d_t)=0 \mid \xi_{B^c})\le e^{-c_0(t-2R)^d},\quad \BP\text{-a.s.},\,t\ge 2R,
\end{align}
where 
\begin{align} \label{exPartc0}
 c_0:=e^{-\alpha\kappa_d 2^d R^d } \int \kappa(K,0)\,\mathbb{Q}(\mathrm{d}K).
\end{align}
If $c_0>0$ (that is if $\int \kappa(K,0)\,\mathbb{Q}(\mathrm{d}K)>0$) then we have 
\begin{align}\label{exPoilik}
 \limsup_{t \to \infty}  t^{-d} \log \BP(\xi(\mathcal{C}^d_t)=0 \mid \xi_{B^c})\le -c_0\quad \BP\text{-a.s.},
\end{align}
\end{example}

\begin{remark} \rm
The authors of \cite{SchrYuk13}  called the property \eqref{exPoilik} {\em Poisson-like}
and established it under assumptions similar to those in
Corollary \ref{cpoapprox}. Our method does not require the absence of percolation
or a related subcritcality property.
\end{remark}

\bigskip

In the following result we replace the assumption (Loc1)
by the stronger assumption (Loc2).

\begin{theorem}\label{tempty2} Suppose that $\xi$ is a Gibbs process whose
PI $\kappa$ satisfies (Dom2) and (Loc2). 
Let $B\in\cX_0$. Then 
\begin{align}\label{harduppbou2}
\BP(\xi(B)=0 \mid \xi_{B^c})
\le \exp\bigg[-\int \I\{x\in B\}e^{-(\alpha\lambda)(N_x\cap B)}\kappa(x,\xi_{B^c})\,\lambda(\mathrm{d}x)\bigg],
\quad \BP\text{-a.s.}
\end{align}
\end{theorem}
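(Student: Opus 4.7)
The plan is to imitate the proof of Theorem \ref{tempty1}, modifying only the step where (Loc1) was applied: the stronger assumption (Loc2) gives the crucial Hamiltonian cancellation on all of $B$, not merely on the ``free'' set $B_\xi$. This lets us retain the factor $\kappa(x,\xi_{B^c})$ (instead of reducing it to $\kappa(x,0)$) and extend the integration from $B_\xi$ to all of $B$.

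As in the proof of Theorem \ref{tempty1}, I would first apply Theorem \ref{tembedding} to the conditional distribution $\BP(\xi_B\in\cdot\mid\xi_{B^c})$, which is Gibbs with PI $\kappa_{B,\xi_{B^c}}$, to write
\begin{align*}
\BP(\xi(B)=0\mid \xi_{B^c})=\exp\bigg[-\int \I\{x\in B\}p_{B,\xi_{B^c}}(x,0)\,\lambda(\mathrm{d}x)\bigg],\quad \BP\text{-a.s.},
\end{align*}
and recall from \eqref{ethprob} that, with $B_x:=B\cap(x,\infty)$,
\begin{align*}
p_{B,\xi_{B^c}}(x,0)=\kappa(x,\xi_{B^c})\,\frac{Z_{B_x}(\xi_{B^c}+\delta_x)}{Z_{B_x}(\xi_{B^c})}.
\end{align*}
The task then reduces to lower bounding the ratio on the right for every $x\in B$.

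The new ingredient is the observation that, under (Loc2), the identity
\begin{align*}
H(\mu,\xi_{B^c}+\delta_x)=H(\mu,\xi_{B^c}) \quad\text{whenever $\mu\in\bN_{B_x}$ satisfies $\mu(N_x)=0$}
\end{align*}
holds for \emph{every} $x\in B$, without the restriction $x\in B_\xi$. Indeed, writing $\mu=\delta_{x_1}+\cdots+\delta_{x_m}$, each factor $\kappa(x_i,\nu+\delta_{x_1}+\cdots+\delta_{x_{i-1}})$ in $\kappa_m$ depends on its measure argument only through its restriction to $N_{x_i}$; the hypothesis $\mu(N_x)=0$ gives $x_i\notin N_x$, hence by symmetry $x\notin N_{x_i}$, for every $i$, so adding the point $x$ to the second argument is invisible to every factor. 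Under (Loc1) this cancellation could only be secured when $x\not\sim \xi_{B^c}$, since otherwise adding $\delta_x$ might extend the cluster of some $x_i$ via a chain through $\xi_{B^c}$.

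Granted the cancellation on all of $B$, the remainder of the proof is verbatim from Theorem \ref{tempty1}. Splitting the Poisson integral according to $\mu_{N_x^c}$ and $\mu_{N_x}$ and using the cancellation on the numerator gives
\begin{align*}
Z_{B_x}(\xi_{B^c}+\delta_x)\ge e^{-\lambda(N_x\cap B_x)}Z_{B_x\setminus N_x}(\xi_{B^c}),
\end{align*}
while (Dom2) yields
\begin{align*}
Z_{B_x}(\xi_{B^c})\le \exp\bigg[\int \I\{y\in N_x\cap B_x\}(\alpha(y)-1)\,\lambda(\mathrm{d}y)\bigg]Z_{B_x\setminus N_x}(\xi_{B^c}).
\end{align*}
Dividing produces $p_{B,\xi_{B^c}}(x,0)\ge e^{-(\alpha\lambda)(N_x\cap B)}\kappa(x,\xi_{B^c})$ for every $x\in B$, and substituting this into the embedding formula yields \eqref{harduppbou2}. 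The only substantive point is the opening observation above; I do not anticipate any genuine obstacle beyond checking the symmetry step carefully.
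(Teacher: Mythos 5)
Your argument is correct and coincides with the paper's own (very terse) proof: the paper simply observes that under (Loc2) the Hamiltonian cancellation $H(\mu,\xi_{B^c}+\delta_x)=H(\mu,\xi_{B^c})$ for $\mu(N_x)=0$ holds for \emph{all} $x\in B$ rather than only $x\in B_\xi$, and then the remainder of the proof of Theorem~\ref{tempty1} applies verbatim, yielding $\kappa(x,\xi_{B^c})$ in place of $\kappa(x,0)$ and the full integration domain $B$. Your explanation of why (Loc2), via the symmetry $x_i\notin N_x\Leftrightarrow x\notin N_{x_i}$, renders the added point invisible to every factor of $\kappa_m$ is a correct and welcome elaboration of the step the paper leaves implicit.
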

{\em Proof:} Let $x\in B$ and $\mu\in N_B$ such that
$\mu(N_x)=0$. By assumption (Loc2) the identity
\eqref{ekillx} remains true,
so that the proof of Theorem \ref{tempty1}  applies. \qed

\begin{corollary}\label{cempty2} Let the assumptions of Theorem \ref{tempty2}
  be satisfied. Assume moreover that there exists a measurable
$\kappa'\colon \BX\times\bN\to\R_+$ such that $\kappa\ge \kappa'$ and
$\kappa'(x,\cdot)$ is decreasing for each $x\in\BX$. Then
\begin{align}\label{e987}
\BP(\xi(B)=0)
\le \BE \exp\bigg[-\int \I\{x\in B\}e^{-(\alpha\lambda)(N_x\cap B)}\kappa'(x,\eta_{B^c})\,\lambda(\mathrm{d}x)\bigg].
\end{align}
\end{corollary}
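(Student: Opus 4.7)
The plan is to chain three simple steps. First, I take expectations in the conditional bound \eqref{harduppbou2} of Theorem \ref{tempty2}, obtaining
\[
\BP(\xi(B)=0) \le \BE \exp\bigg[-\int \I\{x\in B\}\, e^{-(\alpha\lambda)(N_x\cap B)}\, \kappa(x,\xi_{B^c})\,\lambda(\mathrm{d}x)\bigg],
\]
and then replace $\kappa$ by the smaller function $\kappa'$: since $\kappa \ge \kappa' \ge 0$ pointwise, the integrand decreases, the non-positive exponent increases, and by monotonicity of $\exp$ the bound only grows, so the same inequality persists with $\kappa'$ in place of $\kappa$.

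Second, I verify that the functional
\[
g(\mu) := \exp\bigg[-\int \I\{x\in B\}\, e^{-(\alpha\lambda)(N_x\cap B)}\, \kappa'(x,\mu_{B^c})\,\lambda(\mathrm{d}x)\bigg], \qquad \mu \in \bN,
\]
is increasing on $\bN$ with respect to the natural partial order. Indeed, if $\mu \le \mu'$ then $\mu_{B^c} \le \mu'_{B^c}$, and the assumption that $\kappa'(x,\cdot)$ is decreasing gives $\kappa'(x,\mu_{B^c}) \ge \kappa'(x,\mu'_{B^c})$. Integrating against the non-negative weight $\I\{x\in B\}\, e^{-(\alpha\lambda)(N_x\cap B)}\,\lambda(\mathrm{d}x)$ and including the minus sign reverses this inequality, and exponentiating preserves the direction, so $g(\mu) \le g(\mu')$.

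To conclude, recall from \cite[Theorem 1.1]{GeorKun97} (cited already in \eqref{hardlowbou}) that $\xi$ is stochastically dominated by the Poisson process $\eta$ with intensity measure $\alpha\lambda$. By Strassen's theorem one may realise $\xi$ and $\eta$ on a common probability space with $\xi \le \eta$ almost surely, hence $g(\xi) \le g(\eta)$ a.s.\ since $g$ is measurable and increasing. Taking expectations and combining with the first step yields \eqref{e987}. The only real care needed is the bookkeeping of inequality directions induced by the minus sign in the exponent, which is precisely what aligns the hypothesis that $\kappa'(x,\cdot)$ is decreasing with the correct direction of stochastic domination; beyond this the proof is essentially immediate from Theorem \ref{tempty2}.
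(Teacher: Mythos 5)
Your proof is correct and takes essentially the same route as the paper: replace $\kappa$ by $\kappa'$ in the conditional bound \eqref{harduppbou2} (valid since $\kappa\ge\kappa'\ge 0$), observe that the resulting functional of $\xi_{B^c}$ is increasing because $\kappa'(x,\cdot)$ is decreasing, and pass from $\xi$ to the dominating Poisson process $\eta$. The paper compresses the same argument into two lines, invoking stochastic domination of $\xi_{B^c}$ by $\eta_{B^c}$ without spelling out the Strassen coupling or the monotonicity check that you supply.
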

{\em Proof:} 
The point process $\xi_{B^c}$ is stochastically dominated by $\eta_{B^c}$. Hence the
result follows upon replacing in \eqref{harduppbou2} $\kappa$ by $\kappa'$
and then taking expectations.
\qed

\begin{example}\label{exdecaystrauss} \rm 
Assume that $\BX:=\R^d\times\R_+$
equipped with the product measure $\lambda:=\lambda_d\otimes\BQ$,
where $\BQ$ is a probability measure on $\R_+$ satisfying
\begin{align}\label{eintQ}
\int s^d\,\BQ(\mathrm{d}s)<\infty.
\end{align}
Assume that $\xi$ is a Gibbs process on $\BX$
with PI $\kappa$ satisfying (Dom2) for some constant $\alpha>0$.
We define a relation $\sim$ on $\BX$ by 
\begin{align}\label{esim}
(x,r)\sim (y,s) \Longleftrightarrow \|x-y\|\le r+s
\end{align}
and assume that (Loc2) holds. 
Assume  that there exist $\beta'>0$ and $\beta\in[0,1)$ such that
\begin{align}\label{elowstrauss}
  \kappa(x,r,\mu)\ge \beta'\beta^{\mu(N_{(x,r)})},\quad (x,r,\mu)\in\R^d\times\R_+\times\bN.
\end{align} 
These assumptions are satisfied by the Strauss process discussed
in Example \ref{exstrauss}. 
We apply Corollary \ref{cempty2} and
take $B:=B_t\times\R_+$, where
$B_t:=B(o,t)$ is the ball with radius $t>0$, centred at the
origin $o$. Then
\begin{align}\label{harduppbou4}
  \BP(\xi(B_t\times\R_+)=0)
  \le \BE \exp\bigg[-\iint \I\{x\in B_t\}
  e^{-(\alpha\lambda)(N_{(x,u)}}\beta'\beta^{\eta(N_{(x,r)}\cap (B_t^c\times\R_+))}\,\mathrm{d}x\,\BQ(\mathrm{d}r)\bigg].
\end{align}
We have for all $(x,r)\in\R^d\times\R_+$ that
\begin{align*}
\alpha\lambda(N_{(x,r)})=\alpha\iint\I\{\|y-x\|<r+s\}\,\mathrm{d}y\,\BQ(\mathrm{d}s)
=\alpha \int \kappa_d(r+s)^d\,\BQ(\mathrm{d}s)\le cr^d
\end{align*}
for some $c>0$.   Applying Jensen's inequality to the probability measure
$\frac 1{\kappa_d t^d} \lambda_{B_t}\otimes\BQ$, we find that
\begin{align}\label{ebound7}
\BP(\xi(B_t\times\R_+)=0)
\le \frac{1}{\kappa_d t^d} \BE  \iint \I\{x\in B_t\} \exp\big[-\beta'\kappa_d t^d e^{-cr^d}
\beta^{Z_{t,x,r}}\big]\,\mathrm{d}x\,\BQ(\mathrm{d}r),
\end{align}
where $Z_{t,x,r}$ has a Poisson distribution with parameter
$\alpha\lambda(N_{(x,r)}\cap (B_t^c\times\R_+))$.

We treat \eqref{ebound7} distinguishing by the value of $\beta^{Z_{t,x,r}}$. Let $a\in(0,d)$. Then
\begin{align*}
\frac{1}{\kappa_dt^d}\BE&\iint\I\{t^d\beta^{Z_{t,x,r}}\ge t^a\} \I\{x\in B_t\} \exp\big[-\beta'\kappa_dt^de^{-cr^d}
\beta^{Z_{t,x,r}}\big]\,\mathrm{d}x\,\BQ(\mathrm{d}r)\\
&\le\frac{1}{\kappa_dt^d}\iint\I\{t^d\beta^{Z_{t,x,r}}\ge t^a\} \I\{x\in B_t\} 
\exp\big[-\beta'e^{-cr^d}t^{a}\big]\,\mathrm{d}x\,\BQ(\mathrm{d}r)\\
&\le \int \exp\big[-\beta'e^{-cr^d}t^{a}\big]\,\BQ(\mathrm{d}r).
\end{align*}
By Jensen's inequality this can be bounded by
\begin{align*}
\exp\Big[-\beta't^{a} \int e^{-cr^d}\,\BQ(\mathrm{d}r)\Big].
\end{align*}
We further have that
\begin{align*}
\frac{1}{\kappa_dt^d}\BE&\iint\I\{t^d\beta^{Z_{t,x,r}}\le t^a\} \I\{x\in B_t\} \exp\big[-\beta'\kappa_dt^de^{-cr^d}
\beta^{Z_{t,x,r}}\big]\,\mathrm{d}x\,\BQ(\mathrm{d}r)\\
&\le\frac{1}{\kappa_dt^d}\iint  
\I\{x\in B_t\}\BP(\beta^{-Z_{t,x,r}}> t^{d-a})\mathrm{d}x\,\BQ(\mathrm{d}r).
\end{align*}
By Markov's inequality
\begin{align*}
  \BP(\beta^{-Z_{t,x,r}}> t^{d-a})\le t^{a-d}\BE Z_{t,x,r}=t^{a-d}e^{\BE Z_{t,x,r}(\beta^{-1}-1)}.
\end{align*}
Hence the above integral is bounded by
\begin{align*}
\frac{1}{\kappa_dt^d}\iint  
\I\{x\in B_t\}e^{-c r^d}t^{a-d}\mathrm{d}x\,\BQ(\mathrm{d}r)=t^{a-d} \int e^{- c r^d} \,\BQ(\mathrm{d}r).
\end{align*}
Altogether we obtain polynomial decay of $\BP(\xi(B_t\times\R_+)=0)$.
\end{example}

\begin{remark}\rm The polynomial decay in Example \ref{exdecaystrauss} might be suboptimal.
But is has been derived under the minimal integrability assumption 
\eqref{eintQ}. This in contrast to Example \ref{expartic}, where the
the range of interaction between neighbors has been assumed to be 
deterministically bounded.
\end{remark}

\begin{remark} \rm Since (Dom2) and (Loc2) hold for the Strauss
process, the continuum random cluster model and the
Widom-Rowlinson model, Theorem \ref{tempty1} and Theorem
\ref{tempty2} apply to these models. Moreover, they apply to Gibbs
processes with a pair potential if \eqref{eZS} holds and to the
area interaction process if $\mathbb{Q}([r_1,\infty))=1$ for some  $r_1>0$.
\end{remark} 

\section{Palm measures and thinnings}\label{secPalm}

To prepare our results on Poisson approximation
in the next section we define and briefly discuss
Palm measures and dependent thinnings of a Gibbs process.

Again we work in the general setting of Section \ref{secGibbs}
and let $(\BX,\cX)$ be a Borel space equipped
with a $\sigma$-finite measure $\lambda$. 
Let $\xi$ and $\chi$ be two point processes on $\BX$
and assume that $\chi$ has a $\sigma$-finite intensity
measure $\BE[\chi]$. The {\em Palm distributions} 
$\BP^{\xi|\chi}_x$ (of $\xi$ w.r.t.\ $\chi$), $x\in\BX$, are a family 
of probability measures on $\bN(\BX)$
such that $(x,A)\mapsto \BP^{\xi|\chi}_x(A)$ is a probability
kernel and
\begin{align}\label{ePalm}
\BE\int h(x,\xi)\,\chi(\mathrm{d}x)
=\int h(x,\mu)\,\BP^{\xi|\chi}_x(\mathrm{d}\mu)\,\BE[\chi](\mathrm{d}x)
\end{align}
for all measurable $h\colon\BX\times\bN(\BX)\to [0,\infty)$.
One can interpret $\BP^{\xi|\chi}_x(\cdot)$ as conditional
distribution given that $\chi$ has a point at $x$.
We refer to \cite{Kallenberg17} for the existence
and an in depth discussion.

Let $\xi$ be a Gibbs process on $\BX$ with
PI $\kappa$ satisfying the cocycle condition \eqref{ecocycle}.
Assume given a measurable function $g\colon\BX\times\bN\to\{0,1\}$
satisfying the hereditary property
\begin{align}\label{eher}
\{(y,\mu)\in\BX\times\bN:g(x,\mu+\delta_y)=0\}\subset
\{(y,\mu)\in\BX\times\bN:g(x,\mu)=0\},\quad x\in\BX.
\end{align}
Using $g$ we define a (measurable) thinning operator 
$\Gamma\colon\bN\to\bN$ by
\begin{align}\label{eGammaop}
\Gamma(\mu)(B):=\int\I\{x \in B\}g(x,\mu-\delta_x)\,\mu(\mathrm{d}x),\quad B\in\cX,\,\mu\in\bN.
\end{align}
Since $\Gamma(\xi)\le \xi$ we obtain from definition 
\eqref{ePalm} that
$\BP^{\xi|\Gamma(\xi)}_x(\{\mu\in\bN(\BX):\mu(\{x\}\ge 1\})=1$
for $\BE[\Gamma(\xi)]$-a.e.\ $x\in\BX$.
Later we shall need the following Gibbs property of $\BP^{\xi|\Gamma(\xi)}_x$.

\begin{lemma}\label{lpalm} The probability measure
$\BP^{\xi|\Gamma(\xi)}_x(\{\mu\in\bN:\mu-\delta_x\in\cdot\})$
is for $\BE[\Gamma(\xi)]$-a.e.\ $x\in\BX$
the distribution of a Gibbs process with PI $\kappa^x$ given by
\begin{align*}
\kappa^x(y,\mu):=\kappa(y,\mu+\delta_x)\frac{g(x,\mu+\delta_y)}{g(x,\mu)},
\quad (y,\mu)\in\BX\times\bN,
\end{align*}
where $0/0:=0$.
\end{lemma}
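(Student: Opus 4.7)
The strategy is to verify the GNZ equation for $\rho_x(\cdot):=\BP^{\xi|\Gamma(\xi)}_x(\{\mu:\mu-\delta_x\in\cdot\})$ with PI $\kappa^x$ for $\BE[\Gamma(\xi)]$-a.e.\ $x$, by testing against an arbitrary measurable $F\colon\BX\to\R_+$ and reducing everything back to the (multivariate) GNZ equation for $\xi$ via the Palm relation \eqref{ePalm}. Concretely, let $h\colon\BX\times\bN\to\R_+$ be measurable and write
\begin{align*}
\mathrm{L}(F)&:=\int F(x) \int\!\!\int h(y,\mu)\,\mu(\mathrm{d}y)\,\rho_x(\mathrm{d}\mu)\,\BE[\Gamma(\xi)](\mathrm{d}x),\\
\mathrm{R}(F)&:=\int F(x) \int\!\!\int h(y,\mu+\delta_y)\kappa^x(y,\mu)\,\lambda(\mathrm{d}y)\,\rho_x(\mathrm{d}\mu)\,\BE[\Gamma(\xi)](\mathrm{d}x).
\end{align*}
It suffices to prove $\mathrm{L}(F)=\mathrm{R}(F)$ for every such $F$.

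For $\mathrm{L}(F)$, the plan is to unfold the Palm distribution using \eqref{ePalm}, use $\Gamma(\xi)(\mathrm{d}x)=g(x,\xi-\delta_x)\,\xi(\mathrm{d}x)$, and recognize the resulting double integral via the factorial measure $\xi^{(2)}(\mathrm{d}(x,y))$ (using the convention for simple $\xi$). This yields
\[
\mathrm{L}(F)=\BE\int F(x)\,g(x,\xi-\delta_x)\,h(y,\xi-\delta_x)\,\xi^{(2)}(\mathrm{d}(x,y)).
\]
Apply the multivariate GNZ equation \eqref{eGNZmulti} with $m=2$ to obtain
\[
\mathrm{L}(F)=\BE\int F(x)\,g(x,\xi+\delta_y)\,h(y,\xi+\delta_y)\,\kappa(x,\xi)\kappa(y,\xi+\delta_x)\,\lambda^2(\mathrm{d}(x,y)).
\]

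For $\mathrm{R}(F)$, the same Palm unfolding gives
\[
\mathrm{R}(F)=\BE\int F(x)\,g(x,\xi-\delta_x)\int h(y,\xi-\delta_x+\delta_y)\kappa^x(y,\xi-\delta_x)\,\lambda(\mathrm{d}y)\,\xi(\mathrm{d}x).
\]
Apply the (univariate) GNZ equation \eqref{eGNZ} to the outer $\xi(\mathrm{d}x)$, and then use the key algebraic identity
\[
\kappa^x(y,\mu)\,g(x,\mu)=\kappa(y,\mu+\delta_x)\,g(x,\mu+\delta_y),
\]
which follows from the definition of $\kappa^x$ together with the hereditary property \eqref{eher} (the latter handles the case $g(x,\mu)=0$, for which both sides vanish). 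This brings $\mathrm{R}(F)$ into the form
\[
\BE\int F(x)\,\kappa(y,\xi+\delta_x)\,g(x,\xi+\delta_y)\,h(y,\xi+\delta_y)\,\kappa(x,\xi)\,\lambda(\mathrm{d}y)\,\lambda(\mathrm{d}x).
\]
A single application of the cocycle identity \eqref{ecocycle} (swapping $\kappa(x,\xi)\kappa(y,\xi+\delta_x)$ for $\kappa(y,\xi)\kappa(x,\xi+\delta_y)$ or, in this case, leaving the factors as they are after comparing) shows that $\mathrm{R}(F)$ equals $\mathrm{L}(F)$.

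The main obstacle is the careful algebraic bookkeeping: one has to correctly convert between $\xi(\mathrm{d}x)\,(\xi-\delta_x)(\mathrm{d}y)$ and $\xi^{(2)}(\mathrm{d}(x,y))$, apply GNZ in the right variable at the right time, and verify that the convention $0/0:=0$ is consistent via the hereditary property of $g$. Once the two double-integral representations are in hand, the match follows directly from the cocycle identity. The Gibbs conclusion for $\rho_x$ then follows from the GNZ-based characterization of Gibbs processes (Theorem \ref{tNguyenZessin}), since the cocycle condition for $\kappa^x$ is inherited from that of $\kappa$ together with the symmetry $g(x,\mu+\delta_y+\delta_z)$ in $(y,z)$.
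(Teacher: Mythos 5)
Your proposal is correct and is essentially the ``standard'' argument the paper alludes to after computing $\BE[\Gamma(\xi)]$. The paper's own proof is a two-line sketch: it records that $\BE[\Gamma(\xi)](\mathrm{d}x)=\BE[g(x,\xi)\kappa(x,\xi)]\,\lambda(\mathrm{d}x)$ is $\sigma$-finite and then says the rest is ``quite standard''; your proposal fills in the standard part by verifying the GNZ equation for $\rho_x$ via the Palm relation \eqref{ePalm} and the (multivariate) GNZ equation for $\xi$, exactly the intended route.

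Two small points of bookkeeping. First, your concluding sentence invokes Theorem~\ref{tNguyenZessin}, but that theorem goes the other way (DLR $\Rightarrow$ GNZ); it is not needed here, since the paper defines a Gibbs process \emph{by} the GNZ equation, so proving $\mathrm L(F)=\mathrm R(F)$ is already the full conclusion. Similarly, verifying the cocycle property of $\kappa^x$ is a nice sanity check but plays no role in the GNZ verification. Second, to pass from ``$\mathrm L(F)=\mathrm R(F)$ for all $F$ and all $h$'' to ``for $\BE[\Gamma(\xi)]$-a.e.\ $x$, the GNZ equation holds for all $h$ simultaneously,'' one needs the usual separability step: fix a countable determining class of functions $h$, obtain a single $\BE[\Gamma(\xi)]$-null set, and extend by monotone class. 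You gloss over this, as does the paper; it is routine. You also do not mention that $\BE[\Gamma(\xi)]$ is $\sigma$-finite (needed for the Palm distributions to exist), which the paper makes its opening observation; in the setting of Section~\ref{secpoissonapp} it follows at once from (Dom2).

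Your key identity $\kappa^x(y,\mu)\,g(x,\mu)=\kappa(y,\mu+\delta_x)\,g(x,\mu+\delta_y)$ is handled correctly: the hereditary property (read in the direction that $g(x,\mu)=0$ forces $g(x,\mu+\delta_y)=0$, which is how it is used throughout the paper and in the Mat\'ern example) rules out the problematic $1/0$ case, and in all remaining cases the identity is an elementary check. Your final observation that no cocycle swap is actually required — the two sides already present the same product $\kappa(x,\xi)\kappa(y,\xi+\delta_x)$ — is right; the hedging phrase about a cocycle application could simply be dropped.
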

{\em Proof:} By the GNZ equation \eqref{eGNZ}, the intensity
measure of $\Gamma(\xi)$ is given by
\begin{align*}
\BE[\Gamma(\xi)](B)
&=\BE \int\I\{x\in B\}g(x,\xi-\delta_x)\,\xi(\mathrm{d}x)\\
&=\BE \int\I\{x\in B\}g(x,\xi)\kappa(x,\xi)\,\lambda(\mathrm{d}x).
\end{align*}
This measure is $\sigma$-finite. The remainder of the proof
is quite standard; see e.g.\ \cite{BHLV20} for the case  $g\equiv 1$.
\qed

\section{Poisson approximation of Gibbsian functionals}\label{secpoissonapp}

In this section we  let $(\BY,\cY)$ be
a complete separable metric space
equipped with the Borel $\sigma$-field $\cY$ and some
probability measure $\BQ$. We consider the product space $\BX:=\R^d\times\BY$
equipped with the product measure $\lambda:=\lambda_d\otimes\BQ$,
where $\lambda_d$ denotes Lebesgue measure on $\R^d$.
For a (signed) measure $\rho$ on $\BX$ and a Borel set $B\subset\R^d$
we abbreviate $\rho_B:=\rho_{B\times \BY}$ and $\rho(B):=\rho(B\times\BY)$.
We consider a Gibbs process $\xi$ on $\BX$
with PI $\kappa$ satisfying the cocycle condition \eqref{ecocycle}
and (Dom2) for some $\alpha>0$.
Let $\sim$ be a measurable symmetric relation on $\BX$
and assume that (Loc1) holds.

We consider a measurable function $g\colon\BX\times\bN\to\{0,1\}$
satisfying \eqref{eher}. Let $R \subset \mathbb{R}^d$ be
a compact set and let $R(x):=R+x$ for all $x \in \mathbb{R}^d$. We assume that for
all $(x,r,\mu)\in\R^d\times\BY\times\bN(\BX)$,
\begin{align}
	g(x,r,\mu)&=g(x,r,\mu_{R(x)})\label{eygt1}.
\end{align}
Let $W\subset \mathbb{R}^d$ be a compact set. 
We wish to approximate the restriction of the point process
$\Gamma(\xi)$ to $W\times\BY$ by a Poisson process $\nu$
on  $W\times\BY$.

To the best of our knowledge, Poisson approximation of (derived) Gibbs
processes is so far only discussed in very few articles in the
literature. In \cite[Theorem 3.A]{Schuh09} Stein's method (through
\cite[Theorem 2.4]{BB92}) is used to obtain bounds on the total
variation distance between a finite thinned Gibbs process and a
Poisson process. 
However, as \cite[Theorem 4.I]{Schuh09} and the
examples given thereafter show, it is not easy to exploit these
bounds. (One needs to bound distances between densities
and empty space probabilities.)
In Theorem \ref{tpoapprox} and Corollary \ref{cpoapprox} below, we treat a
wider class of (scaled) thinned Gibbs processes and give very explicit
bounds on their total variation distance to a Poisson process. Our
proof exploits \cite[Theorem 3.1]{BSY21} and is based on a coupling of
the thinned Gibbs process and its Palm version. This technique is
applied in \cite{BSY21} and \cite{Otto20} to Poisson approximation of thinned Poisson
processes.

Let $\mu$ be a simple counting measure on $\BX$. We can extend $\bar\mu:=\mu(\cdot\times \BY)$ to a
graph $G(\mu)$ as follows. Let $x,y\in\bar\mu$ be distinct and let
$r,s\in\BY$ such that $(x,r),(y,s)\in\mu$. We draw an edge between
$x$ and $y$ if $(x,r)\sim (y,s)$. For Borel sets $A,B \subset \mathbb{R}^d$ we write $A \xleftrightarrow{\mu}B$ if
there exist $\bar\mu$-points $x\in A$ and $y\in B$ which
belong to the same component of $G(\mu)$.

Recall that the {\em Kantorovich-Rubinstein (KR) distance} between (the distributions) of two
finite point processes $\xi'$ and $\xi''$ on $\BX$ is defined by
\begin{align*}
	\kr(\xi',\xi''):=\sup_{h \in \text{Lip}(\mathbb{X})} |\BE h(\xi')-\BE h(\xi'')|,
\end{align*}
where $\text{Lip}(\mathbb{X})$ is the class of all measurable
1-Lipschitz functions with respect to the total variation metric
$\tv$   
between two (finite) measures on $\mathbb{X}$; see \eqref{e2.141}. Note
that convergence in the KR distance implies convergence in
distribution and that the KR distance dominates the 
total  variation distance
$\tv(\BP(\xi'\in\cdot),\BP(\xi''\in\cdot))$.

In the next theorem we will, in addition to the set $R$ (introduced at
\eqref{eygt1}), consider another compact set $S \subset \mathbb{R}^d$ 
that contains the origin $o \in \mathbb{R}^d$. We define $S(x):=S+x$,
$x \in \mathbb{R}^d$ 
and let  $W+S:=\{x+y:\,x \in W,\,y \in S\}$ be the (compact) Minkowski sum of $W$ and $S$. 
Note that $W \subset W+S$ since $o \in S$.

\begin{theorem}\label{tpoapprox} Let $\xi$ be a Gibbs process on $\R^d\times\BY$
  with a PI $\kappa$ satisfying (Loc1) and (Dom2) for some
  $\alpha>0$. Let $R \subset \mathbb{R}^d$, $S \subset \mathbb{R}^d$ with $o \in S$ 
and $W \subset \mathbb{R}^d$ be compact sets such that
  $R \subset S$. Define $\Gamma$ by
  \eqref{eGammaop}, where $g$ is assumed to satisfy \eqref{eher} and \eqref{eygt1}. 
Let
  $\nu$ be a Poisson process with finite intensity measure
  $\mathbb{E}[\nu]$. Then
\begin{align}\label{epoa}
\kr(\Gamma(\xi)_W,\nu)
\le 
\|\BE[\Gamma(\xi)_W]-\BE[\nu]\|
+ T_1+T_2+T_3,
\end{align}
where
\begin{align*}
	T_1:= &2 \iint \I\{x,y\in W\} \BE [g(x,r,\xi) \kappa(x,r,\xi)] \,\BE [g(y,s,\xi) \kappa(y,s,\xi)]\\
	&\quad \quad \times \I \{S(x) \cap S(y) \neq \emptyset\}\,\BQ^2(\mathrm{d}(r,s))\,\mathrm{d}(x,y),\\
	T_2:= &2 \iint \I\{x,y\in W\} \BE [g(x,r,\xi+\delta_{(y,s)}) \kappa(x,r,\xi+\delta_{(y,s)})g(y,s,\xi+\delta_{(x,r)}) \kappa(y,s,\xi)]\\
	&\quad \quad \times \I \{S(x) \cap S(y) \neq \emptyset\} \,\BQ^2(\mathrm{d}(r,s))\,\mathrm{d}(x,y),\\
	T_3:=&2\alpha \iint \I\{x,y\in W\}\I \{S(x) \cap S(y) = \emptyset\}
	\BP(R(y)\overset{\eta}{\longleftrightarrow}(W+S)^c \cup R(x))\\
	&\quad \quad \times \BE [g(x,r,\xi) \kappa(x,r,\xi)]\, \mathbb{Q}(\mathrm{d}r)\,	\mathrm{d}(x,y),
\end{align*}
where $\eta$ is a Poisson process on $\mathbb{R}^d \times \mathbb{Y}$ with intensity
measure $\alpha \lambda_d\otimes\BQ$.
\end{theorem}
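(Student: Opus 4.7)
The plan is to apply the Poisson approximation bound of \cite[Theorem 3.1]{BSY21} to the finite marked point process $\chi:=\Gamma(\xi)_W$ on $W\times\BY$, using the neighborhood family $\{S(x):x\in W\}$. That theorem bounds $\kr(\chi,\nu)$ by $\|\BE[\chi]-\BE[\nu]\|$ plus three correction terms of the usual Stein-type shape: a first-moment square over neighborhoods (which will become $T_1$), a second factorial-moment over neighborhoods (which will become $T_2$), and a long-range term comparing $\chi$ outside $S(x)$ to its reduced Palm version at $(x,r)$, integrated against $\BE[\chi]$ (which will become $T_3$).

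To identify the local terms, I would first use the GNZ equation \eqref{eGNZ} to rewrite
\[
\BE[\chi](\mathrm{d}(x,r))=\I\{x\in W\}\,\BE[g(x,r,\xi)\kappa(x,r,\xi)]\,\mathrm{d}x\,\BQ(\mathrm{d}r),
\]
which immediately produces $T_1$ from the first-moment square on pairs with $S(x)\cap S(y)\ne\emptyset$. For the factorial-moment piece I would apply the two-point GNZ formula \eqref{eGNZmulti} to
$\BE\bigl[\chi^{(2)}(\mathrm{d}((x,r),(y,s)))\bigr]$, using the cocycle identity \eqref{ecocycle} to rewrite $\kappa_2((x,r),(y,s),\xi)=\kappa(y,s,\xi)\kappa(x,r,\xi+\delta_{(y,s)})$ and the hereditary property \eqref{eher} to bound $g(y,s,\xi+\delta_{(x,r)})\le g(y,s,\xi)$; together these yield exactly the integrand of $T_2$.

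The heart of the argument is $T_3$. By Lemma \ref{lpalm}, the reduced Palm distribution of $\Gamma(\xi)$ at $(x,r)$ equals the law of $\Gamma(\xi^{(x,r)})$, where $\xi^{(x,r)}$ is Gibbs with PI $\kappa^{(x,r)}(z,t,\mu)=\kappa(z,t,\mu+\delta_{(x,r)})\,g(x,r,\mu+\delta_{(z,t)})/g(x,r,\mu)$. By (Loc1), $\kappa^{(x,r)}(z,t,\mu)=\kappa(z,t,\mu)$ whenever $(z,t)$ is not in the $\sim$-cluster of $(x,r)$ in $\mu$. I would then construct, via the disagreement coupling of Theorem \ref{tdis} applied on the localization window $W+S$ with the common dominating Poisson process $\eta$ of intensity $\alpha\lambda_d\otimes\BQ$, a coupling of (a local version of) $\xi$ and $\xi^{(x,r)}$ in which every disagreement point lies in a $\sim$-cluster of $\eta$ connected to $R(x)\cup(W+S)^c$. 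The locality hypothesis \eqref{eygt1} then implies that a disagreement of $\chi$ at a point $(y,s)$ with $S(x)\cap S(y)=\emptyset$ forces $R(y)$ to lie in such a cluster, which is the event measured by $\BP(R(y)\xleftrightarrow{\eta}(W+S)^c\cup R(x))$. Integrating this probability against $\BE[\chi]$, and using (Dom2) once more to bound the Palm-factor intensity at $(y,s)$ by $\alpha$, yields the bound $T_3$.

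The main obstacle, and where the arguments of Section \ref{secdisagreement} really pay off, is to make the coupling of $\xi$ and $\xi^{(x,r)}$ rigorous in the presence of the \emph{random} boundary condition $\xi_{(W+S)^c}$: one must first use the DLR equation \eqref{edlr} to condition on this boundary and reduce both the Gibbs and the Palm-Gibbs process to finite windows, then run the disagreement coupling of Theorem \ref{tdis} with those (almost surely regular) boundary conditions and with the extra point $(x,r)$ added on one side, and finally verify that the event that $\chi|_{S(x)^c}$ disagrees on the two sides is contained in the connectivity event above. Condition $R\subset S$ is what ensures that the thinning locality radius is absorbed into the larger Stein neighborhood, so that connectivity is tested via $R(y)$ rather than $S(y)$.
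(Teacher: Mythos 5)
Your proposal follows essentially the same route as the paper: apply \cite[Theorem 3.1]{BSY21} to get the Palm-coupling bound, split the resulting integral according to $S(x)\cap S(y)\ne\emptyset$ versus $=\emptyset$, use GNZ and two-point GNZ with the cocycle identity and Lemma \ref{lpalm} for $T_1$ and $T_2$, and for $T_3$ condition via DLR and run the disagreement coupling of Theorem \ref{tdis} with the extra Palm point as part of the boundary and a common dominating Poisson process $\eta$. Two small imprecisions worth noting: BSY21's Theorem 3.1 yields a single Palm-coupling term rather than the three-fold split (that decomposition is done within the present proof), the hereditary bound $g(y,s,\xi+\delta_{(x,r)})\le g(y,s,\xi)$ is not needed for $T_2$ (which follows as an identity from the factorial-moment computation), and the disagreement coupling is run on the window $(W+S)\setminus R(x)$ so that $R(x)$, hence the Palm point, sits in the boundary region; your last paragraph indicates you already sense this.
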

\noindent

\begin{remark} \rm One should compare Theorem \ref{tpoapprox} with
\cite[Theorem 4.1]{BSY21} that considers Poisson process
approximation of functionals of a Poisson process. The terms $T_1$
and $T_2$ on the right-hand side of \eqref{epoa} are analogous to
the terms $E_2$ and $E_3$ in \cite[Theorem 4.1]{BSY21}. 
The term  $T_3$, which reflects the 
interactions of the Gibbs process
$\xi$, does not appear in \cite[Theorem 4.1]{BSY21}. This is due to
the independence properties of the Poisson process.
\end{remark}

\begin{remark}\label{remf} \rm Using the contractivity properties of the 
KR-distance, Theorem \ref{tpoapprox} can be generalized as follows.
Let $\BX'$ be another complete separable metric space
and let $f\colon\mathbb{X} \to\mathbb{X}'$ be measurable. Define
(for a given compact set $W\subset\R^d$), the
mapping $\Gamma_{f,W}\colon \bN(\BX)\to\bN(\BX')$ by
\begin{align}\label{eGammaop2}
\Gamma_{f,W}(\mu)(\cdot):=\int\I\{f(x) \in \cdot,x\in W\}g(x,\mu-\delta_x)\,\mu(\mathrm{d}x),\quad\mu\in\bN(\BX).
\end{align}
Let $\nu$ be a Poisson process as in Theorem \ref{tpoapprox}
and let $\nu_f$ be the (finite) Poisson process on $\BX'$ given by
$\nu_f:=\nu(f^{-1}(\cdot))$. Then $\kr(\Gamma_{f,W}(\xi),\nu_f)$
can still be bounded by the right-hand side of \eqref{epoa}.
\end{remark}

\begin{proof}[Proof of Theorem \ref{tpoapprox}] Assume for each $(x,r)\in\BX$ that
$\chi^{x,r}$ is a point process with the Palm distribution
$\BP^{\Gamma(\xi)|\Gamma(\xi)}_{(x,r)}$
and such that $(\omega,x,r)\mapsto \chi^{x,r}(\omega)$ is
measurable. (Later we shall choose $\chi^{x,r}$ in a specific way.)
From \cite[Theorem 3.1]{BSY21} we have that
\begin{align}\label{eBB}\notag
\kr&(\Gamma(\xi)_W,\nu)\\
&\le 
\|\BE[\Gamma(\xi)_W]-\BE[\nu]\|
+2\int \BE\|\Gamma(\xi)_W-(\chi^{x,r}_W-\delta_{(x,r)})\|\,
\BE[\Gamma(\xi)_W](\mathrm{d}(x,r)).
\end{align}
We now turn to the integral on
the right-hand side of \eqref{eBB}. It equals
\begin{align*}
I:=\int \BE\|\Gamma(\xi)_W-(\chi^{x,r}_W-\delta_{(x,r)})\|\,
\BE[\Gamma(\xi)_W](\mathrm{d}(x,r)).
\end{align*}
Obviously,
\begin{align}\label{e8.8}
&I
\le \iint  \I \{S(x) \cap S(y) \neq \emptyset\} \BE[\Gamma(\xi)](\mathrm{d}(y,s))\,
\BE[\Gamma(\xi)](\mathrm{d}(x,r))\\ \notag
&\quad +\iint  \I \{S(x) \cap S(y) \neq \emptyset\} \BE[\chi^{x,r}-\delta_{(x,r)}](\mathrm{d}(y,s))\,
\BE[\Gamma(\xi)](\mathrm{d}(x,r))\\ \notag
&\quad +\BE\iint  \I \{S(x) \cap S(y) = \emptyset\} |\Gamma(\xi)_W-(\chi^{x,r}_W-\delta_{(x,r)})|(\mathrm{d}(y,s))\,
\BE[\Gamma(\xi)](\mathrm{d}(x,r)), \notag
\end{align}
where here and in the following the integration with respect to $x$ and $y$ is always restricted to $W$.  
Using the GNZ equations, we find that the first term on the above right-hand side is given by
\begin{align*}
 \iint  \BE [g(x,r,\xi) \kappa(x,r,\xi)]\, \BE [g(y,s,\xi) \kappa(y,s,\xi)]
\I \{S(x) \cap S(y) \neq \emptyset\}\,\BQ^2(\mathrm{d}(r,s))\,\mathrm{d}(x,y).
\end{align*}

Since $\chi^{x,r}$ is a point process with the Palm distribution
  $\BP^{\Gamma(\xi)|\Gamma(\xi)}_{(x,r)}$, we obtain from
  \eqref{ePalm} and \eqref{eGNZmulti} that the second term on the
right-hand side of \eqref{e8.8} is given by
\begin{align*}
	& \iint  \BE [g(x,r,\xi+\delta_{(y,s)}) \kappa(x,r,\xi+\delta_{(y,s)})g(y,s,\xi+\delta_{(x,r)}) 
\kappa(y,s,\xi)]\\
	&\quad \quad \times \I \{S(x) \cap S(y) \neq \emptyset\} \,\BQ^2(\mathrm{d}(r,s))\,\mathrm{d}(x,y).
\end{align*}

To treat the third term on the right-hand side of \eqref{e8.8}
we use disagreement coupling. 
Assume for each $(x,r)\in\BX$ that
$\xi^{x,r}$ is a point process whose distribution is the Palm distribution
$\BP^{\xi|\Gamma(\xi)}_{(x,r)}$
and such that $(\omega,x,r)\mapsto \xi^{x,r}(\omega)$ is
measurable. It is a straightforward task to check that
\begin{align*}
\chi^{x,r}\overset{d}{=}\Gamma(\xi^{x,r}),\quad (x,r)\in\BX.
\end{align*}
Let $U(x):=(W+S) \setminus R(x)$. Note that since $R \subset S$ and $o \in S$, 
every pair of points $x,y \in W$ with $S(x) \cap S(y)=\emptyset$ satisfies $y \in U(x)$.
By the DLR equations \eqref{edlr}, Lemma \ref{lpalm} and 
\eqref{eygt1} 
we have that
\begin{align}
	\BP(\xi^{x,r}_{U(x)}\in\cdot \mid \xi^{x,r}_{U(x)^c})
\end{align}
is the distribution of a Gibbs process with PI
$\kappa_{U(x)\times \mathbb{Y},\xi^{x,r}_{U(x)^c}+\delta_{(x,r)}}$,
while
\begin{align}
	\BP(\xi_{U(x)}\in\cdot \mid \xi_{U(x)^c})
\end{align}
is the distribution of a Gibbs process with PI
$\kappa_{U(x)\times \mathbb{Y},\xi_{U(x)^c}}$. By Lemma \ref{lthreecopling}
there exist point processes $\eta$, $\tilde{\xi}$ and $\tilde{\xi}^{x,r}$ 
such that $\eta$ is a Poisson process on $\BX$ with intensity measure
$\alpha \lambda_d\otimes\BQ$, $\tilde\xi\overset{d}{=}\xi$,
$\tilde\xi^{x,r}\overset{d}{=}\xi^{x,r}$ 
and $\tilde{\xi} \le \eta$,
$\tilde{\xi}^{x,r}\le \eta$ almost surely.
Let $\Phi$ be an independent marking of $\eta$
with the marks uniformly distributed on the interval $[0,\alpha]$.
Then $\Phi$ is a Poisson process on $\BX\times[0,\alpha]$, whose intensity
measure is the product of $\lambda$ and Lebesgue measure on $[0,\alpha]$.
We now consider the disagreement coupling
\begin{align*}
	\psi:=T_{U(x)\times\BY,\tilde\xi_{U(x)^c}}(\Phi_{U(x)\times\BY\times[0,\alpha]}),\quad
	\psi^{x,r}:=T_{U(x)\times\BY,\tilde\xi^{x,r}_{U(x)^c}+\delta_{(x,r)}}(\Phi_{U(x)\times\BY\times[0,\alpha]}),
\end{align*}
where we refer to the notation introduced in the first
paragraph of Section \ref{secdisagreement}.
From Theorem \ref{tembedding}, Remark \ref{r22} and the DLR equations we
find that
\begin{align}
	\xi \overset{d}{=}\psi+\tilde{\xi}_{U(x)^c},\quad 
	\xi^{x,r}\overset{d}{=} \psi^{x,r}+\tilde\xi^{x,r}_{U(x)^c}.
\end{align}
Using this coupling we can rewrite the third term 
on the right-hand side of \eqref{e8.8}, $I_3$ say, as
\begin{align*}
	I_3=\BE\iint  \I \{S(x) \cap S(y) = \emptyset\} |\Gamma(\psi+\tilde{\xi}_{U(x)^c})-
	\Gamma(\psi^{x,r}+\tilde\xi^{x,r}_{U(x)^c})|(\mathrm{d}(y,s))\BE[\Gamma(\xi)](\mathrm{d}(x,r)).
\end{align*}
Since a point $y\in  W$ with $S(x) \cap S(y)=\emptyset$ 
satisfies $y \in U(x)$, it can only contribute to the total variation
$\|\Gamma(\psi+\tilde{\xi}_{U(x)^c})-
\Gamma(\psi^{x,r}+\tilde\xi^{x,r}_{U(x)^c})\|$
if $(y,s)\in \psi+\psi^{x,r}$ and if one of the following three
cases occurs. In the first case we have that $(y,s)\notin \psi^{x,r}$ and 
$g(y,s,\psi+\tilde{\xi}_{U(x)^c})=1$. By \eqref{eygt1} and the definition of $U(x)$
we have that  $g(y,s,\psi+\tilde{\xi}_{U(x)^c})=g(y,s,\psi)$.
The second case is $(y,s)\notin \psi$ and 
$g(y,s,\psi^{x,r})=1$. The third case is $(y,s)\notin|\psi-\psi^{x,r}|$
and $|g(y,s,\psi)-g(y,s,\psi^{x,r})|=1$.
By \eqref{eygt1}, the third case can only occur if
$|\psi-\psi^{x,r}|(R(y)\times \mathbb{Y})\ne 0$. The latter inequality holds
in the other two cases as well. Therefore,
\begin{align}\label{e3100}\notag
&\int  \I \{S(x) \cap S(y) = \emptyset\} |\Gamma(\psi+\tilde{\xi}_{U(x)^c})-
\Gamma(\psi^{x,r}+\tilde\xi^{x,r}_{U(x)^c})|(\mathrm{d}(y,s))\\
	&\le \int \I \{S(x) \cap S(y) = \emptyset\} \I\{|\psi-\psi^{x,r}|(R(y)\times \mathbb{Y})\ne 0\}\,
	(\psi+\psi^{x,r})(\mathrm{d}(y,s)).
\end{align}
By Theorem \ref{tdis}, $\psi+\psi^{x,r}$ is dominated by the
restriction of $\eta$ to $U(x)\times\BY$. Moreover, if
$|\psi-\psi^{x,r}|(R(y) \times \mathbb{Y})\ne 0$, then there exists
$(z,u)\in \eta$ with $z\in R(y)$ such that $(z,u)$ is
connected via $\eta^!_{(z,u)}:=\eta-\delta_{(z,u)}$ to
$\tilde\xi^{x,r}_{U(x)^c}+\tilde\xi_{U(x)^c}$. Since
$\tilde{\xi} \le \eta$, $\tilde{\xi}^{x,r}\le \eta$
we have that
$\text{supp}(\tilde\xi^{x,r}+\tilde\xi) \subseteq
\text{supp}(\eta)$ almost surely. This implies that $(z,u)$
is connected via
$\eta^!_{(z,u)}$ to
$\eta_{U(x)^c}$ and, 
hence, that $R(y)\xleftrightarrow{\eta^!_{(y,s)}}U(x)^c$.  Therefore we can bound the right-hand
side of \eqref{e3100} by
\begin{align*}
\int  \I \{S(x) \cap S(y) = \emptyset\}
\I\{R(y)\xleftrightarrow{\eta^!_{(y,s)}}U(x)^c\}\,
	\eta(\mathrm{d}(y,s)).
\end{align*}
Taking the expectation and using the Mecke formula yields
\begin{align*}
	&\BE \int  \I \{S(x) \cap S(y) = \emptyset\} |\Gamma(\psi+\tilde{\xi}_{U(x)^c})-
	\Gamma(\psi^{x,r}+\tilde\xi^{x,r}_{U(x)^c})|(\mathrm{d}(y,s))\\
	&\le \alpha \iint \I \{S(x) \cap S(y) = \emptyset\}
	\BP(R(y)\overset{\eta}{\longleftrightarrow}U(x)^c)\,\BQ(\mathrm{d}s)\,\mathrm{d}y.
\end{align*}
Therefore 
\begin{align*}
	I_3&\le \alpha \iint  \I \{S(x) \cap S(y) = \emptyset\}
	\BP(R(y)\overset{\eta}{\longleftrightarrow}(W+S)^c \cup R(x))\\
	&\quad \times  \BE [g(x,r,\xi) \kappa(x,r,\xi)]\, \mathbb{Q}(\mathrm{d}r)\,	\mathrm{d}(x,y).
\end{align*}
This finishes the proof.
\end{proof}

\bigskip

Next we provide a lemma which helps us control the
term $T_3$ on the right-hand side of \eqref{epoa} in the case
where $R(x)$ and $S(x)$ are balls; see Corollary \ref{cpoapprox}. 
The  lemma bounds  the one-arm
probabilities, a well-studied object in the theory of continuum
percolation (see e.g. \cite{MeesterRoy}). It is convenient to
introduce a random variable $Y$ with distribution $\BQ$ which is
independent of $\eta$. Let $o$ be the origin in $\mathbb{R}^d$. Then $C((o,Y),\eta+\delta_{(o,Y)})$ can be
interpreted as the cluster containing the {\em typical point} of
$\eta$, at least if the relation $\sim$ is {\em translation
  invariant}.  By this we mean that for any given $(x,r),(y,s)\in\BX$
we have that $(x,r)\sim (y,s)$ implies $(x+z,r)\sim (y+z,s)$ for each
$z\in\R^d$.

\begin{lemma}\label{lytpical} Let $\eta$ be a Poisson process on $\BX$ with intensity measure
$\alpha \lambda_d\otimes\BQ$. Assume that $\sim$ is translation invariant and let $u,v>0$. 
Then
\begin{align}
		\BP(B(o,u) \xleftrightarrow{\eta}B(o,u+v)^c)
		&\le \alpha \lambda_d(B(o,1))u^d\,	
\BP((o,Y) \xleftrightarrow{\eta+\delta_{(o,Y)}} B(o,v)^c).
\end{align}
\end{lemma}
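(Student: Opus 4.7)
The plan is to upper-bound the one-arm event by summing, over every point of $\eta$ whose spatial coordinate lies in $B(o,u)$, the indicator that the cluster of that point reaches outside a ball of radius $v$ around it, and then apply the Mecke formula together with translation invariance of $\sim$.

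The first step is a geometric observation: if $B(o,u)\xleftrightarrow{\eta}B(o,u+v)^c$, then by definition there exist distinct points $(x,r),(y,s)\in\eta$ in the same component of $G(\eta)$ with $x\in B(o,u)$ and $y\in B(o,u+v)^c$. The triangle inequality gives $\|y-x\|\ge \|y\|-\|x\|>(u+v)-u=v$, so $y\notin B(x,v)$. Consequently $\{x\}\xleftrightarrow{\eta}B(x,v)^c$, which yields the pointwise bound
\begin{align*}
\I\{B(o,u)\xleftrightarrow{\eta}B(o,u+v)^c\}
\le \int \I\{x\in B(o,u)\}\I\{\{x\}\xleftrightarrow{\eta}B(x,v)^c\}\,\eta(\mathrm{d}(x,r)).
\end{align*}

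Next I would take expectations and apply the Mecke equation to the Poisson process $\eta$ (whose intensity measure is $\alpha\,\lambda_d\otimes\BQ$). This turns the integral with respect to $\eta$ into an integral with respect to $\alpha\,\lambda_d\otimes\BQ$, at the cost of replacing $\eta$ by $\eta+\delta_{(x,r)}$ inside the connectivity event:
\begin{align*}
\BP(B(o,u)\xleftrightarrow{\eta}B(o,u+v)^c)
\le \alpha\iint \I\{x\in B(o,u)\}\,
\BP\bigl(\{x\}\xleftrightarrow{\eta+\delta_{(x,r)}}B(x,v)^c\bigr)\,\lambda_d(\mathrm{d}x)\,\BQ(\mathrm{d}r).
\end{align*}

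The final step uses the translation invariance hypothesis. Since $\sim$ is translation invariant and the distribution of $\eta$ is invariant under spatial translations on $\R^d$, the event $\{x\}\xleftrightarrow{\eta+\delta_{(x,r)}}B(x,v)^c$ has the same probability as $\{o\}\xleftrightarrow{\eta+\delta_{(o,r)}}B(o,v)^c$. Pulling this probability out of the $x$-integration leaves $\lambda_d(B(o,u))=\lambda_d(B(o,1))u^d$ as the remaining factor, and integrating the resulting probability against $\BQ(\mathrm{d}r)$ reassembles it as $\BP((o,Y)\xleftrightarrow{\eta+\delta_{(o,Y)}}B(o,v)^c)$ with $Y\sim\BQ$ independent of $\eta$. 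This yields the claimed inequality. The only delicate point is making the translation-invariance identification of probabilities rigorous; everything else is a direct application of Mecke and a triangle inequality.
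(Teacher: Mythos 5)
Your proposal is correct and follows essentially the same route as the paper: bound the one-arm indicator by a sum over $\eta$-points whose spatial coordinate lies in $B(o,u)$ and whose cluster reaches far away, apply the Mecke equation, and then invoke translation invariance of both $\sim$ and $\Pi_{\alpha\lambda_d\otimes\BQ}$ to reduce to the typical-point probability. The only cosmetic difference is that you apply the triangle inequality ($\|y-x\|>v$) before Mecke to replace $B(o,u+v)^c$ by $B(x,v)^c$, whereas the paper shifts by $-x$ after Mecke and uses the inclusion $B(o,v)\subset B(-x,u+v)$; these are equivalent.
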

\begin{proof} We have
\begin{align*} 
	&\I\{B(o,u) \xleftrightarrow{\eta}B(o,u+v)^c)\}\\
	&\quad \le \int \I\{x\in B(o,u), C(x,r,\eta)(B(o,u+v)^c\times\BY)>0\}\,\eta(\mathrm{d}(x,r)). 
\end{align*}
Taking expectations and using the Mecke equation yields
\begin{align*} 
	p:=\BP&(B(o,u) \xleftrightarrow{\eta}B(o,u+v)^c)\\
	&\le \alpha \iint \I\{x\in B(o,u)\} \BP(C(x,r,\eta+\delta_{(x,r)})(B(o,u+v)^c\times \BY)>0)
	\,\mathrm{d}x\,\BQ(\mathrm{d}r). 
\end{align*}
By translation invariance of $\sim$,
\begin{align*}
	C(x,r,\eta+\delta_{(x,r)})(B(o,u+v)^c\times \BY)
	=C(o,r,\theta_x\eta+\delta_{(o,r)})(B(-x,u+v)^c\times \BY),
\end{align*}
where $\theta_x\eta$ is the point process on $\BX$ defined by
$\theta_x\eta(B\times C):=\eta((B+x)\times C)$ for measurable
sets $B\subset\R^d$ and $C\subset\BY$. Since
$\theta_x\eta\overset{d}{=}\eta$ we obtain that
\begin{align*}
	p\le \alpha \int \I\{x\in B(o,u)\} \BP(C(o,Y,\eta+\delta_{(o,Y)})(B(-x,u+v)^c\times \BY)>0)
	\,\mathrm{d}x.
\end{align*}
If $x\in  B(o,u)$ then $B(o,v)\subset B(-x,u+v)$. 
This implies the asserted inequality.
\end{proof}

\begin{example}\label{exBoolean} \rm Let $R\in(0,\infty)$ and
	assume that $\BY=[0,R]\times\BY'$ for some 
	complete separable metric space $\BY'$. For
	$(x,r,w)$ we interpret $x$ as the center of a ball with radius $r$
	and mark $w$.  Define the translation invariant
	relation $\sim$ on $\R^d\times \BY$ by
	$(x,r,w)\sim (y,s,z)$ if $\|x-y\|\le r+s$ and $w\sim' z$, where
	$\sim'$ is a measurable symmetric  relation on $\BY'$; 
	see also \eqref{esim}.  Then $\bar\eta:=\eta(\cdot\times\BY')$ is a
	Poisson process on $\R^d\times \R_+$ with intensity measure
	$\alpha\lambda_d\otimes \bar\BQ$, where
	$\bar\BQ:=\BQ(\cdot\times\BY')$.  The associated {\em Boolean model}
	is given as
	\begin{align}
		Z:=\bigcup_{(x,r)\in\bar\eta}B(x,r).
	\end{align}
	Let us define the {\em critical intensity}
	\begin{align}\label{alcrit}
		\alpha_c:=
		\sup\{\beta>0:\int \Pi_{\beta\lambda_d\otimes \bar\BQ}(\{\mu\in\bN(\R^d\times\R_+):|C(o,r,\mu)|=\infty\})
		\,\bar \BQ (\mathrm{d}r)=0\},
	\end{align}
	where $\Pi_{\beta\lambda_d\otimes \bar\BQ}$ for $\beta>0$ is the distribution of a
	Poisson process with intensity measure $\beta\lambda_d\otimes \bar\BQ$
	and the cluster $C(o,r,\mu)$ is defined with respect to the relation \eqref{esim}.  
	This example can be generalized by replacing $[0,R]$ by the space 
	of all particles contained in some fixed ball; see Example \ref{expartic}. 
\end{example}

Assume that
\begin{align}\label{csharp}
	\BP((o,Y) \xleftrightarrow{\eta+\delta_{(o,Y)}} B(o,v)^c) \le c_1 e^{-c_2v},\quad v>0,
\end{align}
for constants $c_1,c_2>0$ (depending only on the dimension and on
$\alpha$). This implies that the graph $G(\eta)$ (introduced at the
beginning of this section) does not {\em percolate}, that is, it does
not have an unbounded connected component. (We refer to
\cite{MeesterRoy} for an extensive discussion of some standard models
of {\em continuum percolation}.) 

\begin{remark}\label{rperc}\rm
Consider the setting of
Example \ref{exBoolean} and assume that $\alpha<\alpha_c$.
Then we obtain from \cite[(3.7)]{Zie17}
 that
\eqref{csharp}
holds.
\end{remark}

\begin{corollary}\label{cpoapprox} Let $\xi$ be a Gibbs process on $\R^d\times\BY$
	with a PI $\kappa$ satisfying (Loc1) and
	(Dom2) for some $\alpha>0$ and assume that \eqref{csharp} holds. Define 
	$\Gamma$ by \eqref{eGammaop}, where $g$ is assumed to satisfy \eqref{eher}. 
Moreover, assume that there is some $u>0$ such that
	\begin{align}
		g(x,r,\mu)&=g(x,r,\mu_{B(x,u)}),
	\end{align}
        for all $(x,r,\mu)\in\R^d\times\BY\times\bN(\BX)$. Let
        $W\subset \R^d$ be a compact set and let $\nu$ be a
        Poisson process with finite intensity measure
        $\mathbb{E}[\nu]$. Then we have for all $v>0$
	\begin{align}\label{cpoa}\notag
   \kr(\Gamma(\xi)_W,\nu)
       \le 
\|&\BE[\Gamma(\xi)_W]-\BE[\nu]\| 
          + F_1(u,v)+F_2(u,v)\\
&+\I\{\mathrm{diam}(W)> 2(u+v) \}c_1 \BE[\Gamma(\xi) (W)] \lambda_d(W) u^d e^{-c_2v},
	\end{align}
	where
	\begin{align*}
		F_1(u,v):= &2\iint \I\{x,y\in W\} \BE [g(x,r,\xi) \kappa(x,r,\xi)]\, 
\BE [g(y,s,\xi) \kappa(y,s,\xi)]\,\\
		&\quad \quad \times\I \{\|x-y\| \le 2(u+v) \}\, \BQ^2(\mathrm{d}(r,s))\,\mathrm{d}(x,y),\\
		F_2(u,v):= &2\iint \I\{x,y\in W\} \BE [g(x,r,\xi+\delta_{(y,s)}) 
\kappa(x,r,\xi+\delta_{(y,s)})g(y,s,\xi+\delta_{(x,r)}) \kappa(y,s,\xi)]\\
		&\quad \quad \times\I\{\|x-y\| \le 2(u+v) \} \, \,\BQ^2(\mathrm{d}(r,s))\,\mathrm{d}(x,y).
	\end{align*}
The constants $c_1,c_2>0$ depend only on the dimension and on $\alpha$.
\end{corollary}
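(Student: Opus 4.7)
The plan is to specialize Theorem \ref{tpoapprox} to the choices $R := B(o,u)$ and $S := B(o,u+v)$, which satisfy $R\subset S$ and $o\in S$; the locality hypothesis imposed on $g$ in the corollary is exactly \eqref{eygt1} for this $R$. The KR bound \eqref{epoa} then reduces to $\|\BE[\Gamma(\xi)_W]-\BE[\nu]\|+T_1+T_2+T_3$, and what remains is to identify $T_1,T_2$ with $F_1(u,v),F_2(u,v)$ and to control $T_3$ by the exponentially small term.

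Since $S(x)=B(x,u+v)$, the condition $S(x)\cap S(y)\neq\emptyset$ is equivalent to $\|x-y\|\le 2(u+v)$; substituting this into the definitions of $T_1$ and $T_2$ produces $F_1(u,v)$ and $F_2(u,v)$ verbatim. For $T_3$ the complementary condition $\|x-y\|>2(u+v)$ is vacuous whenever $\mathrm{diam}(W)\le 2(u+v)$, which accounts for the indicator $\I\{\mathrm{diam}(W)>2(u+v)\}$ in \eqref{cpoa}.

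The geometric key is the inclusion
\begin{equation*}
(W+S)^c \cup R(x) \subset B(y,u+v)^c, \qquad x,y\in W,\ \|x-y\|>2(u+v).
\end{equation*}
Indeed, if $z\in R(x)=B(x,u)$ then $\|z-y\|\ge\|x-y\|-u>u+2v>u+v$; and if $z\in (W+S)^c$ then $\|z-w\|>u+v$ for every $w\in W$, in particular for $w=y$. Consequently, on the event $\{R(y)\xleftrightarrow{\eta}(W+S)^c\cup R(x)\}$ the cluster of $G(\eta)$ meeting $R(y)=B(y,u)$ must also meet $B(y,u+v)^c$, so by translation invariance of $\eta$ and of $\sim$,
\begin{equation*}
\BP\bigl(R(y)\xleftrightarrow{\eta}(W+S)^c\cup R(x)\bigr) \le \BP\bigl(B(o,u)\xleftrightarrow{\eta}B(o,u+v)^c\bigr).
\end{equation*}
Combining Lemma \ref{lytpical} with the decay hypothesis \eqref{csharp} bounds the right-hand side by $\alpha\lambda_d(B(o,1))\,c_1 u^d e^{-c_2 v}$.

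Finally, I would insert this uniform bound into the integrand of $T_3$, apply the GNZ equation in the form
\begin{equation*}
\int_W\int_{\BY}\BE[g(x,r,\xi)\kappa(x,r,\xi)]\,\BQ(\mathrm{d}r)\,\mathrm{d}x = \BE[\Gamma(\xi)(W)],
\end{equation*}
and use $\int_W\mathrm{d}y=\lambda_d(W)$ to obtain the $T_3$ bound in the stated form, after absorbing the multiplicative factor $2\alpha^2\lambda_d(B(o,1))$ into a redefined constant that still depends only on $\alpha$ and $d$. The only conceptual step is the geometric containment above; the rest is straightforward bookkeeping of constants.
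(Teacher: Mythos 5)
Your proposal is correct and follows essentially the same route as the paper: specialize Theorem \ref{tpoapprox} with $R=B(o,u)$, $S=B(o,u+v)$, reduce $T_1,T_2$ to $F_1,F_2$ by the equivalence $S(x)\cap S(y)\neq\emptyset \Leftrightarrow \|x-y\|\le 2(u+v)$, and bound $T_3$ via the one-arm estimate from Lemma \ref{lytpical} together with \eqref{csharp}. Where the paper argues the connectivity reduction by splitting into the two cases $R(y)\leftrightarrow R(x)$ and $R(y)\leftrightarrow (W+S)^c$, you instead prove the clean set inclusion $(W+S)^c\cup R(x)\subset B(y,u+v)^c$, which is the same geometric fact stated slightly more directly.
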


\begin{proof}
We apply Theorem \ref{tpoapprox} with $R:=B(o,u)$ and $S:=B(o,u+v)$ for some $u,v>0$. 
 We wish to bound $T_3$. Let $x,y\in W$ such
	that $S(x)\cap S(y)=\emptyset$, i.e.\ $\|x-y\|>2(u+v)$.
	If $B(y,u)\overset{\eta}{\longleftrightarrow}B(x,u)$
	then $B(y,u)\overset{\eta}{\longleftrightarrow}B(y,u+v)^c$.
	And if $B(y,u)\overset{\eta}{\longleftrightarrow}(W+S)^c$
	we can  use $y\in W$ to conclude that
	$B(y,u)\overset{\eta}{\longleftrightarrow}B(y,u+v)^c$. Hence we obtain from stationarity 
	\begin{align*}
		\BP(R(y)\overset{\eta}{\longleftrightarrow}(W+S)^c \cup R(x))
		\le \BP(B(o,u)\overset{\eta}{\longleftrightarrow}B(o,u+v)).
	\end{align*}
Moreover, since $\|x-y\|>2(u+v)$ and $x,y\in W$ we have $\mathrm{diam}(W)>2(u+v)$. By assumption \eqref{csharp} we obtain 
from Lemma \ref{lytpical} 
that $T_3$ is bounded by 
\begin{align*}
	&\I\{\mathrm{diam}(W)>2(u+v)\}c_1 u^d e^{-c_2v} \iint \I\{x,y\in W\} \BE [g(x,r,\xi) \kappa(x,r,\xi)] 
	\mathbb{Q}(\mathrm{d}r)\mathrm{d}(x,y)\\
	&\quad=\I\{\mathrm{diam}(W)>2(u+v)\}c_1  \BE[\Gamma(\xi) (W)] \lambda_d(W) u^d e^{-c_2v}  
\end{align*}
for some constants $c_1,c_2>0$. This proves the corollary.
\end{proof}

\begin{remark}\label{r9.8} \rm Assume that the probability measure
    $\mathbb{Q}$ in the Strauss process, the continuum random cluster
    model or the Widom-Rowlinson model is supported on a bounded
    subset of $[0,\infty)$. Then we can apply Remark \ref{rperc} to
    see that Corollary \ref{cpoapprox} holds for $\alpha<\alpha_c$,
    where $\alpha_c$ is the percolation threshold of a spherical
    Boolean model with radius distribution $\mathbb{Q}$.
Corollary \ref{cpoapprox} also holds for the area interaction process if additionally
    $\mathbb{Q}([r_1,\infty))=1$ for some $r_1>0$ (so that (Dom2)
    holds).

    For a Gibbs process with a pair potential, assume that $\BX=\R^d$
    and that $U(x,y)=U^*(\|x-y\|)$ for some measurable
    $U^*\colon \R_+\to [0,\infty]$.  Then (Dom2) holds and
in order to apply Theorem \ref{tpoapprox} we assume that
there exists $r_0\ge 0$ such that $U^*(r)=0$ for all
    $r\ge r_0$.  We can then apply Remark  \ref{rperc} 
    with $\BQ=\delta_{r_0/2}$ to see that Corollary \ref{cpoapprox}
    holds for $\alpha<\alpha_c$, where $\alpha_c$ is the percolation
    threshold of a spherical Boolean model with deterministic radius
    $r_0/2$.
\end{remark}

\section{Mat\'{e}rn type I thinnings of Gibbs processes}\label{sMatern}

In this section we apply Theorem \ref{tpoapprox} to scaled Mat\'{e}rn
type I thinning of a Gibbs process $\xi$ on $\R^d\times\mathbb{Y}$. 
As in Example \ref{exBoolean} we take $\mathbb{Y}:=[0,R] \times \mathbb{Y}'$
equipped with a probability measure $\mathbb{Q}$
and the relation $\sim$ defined there.
We assume that the PI $\kappa$ satisfies (Loc1) and
(Dom2) for some $\alpha>0$. Assume that
\begin{align}\label{kappain}
	\kappa(x,r,0)=\kappa(o,r,0),\quad x \in \mathbb{R}^d,\,r \in \mathbb{Y}.
\end{align}
Condition \eqref{kappain} holds for Gibbs 
processes with pair potential (see Remark \ref{r9.8}), for the Strauss process, the area interaction process, 
the continuum random cluster model and the Widom-Rowlinson model. 

Let $c>0$.  We can find for each
$n\in\N$ and each $x \in [0,n^{1/d}]^d$ a number $u_n(x)$ such that  
\begin{align} \label{hcm1}
	\sup_{x \in [0,1]^d} |c-n\mathbb{P}(\xi(B(n^{1/d}x,u_n(n^{1/d}x))\times \BY)=0)|=0
\end{align}
for all $n$ large enough. In fact, we can choose
\begin{align*}
	u_n(x):=\sup\{u\ge 0:\BP(\xi(B(x,u)\times\BY)=0)\ge c/n\}
\end{align*}
with the convention $\sup \emptyset:=0$. Since the measure $\BE\xi(\cdot\times\BY)$ 
is absolutely continuous (it is dominated by $\alpha\lambda_d$),
the probability $\BP(\xi(B(x,u)\times\BY)=0)$ is a continuous function of $u$.
(The probability that $\xi(\cdot\times\BY)$ has mass on the boundary
of a ball vanishes.)
Hence, it
follows straight from the definition that
\eqref{hcm1} holds, provided that $c< n$.

We assert that
\begin{align}
	\limsup_{n \to \infty} \frac{1}{\log n}\sup_{x \in [0,1]^d} u_n(n^{1/d}x)^d < \infty,
	\quad \liminf_{n \to \infty} \frac{1}{\log n} \inf_{x \in [0,1]^d} u_n(n^{1/d}x)^d > 0. \label{est:un}
\end{align}
Indeed, the first relation follows from
\eqref{hardlowbou} while the second can be derived from
Theorem \ref{tempty1} as in Example \ref{expartic}.

We consider the process
\begin{align} \label{defchi} \chi_n:=\sum_{(x,r) \in \xi} \I \{x \in
	[0,n^{1/d}]^d\} \I \{(\xi-\delta_{(x,r)})(B(x,u_n(x)) \times
	\mathbb{Y})=0\} \delta_{(n^{-1/d}x,r)}.
\end{align}

\begin{theorem} \label{hardtpo} Let $\xi$ be a 
	Gibbs process on $\R^d\times [0,R] \times \mathbb{Y}'$ with PI
	$\kappa$ satisfying (Loc1) and (Dom2) for some
	$\alpha \in (0,\alpha_c)$, where $\alpha_c$ is introduced at
	\eqref{alcrit}. Assume that \eqref{kappain} holds. 
	Let $c>0$ and let
	$\nu$ be a Poisson process on $\mathbb{R}^d \times \mathbb{Y}$ with
	intensity measure
	$$
	c \kappa(o,r,0) \I\{x \in [0,1]^d\}\,\mathrm{d}x
	\,\mathbb{Q}(\mathrm{d}r).
	$$ 
Define $a:=\liminf_{n \to \infty} \frac{1}{\log n} \inf_{x \in [0,1]^d} u_n(n^{1/d}x)^d$
and let $M>0$ satisfy
\begin{align}\label{defM}
		M< \min\Big(1,2^{-d}a e^{-\alpha \kappa_d 2^d R^d} 
\int \kappa(o,r,0) \mathbb{Q}(\mathrm dr)\Big).
		\end{align}
		Then $\kr(\chi_n,\,\nu)  \le n^{-M}$ for $n \in \mathbb{N}$ large enough.
\end{theorem}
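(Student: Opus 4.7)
The plan is to apply Corollary~\ref{cpoapprox} in the scaled form provided by Remark~\ref{remf}. Set $W_n:=[0,n^{1/d}]^d$, define the scaling map $f(x,r):=(n^{-1/d}x,r)$, and $g(x,r,\mu):=\I\{\mu(B(x,u_n(x))\times\BY)=0\}$, so that $\chi_n=\Gamma_{f,W_n}(\xi)$. This $g$ is hereditary and, since $u_n(x)\le u_{\max,n}:=\sup_{x\in W_n}u_n(x)$, satisfies the locality hypothesis of Corollary~\ref{cpoapprox} with the fixed radius $u:=u_{\max,n}$; by \eqref{est:un}, $u$ is of order $(\log n)^{1/d}$. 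Let $\tilde\nu$ denote the Poisson process on $\R^d\times\BY$ with intensity $(c/n)\kappa(o,r,0)\I_{W_n}(x)\,\mathrm{d}x\,\BQ(\mathrm{d}r)$; a change of variables shows $\tilde\nu_f\stackrel{d}{=}\nu$, so it suffices to bound $\kr(\Gamma_{f,W_n}(\xi),\tilde\nu_f)$. The assumption $\alpha<\alpha_c$ supplies \eqref{csharp} via Remark~\ref{rperc}, so Corollary~\ref{cpoapprox} applies and it remains to control the four summands of \eqref{cpoa}.

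The pivotal observation is that on the event $\{g(x,r,\xi)=1\}$ the Papangelou intensity $\kappa(x,r,\xi)$ reduces to the constant $\kappa(o,r,0)$. Indeed, the event forces $\xi$ to have no point in $B(x,u_n(x))\times\BY$, and for large $n$ the lower bound in \eqref{est:un} gives $u_n(x)\ge 2R$ uniformly in $x\in W_n$; since $(x,r)\sim(y,s)$ requires $\|x-y\|\le r+s\le 2R$, the cluster $C((x,r),\xi)$ is empty, and (Loc1) together with \eqref{kappain} gives $\kappa(x,r,\xi)=\kappa(o,r,0)$. Combined with \eqref{hcm1}, this yields $\BE[g(x,r,\xi)\kappa(x,r,\xi)]=(c/n)\kappa(o,r,0)$ for every $x\in W_n$, and a matching identity for the integrand of $F_2(u,v)$ (the factor $g(x,r,\xi+\delta_{(y,s)})$ already forces $(y,s)$ outside the critical ball). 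Writing $K:=\int\kappa(o,r,0)\,\BQ(\mathrm{d}r)$, this makes the first term of \eqref{cpoa} vanish for $n$ large, and yields
\[
F_1(u,v),\;F_2(u,v)\;\le\;2^{d+1}c^2K^2\kappa_d(u+v)^d/n,
\]
while the third term is bounded by a constant multiple of $n\,u_{\max,n}^d\,e^{-c_2 v}$.

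It remains to choose $v=v_n$. Taking $v_n=\beta\log n$ with $\beta>(1+M)/c_2$ large enough bounds the third term by $\tfrac{1}{3}n^{-M}$; since $(u_{\max,n}+v_n)^d=O((\log n)^d)$, the $F_1,F_2$ contributions are then $O((\log n)^d/n)\le\tfrac{1}{3}n^{-M}$ for any $M<1$. Summing the three bounds gives $\kr(\chi_n,\nu)\le n^{-M}$ for $n$ large. The main technical obstacle is to realise the sharp admissible range $M<2^{-d}aKe^{-\alpha\kappa_d 2^dR^d}$ announced in \eqref{defM} in addition to the easy constraint $M<1$: this reflects the delicate trade-off between the volume factor $2^d(u+v)^d$ in $F_1,F_2$ and the optimal logarithmic growth of $u_n(x)^d$, which by the empty-space estimate of Theorem~\ref{tempty1} (cf.\ Example~\ref{expartic}) is of order $a\log n$ with $a\le (K\kappa_d e^{-\alpha\kappa_d 2^dR^d})^{-1}$; one must keep track of constants in the above bounds rather than absorbing them into $O(\cdot)$ notation.
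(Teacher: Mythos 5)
The overall strategy (apply Corollary \ref{cpoapprox} via Remark \ref{remf} with $W_n:=[0,n^{1/d}]^d$, $u=u_{\max,n}$, $v=v_n\asymp\log n$, the hereditary $g_n$ and the rescaling $f_n$) matches the paper, and the treatment of the first term, of $F_1$, and of the percolation term $T_3$ is essentially correct. Your observation that $g_n(x,r,\xi)=1$ together with $u_n(x)\ge 2R$, (Loc1) and \eqref{kappain} forces $\kappa(x,r,\xi)=\kappa(o,r,0)$ is also valid and even slightly sharper than what the paper uses at \eqref{chinbou}.

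However, the bound on $F_2$ has a genuine gap. You claim ``a matching identity for the integrand of $F_2$'', i.e.\ that
\[
\BE\big[g_n(x,r,\xi+\delta_{(y,s)})\kappa(x,r,\xi+\delta_{(y,s)})\,g_n(y,s,\xi+\delta_{(x,r)})\kappa(y,s,\xi)\big]
\]
equals $(c/n)^2\kappa(o,r,0)\kappa(o,s,0)$. This is false. After using (Loc1) and \eqref{kappain} to extract the factors $\kappa(o,r,0)\kappa(o,s,0)$, what remains is the joint empty-space probability $\BP\big(\xi(B(x,u_n(x))\cup B(y,u_n(y)))=0\big)$ on the set $\{\|x-y\|>u_n(x),\ \|x-y\|>u_n(y)\}$. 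This is a single expectation, not a product of two marginals: the two balls may overlap (the cutoff $\|x-y\|\le 2(u+v)$ allows $\|x-y\|$ well below $u_n(x)+u_n(y)$), and even for disjoint balls a Gibbs process carries correlations, so the event does not factor. Consequently $F_2$ does not decay like $(\log n)^d/n$.

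What the paper actually does is shift a ball of radius $u_n(y)/2$ inside $B(y,u_n(y))$ so that it is disjoint from $B(x,u_n(x))$, condition on the configuration outside the shifted ball, and apply the empty-space bound of Theorem \ref{tempty1} (as in Example \ref{expartic}) to that shifted ball. This yields
\[
\BP\big(\xi(B(x,u_n(x))\cup B(y,u_n(y)))=0\big)\le \tfrac{c}{n}\,e^{-c_0(u_n(y)/2-2R)^d},
\qquad
c_0=e^{-\alpha\kappa_d 2^d R^d}\int\kappa(o,r,0)\,\mathbb{Q}(\mathrm{d}r),
\]
and, via \eqref{est:un}, $e^{-c_0(u_n(y)/2-2R)^d}\approx n^{-2^{-d}ac_0}$. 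It is exactly this bound — not the ``volume factor versus logarithmic growth'' trade-off you mention at the end — that produces the cap $M<2^{-d}a e^{-\alpha\kappa_d 2^d R^d}\int\kappa(o,r,0)\,\mathbb{Q}(\mathrm{d}r)$ in \eqref{defM}. Your bound $F_2\le 2^{d+1}c^2K^2\kappa_d(u+v)^d/n$ would make that second cap unnecessary, which should have been a warning sign that the estimate is too strong.
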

\begin{proof}
We apply Corollary \ref{cpoapprox} with $W,u,v$ depending on $n$. Let $W_n:=[0,n^{1/d}]^d$,
		$u_n:= \sup_{x \in [0,1]^d} u_n(n^{1/d}x)$ and
		$v_n:=2c_2^{-1} \log n$, where $c_2$ is the constant from Corollary
		\ref{cpoapprox}. We let $g$ and $f$ (in Remark \ref{remf}) depend on $n$ as well. For
		$(x,r,\mu) \in \mathbb{R}^d\times \mathbb{Y} \times \mathbf{N}$ let
		\begin{align}
			g_n(x,r,\mu):= g_n(x,\mu):= \I\{\mu(B(x,u_n(x))\times \BY)=0\}
		\end{align}
		and $f_n(x,r):=(n^{-1/d}x,r)$. Obviously, $g_n$ satisfies the
	hereditary property \eqref{eher} as well as 
	\eqref{eygt1}. 
	As Remark
	\ref{rperc} shows, \eqref{csharp} holds.
	
	For all $n$ with $\inf_{x \in [0,1]^d} u_n(n^{1/d}x) >2R$ we
        find from (Loc1) and \eqref{kappain} that for all
        $B \in \mathcal{X}$,
\begin{align*}
          \mathbb{E}[\chi_n](B)&=\iint \I\{(n^{-1/d}x,r) \in B \cap  ([0,1]^d \times \mathbb{Y})\} 
\mathbb{E} [g_n(x,r,\xi)] \kappa(x,r,0)\,\mathrm{d}x \,\mathbb{Q}(\mathrm{d}r)\\
                               &= n\iint \I\{(x,r) \in B \cap ([0,1]^d \times \mathbb{Y})\} \mathbb{E} [g_n(n^{1/d}x,r,\xi)] 
\kappa(o,r,0)\,\mathrm{d}x\,\mathbb{Q}(\mathrm{d}r).
\end{align*}
Hence, for those $n$ it holds that
\begin{align*}
		\|\mathbb{E}[\chi_n]-\mathbb{E}[\nu]\|
		\le \sup_{x \in [0,1]^d} 
		|c-n\mathbb{P}(\xi(B(n^{1/d}x,u_n(n^{1/d}x))\times \BY)=0)| \int  \kappa(o,r,0)\,\mathbb{Q}(\mathrm{d}r),
	\end{align*}
	which vanishes by \eqref{hcm1}.
	
	Since we assumed that (Dom2) holds for some constant $\alpha>0$, we find from \eqref{hcm1} 
	that for all $n$ large enough,
	\begin{align}
		\mathbb{E} [g_n(x,r,\xi) \kappa(x,r,\xi)] \le 2 \alpha c n^{-1}. \label{chinbou}
	\end{align}
	Hence, the term $F_1(u_n,v_n)$ in Corollary \ref{cpoapprox} is for those $n$ bounded by
	\begin{align}
		&8(\alpha c)^2 n^{-2}\int \I\{\|x-y\|\le 2u_n+4c_2^{-1}\log n\} \I\{x,y \in [0,n^{1/d}]^d\}\,  \mathrm{d}(x,y)\nonumber\\
		&\quad  \le \frac{8(\alpha c)^2 \kappa_d (2u_n+4c_2^{-1}\log n)^d}{n}.\label{F1bou}
	\end{align}

	For $F_2(u_n,v_n)$ we obtain the bound
	\begin{align*}
		&2\alpha ^2\iint  \mathbb{P}((\xi+\delta_{(y,s)})(B(x,u_n(x))\times \mathbb{Y})=0,\,(\xi+\delta_{(x,r)})(B(y,u_n(y))\times \mathbb{Y})=0)
\nonumber\\
		& \quad \times \I\{\|x-y\|\le 2u_n+4c_2^{-1}\log n\} \I\{x,y \in [0,n^{1/d}]^d\}\,\BQ^2(\mathrm{d}(r,s))\,\mathrm{d}(x,y).
	\end{align*}
	Note that the probability in the integrand of $F_2(u_n,v_n)$ is zero if $y \in B(x,u_n(x))$. For $y \in B(x,u_n(x))^c$ it is given by
	\begin{align}
		\mathbb{P}(\xi(B(x,u_n(x))\cup B(y,u_n(y)))=0). \label{hardCtint}
	\end{align}
	Since the ball $B(y+\frac{u_n(y)}{2}\frac{y-x}{\|y-x\|}, \frac{u_n(y)}{2})$ is
	contained in $B(y,u_n(y))$, \eqref{hardCtint} is
	bounded by
\begin{align}\label{hardPP}
& \mathbb{P}\Big(\xi(B(x,u_n(x)))=0,\xi\Big(B\Big(y+\frac{u_n(y)}{2}\frac{y-x}{\|y-x\|}, 
		\frac{u_n(y)}{2}\Big)\Big)=0\Big)\\ \notag
&\quad=\mathbb{E}\Big[g_n(x,r,\xi)\, 
\mathbb{P}\Big(\xi\Big(B\Big(y+\frac{u_n(y)}{2}\frac{y-x}{\|y-x\|}, 
		\frac{u_n(y)}{2}\Big)\Big)=0\, \Bigl| \, \xi_{B(y+\frac{u_n(y)}{2}\frac{y-x}{\|y-x\|}, 
\frac{u_n(y)}{2})^c}\Big)\Big].
	\end{align}
	Note that $\|y+\frac{u_n(y)}{2}\frac{y-x}{\|y-x\|}-x\|=\|y-x\|+\frac{u_n(y)}{2}$.  Hence, the balls
	$B(y+\frac{u_n(y)}{2}\frac{y-x}{\|y-x\|}, \frac{u_n(y)}{2})$ and
	$B(x,u_n(x))$ are disjoint for $y \in B(x,u_n(x))^c$. Therefore, we obtain from
	Theorem \ref{tempty1} (applied in the same way as in
	Example \ref{expartic}) that the conditional probability in
	\eqref{hardPP} is bounded from above by $e^{-c_0(u_n(y)/2-2R)^d}$
	with $c_0=\int\kappa(o,r,0)\,\mathbb{Q}(\mathrm{d}r)\,e^{-\alpha\kappa_d 2^d R^d }$ from
	\eqref{exPartc0}. Thus we conclude that
	\begin{align*}
		F_2(u_n,v_n) \le 2\alpha^2 c\kappa_d(2 u_n+4c_2^{-1}\log n)^d \sup_{y \in [0,1]^d}e^{-c_0(u_n(n^{1/d}y)/2-2R)^d}.
	\end{align*}
Recalling the definition of the constant $a$ in Theorem  \ref{hardtpo} we can now use
\eqref{est:un} to conclude that $F_2(u_n,v_n)$ is for $n \in \mathbb{N}$ large enough bounded by
	\begin{align}
		\beta_1 (\log n)^d \exp \big\{-c_02^{-d}a
\log n\big\} \label{F2bou}
	\end{align}
for some constant $\beta_1>0$ that does not depend on $n$.  By definition of $c_0$ this is bounded  
by $n^{-M}$ for the constant $M$ from \eqref{defM}.
	
	Finally, we bound the last term on the right-hand side of \eqref{cpoa}. From \eqref{chinbou} we obtain that 
$\mathbb{E}[\chi_n(W)]\le 2\alpha c$. Hence, the last term in \eqref{cpoa} is bounded by
	\begin{align}
		2cc_1 \alpha n  u_n^d \exp(-c_2 v_n) \le \frac{\beta_2 \log n}{n} \label{F3bou}
	\end{align}
	for some constant $\beta_2>0$ not depending on $n$, where we have used \eqref{est:un} again.
	
	Now the assertion follows from \eqref{hcm1} and the bounds in
        \eqref{F1bou}, \eqref{F2bou}, \eqref{F3bou}.  
\end{proof}

\noindent
{\bf Acknowledgments:} We wish to thank Steffen Betsch for making
several useful comments. 

\end{document}